\documentclass[11pt,reqno,a4paper]{amsart}
\usepackage{yhmath,amsmath,amsfonts,amssymb,enumerate,amsthm,appendix,caption,lipsum,xparse}
\usepackage{enumitem}
\usepackage{soul}
\usepackage{mathrsfs}
\usepackage[utf8]{inputenc}
\usepackage{cmap,mathtools}
\usepackage{graphics} 
\usepackage{epsfig} 
\usepackage{graphicx}
\usepackage{epstopdf}
\usepackage[pagewise]{lineno}

\usepackage{parskip}
\usepackage{cmap,mathtools}
\usepackage{mathrsfs}
\usepackage[margin=0.9in]{geometry} 
\usepackage{cite}
\usepackage[utf8]{inputenc}
\usepackage[T1]{fontenc}

\usepackage[foot]{amsaddr}
\usepackage{xcolor}
\usepackage{cancel}
\usepackage{hyperref}
\usepackage[hyperpageref]{backref}
\hypersetup{
    colorlinks=true,
    linkcolor=myblue,
    filecolor=magenta,      
    urlcolor=mygreen,
    citecolor=mygreen,
}
\definecolor{mygreen}{rgb}{0.01,0.6,0.2}
\definecolor{myblue}{rgb}{0.01, 0.18, 1.0}
\newtheorem{theorem}{Theorem}

\newtheorem{lemma}{Lemma}

\theoremstyle{definition}
\newtheorem{definition}[theorem]{Definition}
\newtheorem{remark}[theorem]{Remark}
\newtheorem{example}[theorem]{Example}

\def\dx{\mathrm{d}x}
\numberwithin{equation}{section}
\numberwithin{theorem}{section}
\numberwithin{equation}{section}
\numberwithin{theorem}{section}
\numberwithin{lemma}{section}
\numberwithin{proposition}{section}

\subjclass{35J60, 35A15, 35B38, 35J62, 49J52,}
\keywords{$(p,N)$-Laplace; Cerami sequence; nonsmooth analysis; discontinuous nonlinearity; penalization method; concentration phenomenon}
\title[Concentration for $(p,N)$-Laplace Equation Without AR Condition]{CONCENTRATION PHENOMENA FOR $(p, N)$-LAPLACE EQUATION UNDER
DISCONTINUOUS NONLINEARITIES AND PENALIZATION METHOD}
\author[Ankit]{Ankit}
\address[Ankit]{Department of Mathematics, Indian Institute of Technology Jodhpur, Rajasthan 342030, India}
\email{p23ma0003@iitj.ac.in}
\author[GM Figueiredo]{ Giovany M. Figueiredo}
\thanks{GMF was partially supported by CNPq and FAPDF}
\address[G.M. Figueiredo]{Universidade de Brasília, Departamento de Matemática, 70910-900 Brasília DF, Brazil}
\email{giovany@unb.br}
\author[A. Sarkar]{Abhishek Sarkar$^*$}
\thanks{AS was supported by DST-INSPIRE Grant DST/INSPIRE/04/2018/002208, *Corresponding author}
\address[A. Sarkar]{Department of Mathematics, Indian Institute of Technology Jodhpur, Rajasthan 342030, India}
\email{abhisheks@iitj.ac.in}
\begin{document}
\begin{abstract}In this paper, we investigate the existence and concentration of solutions to a $(p,N)$-Laplace equation in $\mathbb{R}^N$ involving a discontinuous nonlinearity and critical exponential growth. To establish the existence of solutions, we employ a penalization technique in the sense of Del Pino and Felmer adapted to a locally Lipschitz functional. Furthermore, by combining variational methods with Moser-type iteration techniques, we obtain the concentration behavior of the solutions. Our results contribute to the study of nonlinear elliptic problems with irregular nonlinearities and critical growth phenomena.
\end{abstract}
\maketitle 
\tableofcontents
\section{Introduction}\label{Section1}
Our aim is to study the following problem 
\begin{equation}\label{main problem}
    \left\{
     \begin{aligned}
       -\epsilon^p\Delta_pu-\epsilon^N\Delta_Nu~&+V(x)(|u|^{p-2}u+|u|^{N-2}u)={H}(u-\beta)f(u)~\text{in}~ \mathbb{R}^N,\\
       \int_{\mathbb{R}^N} V(x)|u|^p\dx<&+\infty, \int_{\mathbb{R}^N} V(x)|u|^N\dx<+\infty, \text{ and } \\ u\in W^{1,p}({\mathbb{R}^N})&\cap W^{1,N}({\mathbb{R}^N}),
    \end{aligned}\tag{$\mathscr{P}_{\epsilon,\beta}$}
    \right.
\end{equation}
where $2<p<N$, $\epsilon, \beta>0, \ \Delta_{r}u=\text{div}(|\nabla u|^{r-2}\nabla u)$ for $r\in \{p,N\}$, and ${H}$ is the Heaviside function. More assumptions on $f$ and $V$ will be followed. Many authors in recent decades have focused on Schr\"odinger equation \[\iota \epsilon \frac{\partial \psi}{\partial t}= -\epsilon^2 \Delta \psi + (V(x)+E)\psi -f(\psi)\ \text{ in } \mathbb{R}^N,\] where $\epsilon>0.$ We note the corresponding steady-state problem be described as \begin{equation*}\label{2schro}
    -\epsilon^2 \Delta u + W(x) u=f(u) \text{ in } \mathbb{R}^N
\end{equation*}
which is equivalently written as under the change of variable $x \mapsto \epsilon x$:
\begin{equation*}
    -\Delta u +W(\epsilon x)u= f(u) \text{ in } \mathbb{R}^N.
\end{equation*}
In \cite{Pino-1996}, for $p=2<N$, the authors explored that the solutions of the equation concentrate on the local minimum of $V(x)$ in a bounded domain. Later, as a general case, many authors have also studied the quasilinear problem 
\begin{equation}\label{pschro}
   -\epsilon^p \Delta_p u  + W(x)|u|^{p-2}u = f(u) \text{ in } \mathbb{R}^N.
\end{equation}
In the case when $\epsilon =1$, the problem \eqref{pschro} models several steady state cases for non-Newtonian fluids and pseudo-plastic fluids. When $p=2$, i.e., in the problem \eqref{pschro} when $\epsilon \to 0$, the solutions concentrate at global minimum points of $W$ (see \cite{Wang-1993}).  In \cite{Alves-2005, Alves-2006}, the authors discussed the existence, multiplicity and concentration of positive solutions when $1<p<N$. Moreover, in \cite{Alves-2009}, the authors considered the following equation 
\begin{equation}
    -\epsilon^N \Delta_N u + W(x) |u|^{N-2} u = f(u) \text{ in } \mathbb{R}^N.
\end{equation} Where the source term arises from the Moser-Trudinger type inequality \cite{do-1997}. For continuous nonlinearities with $(p,N)$-Laplace operator, we also refer to \cite{Li-2025}. In addition to other significant works related to the concentration of solutions involving the $(p,q)$-Laplace operator, we also refer to \cite{Zhang-2022}. We also refer to \cite{Chen-2025} for a comprehensive review on Schr\"odinger equation. So far, the literature reviewed above has mainly treated the source term as continuous. 
\par Now, in the direction where the source term could exhibit discontinuous nonlinearity, we recall some notable works. In \cite{Gazzola-2000}, the authors have considered the existence of a solution for the following class of problem
\begin{align*}
    Lu +W(x)u &= f(x,u) \text{ in } \mathbb{R}^N\\ u &>0 \text{ in } \mathbb{R}^N,
\end{align*} where $L$ is a general second-order elliptic operator, $W$ is coercive and continuous, and $f$ is discontinuous function with subcritical growth. Among the other notable works, with discontinuous nonlinearities we refer to \cite{Alves-2013, Alves-2014, Alves-2002, Alves-2011, Badiale-1993, Ambrosio-2023, Vincenzo-2024,Alves-Mukherjee-2021,Kim-2018,Santos-2020, Alves-Yuan-Huang-2021, Zhang-Jia-2021, Liu-Radu-Yuan-2022, Ambrosio-D-2023, Ankit-2025} and the references therein.

Motivated by the works mentioned above, we consider the problem \eqref{main problem} and study the existence and concentration phenomena of solutions corresponding to the information available about $V$. 

The function $V:\mathbb{R}^N\rightarrow\mathbb{R}$ verifies the following conditions:
\begin{enumerate}[label={($\bf V{\arabic*}$)}]
\setcounter{enumi}{0}
\item \label{V1} $V$ is positive continuous function and $V(x)\geq V_{0}>0$, for all $x\in \mathbb{R}^N.$
\item \label{V2} There exists an open bounded set $\Lambda\subset\mathbb{R}^N$ such that
\begin{equation*}
    V_0=\underset{x\in\Lambda}{\text{inf}}~V(x)<\underset{x\in\partial\Lambda}{\text{min}}~V(x).
\end{equation*}
\end{enumerate}
Furthermore, the source-term ${f}$ satisfies following assumptions:
\begin{enumerate}[label={($\bf f{\arabic*}$)}]
\setcounter{enumi}{0}
\item\label{f1} $f$ is continuous and has critical exponential growth, i.e., there exists $\alpha_0>0$ such that
$$ \underset{|t|\rightarrow +\infty}{\lim}|f(t)|\mathrm{exp}({-\alpha|t|^\frac{N}{N-1}})=\begin{cases}
        0~\text{ if}~\alpha>\alpha_0\\
        +\infty~ \text{ if}~ \alpha<\alpha_0.
    \end{cases}$$
    \item\label{f2} There exists $\zeta>0$ such that $F(t)\geq\zeta|t|^{N+1}$ for all $t\in \mathbb{R}$, where $F(s)=\int_{0}^{s}f(t)\mathrm{d}t.$
    \item\label{f3} $\underset{t\rightarrow 0}{\limsup}\frac{f(t)}{t^{N-1}}=0.$
    \item\label{f4} The map $t\mapsto\frac{f(t)}{|t|^{N-2}t}$ is an increasing function for all $ t >0$ and decreasing for all $t <0$.
    \item\label{f5} There exists $\delta>1$ such that $\delta \mathscr{G}(t)\geq \mathscr{G}(st)$ for all $t\in \mathbb{R}$, $s\in[0,1]$ and $\mathscr{G}(s)=sf(s)-NF(s).$
\end{enumerate}
    \begin{remark}\label{rem1.1} Note that, \ref{f2} implies 
     $\underset{|t|\rightarrow \infty}{\text{lim}}\frac{F(t)}{|t|^N}=+\infty$, where $F(s)=\int_{0}^{s}f(t)\mathrm{d}t$. \end{remark}
    \begin{example} Here, we give two examples of the functions that satisfy all the conditions \ref{f1}--\ref{f5}. We define $f:\mathbb{R} \to \mathbb{R}$ as follows:
    \begin{itemize}
    \item[(i)] $f(t)=\mathrm{sgn}(t) |t|^{N-1+a} \exp\left(\alpha_0 |t|^{\frac{N}{N-1}}\right), \ a \in (0,1],\ \alpha_0 >0.$ 
   \item[(ii)] $f(t)=t|t|^{N-2}(\sqrt{t}+\Phi_1(t)),$ where $\Phi_1$ defined in \eqref{def1.5}. 
   \end{itemize}
\end{example}
{\begin{definition}[Weak Solution] We say $u\in\mathbf{X}_{\epsilon}$ (defined in section \ref{section3}) is a weak solution of problem \eqref{main problem}, if there is $\rho_0\in L^{\tilde{\Phi}_1}(\mathbb{R}^N)$(defined in section \ref{section2}) such that
\begin{equation*}
    \sum_{t\in\{p,N\}}\underset{\mathbb{R}^N}{\int}(|\nabla u|^{t-2}\nabla{u}\cdot\nabla v +V(\epsilon x)|u|^{t-2}uv)\dx-\underset{\mathbb{R}^N}{\int}\rho_0v \dx=0,\quad \forall~v\in \mathbf{X}_{\epsilon}
\end{equation*}
 and $\rho_0(x)\in[\underline{f_H}(u(x)), \overline{f_H}(u(x))]$ a.e. in $\mathbb{R}^N$ where $f_H(t)=H(t-\beta)f(t)$ and $$\underline{f_H}(t)=\underset{\delta\rightarrow0}{\lim}~\underset{|s-t|<\delta}{\mathrm{ess}\inf}~f_H(s) \text{ and }\overline{f_H}(t)=\underset{\delta\rightarrow0}{\lim}~\underset{|s-t|<\delta}{\mathrm{ess}\sup}~f_H(s),$$ with \begin{equation}\label{def1.5}
    \Phi_1(t)=\mathrm{exp}({  |t|^\frac{N}{N-1}})-\sum_{j=0}^{N-2} \frac{ |t|^\frac{Nj}{N-1}}{j!},
    \end{equation} and $\tilde{\Phi}_1$ is defined in Definition \ref{complementary}. 
\end{definition}}
Now we state our main theorem.
\begin{theorem}\label{main theorem 1}
   \textbf{(Existence and Concentration)} Assume that conditions \ref{V1}-\ref{V2} and \ref{f1}-\ref{f5} are fulfilled. Then, there exists $\tilde{\epsilon}$,$\tilde{\beta}$ such that for all $\epsilon\in(0,\tilde{\epsilon})$ and $\beta\in(0,\tilde{\beta})$, problem \eqref{main problem} possesses a weak solution $u_{\epsilon,\beta}\in \mathbf{X}_{\epsilon}.$
    Furthermore, if $x_{\epsilon,\beta}\in\mathbb{R}^N$ denotes a point at which $u_{\epsilon,\beta}$ achieves its maximum, then
    $\underset{(\epsilon,\beta)\rightarrow(0,0)}{{\lim}}V(\epsilon x_{\epsilon,\beta})=V_0.$
\end{theorem}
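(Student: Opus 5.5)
After the change of variable $x\mapsto\epsilon x$, I would work with the rescaled problem
\[
-\Delta_p u-\Delta_N u+V(\epsilon x)(|u|^{p-2}u+|u|^{N-2}u)=H(u-\beta)f(u)\quad\text{in }\mathbb{R}^N,
\]
posed on $\mathbf{X}_\epsilon$, and follow the Del Pino--Felmer scheme in the nonsmooth setting of Clarke's generalized gradient.

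\textbf{Step 1: the penalized problem.} Fix $k>1$ large and choose $a>0$ with $f(a)/a^{N-1}=V_0/k$; this is possible by \ref{f3}--\ref{f4}. Set $\tilde f(t)=f(t)$ for $t\le a$ and $\tilde f(t)=V_0t^{N-1}/k$ for $t>a$. Then define
\[
g(x,t)=\chi_\Lambda(x)H(t-\beta)f(t)+(1-\chi_\Lambda(x))H(t-\beta)\tilde f(t),
\]
and consider the locally Lipschitz functional
\[
I_{\epsilon,\beta}(u)=\sum_{t\in\{p,N\}}\frac1t\int_{\mathbb{R}^N}\bigl(|\nabla u|^t+V(\epsilon x)|u|^t\bigr)\dx-\int_{\mathbb{R}^N}G(\epsilon x,u)\dx .
\]

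\textbf{Step 2: a critical point of the penalized functional.} I would check the mountain-pass geometry using \ref{f1}, \ref{f3} and the Trudinger--Moser inequality; \ref{f2} supplies a direction along which $I_{\epsilon,\beta}$ becomes negative. The mountain-pass theorem for locally Lipschitz functionals then gives a Cerami sequence at level $c_{\epsilon,\beta}$. Boundedness of this sequence is obtained without the AR condition by a Jeanjean-type argument based on \ref{f5}. Outside $\Lambda_\epsilon$, the penalized term is absorbed by $V_0/k$. The Cerami condition follows from a tail estimate outside large balls, made possible by the penalization, together with Trudinger--Moser compactness. For that compactness I need
\[
\limsup_{\epsilon,\beta\to 0} c_{\epsilon,\beta}\le c_{V_0}<\frac1N\Bigl(\frac{\alpha_N}{\alpha_0}\Bigr)^{N-1}.
\]
The first inequality comes from testing with cutoffs of the ground state of the autonomous limit problem with $V\equiv V_0$. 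The strict bound on $c_{V_0}$ follows from \ref{f2} provided $\zeta$ is large, as is standard; I would assume the paper imposes this. Passing to the limit, with $\partial G$ controlled through $[\underline{f_H},\overline{f_H}]$, yields a critical point $u_{\epsilon,\beta}\ge0$ together with a selection $\rho_0$.

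\textbf{Step 3: concentration of maxima.} Take sequences $\epsilon_n,\beta_n\to0$. I would show that there are $y_n$ with $\int_{B_R(y_n)}u_n^N\ge\eta>0$, and that $\epsilon_ny_n\to y_0\in\overline\Lambda$ with $V(y_0)=V_0$. This is argued by contradiction via energy comparison: the translates $v_n=u_n(\cdot+y_n)$ converge to a nontrivial solution of the limit problem with potential $V(y_0)$. The limit problem is continuous because $\beta_n\to0$ and $H(t-\beta_n)f(t)\to f(t)$ pointwise; the discontinuity only has measure-zero effect, since $f(\beta_n)\to 0$. If $V(y_0)>V_0$, or if $y_0\notin\overline\Lambda$, the energy level would exceed $c_{V_0}$, contradicting Step 2. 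Strong convergence $v_n\to v$ then follows from the same comparison.

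\textbf{Step 4: $L^\infty$ decay and return to the original problem.} A Moser iteration, uniform in $n$ thanks to the strong convergence and the subcritical Trudinger--Moser control, gives $\|v_n\|_\infty\le C$ and $v_n(x)\to0$ as $|x|\to\infty$ uniformly in $n$. Hence $u_n<a$ outside $\Lambda_{\epsilon_n}$ for large $n$, so $g=f_H$ there and $u_n$ solves the original problem; rescaling back gives solutions of \eqref{main problem}. Finally, since the $v_n$ are bounded below on their maxima, by \ref{f3} and $V\ge V_0$, while they decay uniformly at infinity, the maximum points $x_n$ of $v_n$ stay in a fixed ball. Therefore $x_{\epsilon,\beta}=x_n+y_n$ satisfies $V(\epsilon x_{\epsilon,\beta})\to V(y_0)=V_0$.

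I expect the main obstacle to be the uniform Moser iteration combined with the compactness of Step 3 in the critical exponential setting. The delicate point is obtaining uniform integrability of $\exp(\alpha|v_n|^{N/(N-1)})$ from $\limsup\|v_n\|^{N/(N-1)}<\alpha_N/\alpha_0$, and doing so while handling the nonsmooth Clarke subdifferential in the limiting equation.
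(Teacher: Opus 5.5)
Your plan is correct in outline and follows the paper's own route: del Pino--Felmer penalization with $\tilde f(t)=V_0t^{N-1}/k$ beyond $a$, a nonsmooth mountain-pass/Cerami argument with boundedness from \ref{f5}, a sub-threshold energy level from \ref{f2} with $\zeta$ large, comparison with the autonomous level $c_{V_0}$, translated sequences with $\epsilon_ny_n\to y_0$ and $V(y_0)=V_0$, Moser iteration for uniform $L^\infty$ bounds and decay, and then $u_{\epsilon,\beta}<a$ on $\Lambda_\epsilon^c$ together with localization of the maxima. The differences are minor: you get $c_{\epsilon,\beta}$ below the Trudinger--Moser threshold from $\limsup c_{\epsilon,\beta}\le c_{V_0}$ (enough, since only small $\epsilon,\beta$ are needed) instead of the direct test-function estimate of Lemma~\ref{lemma3.4}, and in the boundedness step you handle $\Lambda_\epsilon^c$ separately by absorbing the penalized term with $V_0/k$, which is more accurate than Lemma~\ref{lemma 3.2}, where \ref{g2} is used for the penalized $g$ even though it fails outside $\Lambda$.
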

The present work establishes new existence and concentration results for a class of $(p,N)$-Laplace equations with discontinuous nonlinearities and critical exponential growth in $\mathbb{R}^N$. By combining Clarke's nonsmooth critical point theory with variational tools associated with the Moser--Trudinger inequality, we develop a unified analytical framework for treating mixed-growth $(p,N)$ operators under critical exponential conditions. In addition, we demonstrate that the solutions concentrate near the global minima of the potential. To the best of our knowledge, this is the first contribution addressing such a mixed-growth, discontinuous, critically exponential problem in the absence of the Ambrosetti--Rabinowitz condition.

	\medskip
    
{This type of result remains new even when replacing the $(p,N)$-Laplace operator by the pure $N$-Laplacian. In this sense, our theorem extends and complements the study carried out in \cite{Alves-2014}, as we consider a more general operator and employ new arguments that are necessary due to the absence of the Ambrosetti--Rabinowitz condition, the generality of the operator, and the presence of critical exponential growth.}

There has been a growing interest in the analysis of nonlinear partial differential equations featuring discontinuous nonlinearities, due to their significance in several free-boundary
problems in mathematical physics. Key examples of these issues are the obstacle problem, the seepage surface problem, and the Elenbaas equation. We refer to \cite{Chang-1981} and the references cited there for more such applications.

This article is organized as follows. In Section \ref{section2}, we recall some results that are crucial in our proofs. The Section \ref{section3} deals with the existence of solutions for the auxiliary problem \eqref{Auxiliary problem}. Section \ref{section4} is concerned with the study of the autonomous problem related to \eqref{main problem}. Finally, Section \ref{Section5} contains the proof of the main theorem. 

\textbf{Notations:} Throughout the paper, we will use the following notations:
       \begin{itemize}
       \item ${X}\hookrightarrow Y$ denotes the continuous embedding of $X$ into $Y$.
       \item ${X}\hookrightarrow\hookrightarrow Y$ denotes compact embedding of ${X}$ into $Y$.
       \item $o_n(1)$ denotes $o_n(1)\rightarrow0$ as $n\rightarrow+\infty$.
       \item $C_1,C_2,C_3, \cdots$ all are positive constants and may have different values at different places.
       \item $\rightharpoonup$ denotes weak convergence, $\overset{\ast}{\rightharpoonup}$ denotes weak$^{\star}$ convergence and  $\rightarrow $ denotes strong convergence.
       \item $\alpha_N=N\omega_{N-1}^\frac{1}{N-1}$, where $\omega_{N-1}$ is volume of $N-1$ dimensional unit sphere.
       \item $[ u > d]:= \{x \in \mathbb{R}^N: u(x) > d\}.$
       \item $\partial f(x)$ denote Clarke's generalized gradient of function $f$ at $x$.
        \item $\|u\|_{L^{p}}=\left(\int_{\mathbb{R}^N}|u|^p \dx\right)^\frac{1}{p},\ 1\leq p<\infty.$
       \item $B_R \subset \mathbb{R}^N$ denote the ball of radius $R>0$ centered at origin.
       \item $\mathrm{m}(S)$ denotes the $N$-dimensional Lebesgue measure for $S \subset \mathbb{R}^N.$
       \item $X^*$ denotes the dual space of $X.$
       \end{itemize}
\section{Preliminaries}\label{section2}
In this section, we recall some important existing results useful for our arguments.
\begin{definition}[Generalized Directional Derivative]
    The generalized directional derivative of $J$ at $x$ in the direction $h$ is given by
     $$J^{o}(x;h)=\limsup_{y\rightarrow x,\lambda\rightarrow 0}\frac {J (y+\lambda h)-J(y)}{\lambda}.$$ 
     The function $h\mapsto J^{0}(x,h)$ is a subadditive, continuous, and convex function. Hence, its generalized gradient in the Clark sense is given by 
      $$\partial J(x)=\{x^*\in X^*: \langle x^*,h \rangle_X \leq J^{0}(x;h),\ \forall~ h \in X\}.$$
     For each $x\in X$, $\partial J(x)$ is non empty, convex and weak*-compact subset of $X^*.$ Moreover, the function 
     \begin{equation}\label{lambda}
         \lambda(x)=\underset{x^*\in \partial J(x)}{\min}\|x^*\|_{X^*}
     \end{equation} exists and is lower semi-continuous. If $J\in C^{1}(X,\mathbb{R})$, then $\partial J(x)=\{J'(x)\}.$ A point $x_0\in X$ is called a critical point for $J$ if $0\in \partial J(x).$ For more details on this topic, we refer to the monograph \cite[Clark]{Clarke-1983}.
     \end{definition}
      \begin{definition}[Cerami Condition--Non Smooth Version \cite{Stuart-2011}] Let $X$ be a Banach space and  $\phi:X\rightarrow\mathbb{R}$ be a locally Lipschitz functional. Then one say that $\phi$ satisfies the non smooth Cerami condition at level $c\in\mathbb{R}$, denoted as $(C)_c$, if 
      every sequence $\{x_n\}\subseteq X$ such that
      \begin{equation*} \phi(x_n)\rightarrow c \text{ and } (1+\|x_n\|_{X})\lambda(x_n)\rightarrow0,
      \end{equation*}has a strongly convergent subsequence.
       \end{definition}
       
       \begin{definition}[Orlicz space]\label{2.3}
       The Orlicz space associated with $\Theta$ (an $N$-function) as
 \begin{equation*}\label{s6}
       L^{\Theta}(\mathbb{R}^N)=\left\{u\in L^{1}_{loc}(\mathbb{R}^N):\underset{\mathbb{R}^N}{\int}\Theta\left(\frac{|u|}{\lambda}\right)\dx<+\infty ~\text{for some}~\lambda>0\right\}.
      \end{equation*} \end{definition}
\noindent The space $L^{\Theta}(\mathbb{R}^N)$ is Banach  space endowed with following norm
      \begin{equation*}
      \|u\|_{\Theta}=\inf\left\{\lambda >0:\underset{\mathbb{R}^N}{\int}\Theta\left(\frac{|u|}{\lambda}\right)\dx\leq1\right\} .
      \end{equation*} 
      For more on Orlicz spaces, we refer to \cite{Rao-1991}.
      \begin{definition}\label{def2.4}
          We have $E_{\Theta}(\mathbb{R}^N)$ as a subspace of $L^{\Theta}(\mathbb{R}^N)$ in the following way, $$E_{\Theta}(\mathbb{R}^N)=\overline{\left\{u\in L^1_{loc}(\mathbb{R}^N):u \text{ having bounded support on }\mathbb{R}^N\right\}}^{\|\cdot\|_{\Theta}}.$$
          \end{definition}
    Then, equivalently, we also have 
    $$E_{\Theta}(\mathbb{R}^N)=\overline{C_c^{\infty}(\mathbb{R}^N)}^{\|\cdot\|_{\Theta}}.$$
\begin{definition}[Complementary/conjugate of a function]\label{complementary}  The Complementary function $\tilde{\Theta}$ associated with $\Theta$ is given by
         \begin{equation*}\label{s4}
     \tilde{\Theta}(s)=\underset{t\geq 0}{\sup}\{st-\Theta(t)\},\  s\geq0.
     \end{equation*} \end{definition}
     Next, we recall some lemmas without proof, which will be crucial for establishing our arguments.
\begin{lemma}[Moser-Trudinger Inequality {\cite[Lemma 1]{do-1997}}]\label{lemma2.2}
  For any $\alpha>0$, $N\geq 2$ and $u\in W^{1,N}(\mathbb{R}^N)$,
     \begin{equation*}
      \underset{\mathbb{R}^N}{\int}\Phi(\alpha|u|^\frac{N}{N-1})\dx<+\infty.
      \end{equation*}
     Moreover, if $\|\nabla u\|_{N}\leq1$ and $
     \|u\|_{N}\leq M<+\infty$ and $\alpha<\alpha_N$ there exist a constant $C>0$ such that
     \begin{equation*}
      \underset{\|\nabla u\|_N \leq1,\|u\|_N \leq M }\sup \underset{\mathbb{R}^N}{\int} \Phi\left({\alpha |u|^\frac{N}{N-1}}\right)\dx\leq C,
      \end{equation*}
      where $\Phi(t)=\mathrm{exp}({t})-\sum\limits_{j=0}^{N-2} \frac{t^j}{j!}$,~$\alpha_N=N\omega_{N-1}^\frac{1}{N-1} \text{ and } \|u\|_N=(\underset{\mathbb{R}^N}{\int}|u|^N \dx)^\frac{1}{N}.$
\end{lemma}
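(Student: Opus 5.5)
Lemma \ref{lemma2.2} is quoted from \cite[Lemma 1]{do-1997}; here is how I would reprove it. I would prove the uniform bound (the second assertion) first and then deduce the finiteness statement for arbitrary $\alpha$ by a density argument.

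\textbf{The uniform bound.} Let $u\in W^{1,N}(\mathbb{R}^N)$ with $\|\nabla u\|_N\le 1$, $\|u\|_N\le M$, and fix $\alpha<\alpha_N$. Since $\Phi(\alpha|u|^{N/(N-1)})$ depends only on $|u|$, I replace $u$ by its Schwarz symmetrization $u^*$. This preserves every integral of the form $\int G(|u|)$, in particular $\|u^*\|_N=\|u\|_N$. By the P\'olya--Szeg\H{o} inequality, $\|\nabla u^*\|_N\le\|\nabla u\|_N\le1$. So it suffices to treat nonnegative radially nonincreasing $u$.

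For such $u$, the elementary estimate
\[
\omega r^N u(r)^N\le \int_{B_r}u^N\dx\le M^N
\]
(with $\omega$ the volume of the unit ball) gives $u(r)\le C M r^{-1}$. Fix $R=R(M)$ so large that $u\le 1$ on $\mathbb{R}^N\setminus B_R$.

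\textbf{Outside $B_R$.} The series for $\Phi$ starts at the power $t^{N-1}$, so $\Phi(t)\le C t^{N-1}$ for $t\le \alpha$. Hence
\[
\int_{\mathbb{R}^N\setminus B_R}\Phi(\alpha u^{N/(N-1)})\dx\le C\int u^N\dx\le CM^N .
\]

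\textbf{Inside $B_R$.} Set $w=u-u(R)\in W^{1,N}_0(B_R)$, so that $\|\nabla w\|_{L^N(B_R)}\le1$. Choose $\eta>0$ with $(1+\eta)\alpha<\alpha_N$. The inequality
\[
(a+b)^{N/(N-1)}\le(1+\eta)a^{N/(N-1)}+C_\eta b^{N/(N-1)}
\]
together with $u(R)\le1$ gives
\[
\int_{B_R}e^{\alpha u^{N/(N-1)}}\dx\le e^{C_\eta\alpha}\int_{B_R}e^{(1+\eta)\alpha w^{N/(N-1)}}\dx .
\]
By Moser's sharp inequality on bounded domains, the right-hand side is at most $C|B_R|$. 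Since $\Phi\le e^t$, adding the two pieces gives a bound depending only on $\alpha$, $M$ and $N$.

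\textbf{The first assertion.} Let $u\in W^{1,N}(\mathbb{R}^N)$ and let $\alpha>0$ be arbitrary.
\begin{itemize}
\item \emph{The region $[|u|\le1]$.} The same bound $\Phi(t)\le C_\alpha t^{N-1}$ reduces the integral to $\int|u|^N$.
\item \emph{The region $[|u|>1]$.} This set has finite measure by Chebyshev, so it suffices to show $e^{\alpha|u|^{N/(N-1)}}\in L^1([|u|>1])$.
\item \emph{Splitting $u$.} Pick $v\in C_c^\infty(\mathbb{R}^N)$ with $\|\nabla(u-v)\|_N$ so small that $2(1+\eta)\alpha\|\nabla(u-v)\|_N^{N/(N-1)}<\alpha_N$. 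Then
\[
e^{\alpha|u|^{N/(N-1)}}\le e^{C_\eta\alpha|v|^{N/(N-1)}}\, e^{(1+\eta)\alpha|u-v|^{N/(N-1)}} .
\]
\item \emph{Cauchy--Schwarz.} The first factor is bounded because $v$ is bounded, so it is harmless on a finite-measure set. For the second factor I normalize $u-v$ by its gradient norm and apply the uniform bound just proved, with exponent $2(1+\eta)\alpha\|\nabla(u-v)\|_N^{N/(N-1)}<\alpha_N$. This controls its $\Phi$-part, and the remaining lower-order terms of the exponential are integrable on a finite-measure set.
\end{itemize}

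\textbf{Main obstacle.} The hard ingredient is the bounded-domain sharp Moser inequality together with the symmetrization step. I take both as classical. The delicate point in the argument itself is keeping all constants uniform, which is why $R$ is chosen depending only on $M$.
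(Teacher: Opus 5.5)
Your argument is correct and is essentially the standard proof from the cited source. The paper gives no proof of its own, only the reference to do Ó, who adapts Cao's two-dimensional argument: Schwarz symmetrization, the decay bound $u^*(r)\le CMr^{-1}$ to fix a radius depending only on $M$, the power-series bound $\Phi(t)\le Ct^{N-1}$ outside the ball, and Moser's sharp inequality for $u^*-u^*(R)\in W^{1,N}_0(B_R)$ inside. Your only departure is in the first assertion, where you reduce an arbitrary $\alpha$ to the subcritical uniform bound by splitting off a $C_c^\infty$ approximant; the source instead symmetrizes again and invokes Trudinger's bounded-domain integrability for every exponent, which is itself proved by exactly your splitting, so the two routes are equivalent.
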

{Here we would like to note that $\Phi_1(t)=\Phi(t^{\frac{N}{N-1}}).$}
{\begin{lemma}[\cite{Ambrosio-2024}]\label{lemma2.3}
Let $\boldsymbol{Y}$ be a Banach space and $I\in C^1(\boldsymbol{Y},\mathbb{R})$. Let $\{u_n\}_{n\geq1}\in\mathcal{N}_0$ such that $I(u_n)\rightarrow u_0.$ Then $\{u_n\}_{n\geq1} $ has a strongly convergent subsequence in $\boldsymbol{Y}$, where $\mathcal{N}_0$ denote Nehari manifold associated with functional $I,$ defined as 
     \begin{equation*}
      \mathcal{N}_{0}:=\{u\in W^{1,p}_{V_0}({\mathbb{R}^N})\cap W^{1,N}_{V_0}({\mathbb{R}^N})\setminus\{0\}:~\langle I_{V_0}^{'}(u),u\rangle=0\}.
     \end{equation*}
\end{lemma}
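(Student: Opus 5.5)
The statement as printed has to be read in its intended form. Here $I=I_{V_0}$ is the autonomous functional
\[
I_{V_0}(u)=\sum_{t\in\{p,N\}}\frac1t\int_{\mathbb{R}^N}\big(|\nabla u|^t+V_0|u|^t\big)\dx-\int_{\mathbb{R}^N}F(u)\dx ,
\]
the sequence $\{u_n\}\subset\mathcal{N}_0$ satisfies $I_{V_0}(u_n)\to c_0:=\inf_{\mathcal{N}_0}I_{V_0}$, and strong convergence holds only after suitable translations $u_n(\cdot+y_n)$. Without taking the level to be $c_0$ the claim fails, since non-minimal Palais--Smale-type sequences can split into bumps. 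Without the translations it fails too, by translation invariance. I would prove that version, relying on Lemma \ref{lemma2.2}, Lions' vanishing lemma, and the hypotheses \ref{f1}--\ref{f5}.

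\textbf{Step 1 (boundedness and a priori smallness).} On $\mathcal{N}_0$ we have $NI_{V_0}(u)=\frac{N-p}{p}\big(\|\nabla u\|_p^p+V_0\|u\|_p^p\big)+\frac1N\int\mathscr{G}(u)\dx$. Here $\mathscr{G}\ge0$ by \ref{f4}. So the $W^{1,p}$-part is bounded.

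For the $N$-part I would argue by contradiction in the style of Jeanjean, using \ref{f5}. Suppose $\|u_n\|\to\infty$ and set $w_n=u_n/\|u_n\|$.
\begin{itemize}
\item If $w_n$ vanishes in the Lions sense, then for each fixed $R$ we get $I_{V_0}(Rw_n)\ge cR^{p}-o_n(1)$. This uses \ref{f3}, \ref{f1} and Lemma \ref{lemma2.2}, which is applicable because $\|\nabla(Rw_n)\|_N$ is small after normalising. Since $u_n\in\mathcal{N}_0$, $t=1$ maximises $t\mapsto I_{V_0}(tu_n)$, by \ref{f4}. Hence $c_0+o(1)\ge I_{V_0}(Rw_n)$, which is impossible once $R$ is large.
\item If $w_n$ does not vanish, translate so that $w_n\rightharpoonup w\ne0$. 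Then $F(u_n)/\|u_n\|^N\to\infty$ on $\{w\neq0\}$ by Remark \ref{rem1.1}, and Fatou's lemma gives $I_{V_0}(u_n)/\|u_n\|^N\to-\infty$, a contradiction.
\end{itemize}
Finally, comparing with a fixed test function and using \ref{f2} (the bound $F\ge\zeta|t|^{N+1}$ with $\zeta$ large, as is standard) shows that $c_0$ lies below the threshold $\frac1N(\alpha_N/\alpha_0)^{N-1}$. This gives $\limsup\|\nabla u_n\|_N^{N/(N-1)}<\alpha_N/\alpha_0$, so Lemma \ref{lemma2.2} yields uniform bounds on $\int\Phi(\alpha q|u_n|^{N/(N-1)})$ for some $q>1$.

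\textbf{Step 2 (non-vanishing and the weak limit).} Because $u_n\in\mathcal{N}_0$, \ref{f3} together with the exponential bound gives $\|u_n\|\ge\rho>0$. If $\sup_y\int_{B_1(y)}|u_n|^N\to0$, Lions' lemma combined with the uniform Orlicz bound forces $\int f(u_n)u_n\to0$, contradicting $\|u_n\|\ge\rho$. So there are $y_n$ with $v_n:=u_n(\cdot+y_n)\rightharpoonup v\ne0$, and $v_n$ is still in $\mathcal{N}_0$ at level $\to c_0$.

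\textbf{Step 3 (identifying the limit and strong convergence) — the main obstacle.} Weak lower semicontinuity plus Fatou (with $\mathscr{G}\ge0$) gives $\langle I_{V_0}'(v),v\rangle\le0$. By \ref{f4} there is a unique $t_v\in(0,1]$ with $t_vv\in\mathcal{N}_0$. Using that $t\mapsto\mathscr{G}(t s)$ is monotone in $t$ (from \ref{f4}), we get
\[
c_0\le I_{V_0}(t_vv)\le \liminf\Big[\tfrac{N-p}{Np}\big(\|\nabla v_n\|_p^p+V_0\|v_n\|_p^p\big)+\tfrac1N\!\int\!\mathscr{G}(v_n)\Big]=c_0 .
\]
Equality throughout forces $t_v=1$. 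It also forces $\|\nabla v_n\|_p^p+V_0\|v_n\|_p^p\to\|\nabla v\|_p^p+V_0\|v\|_p^p$ and $\int\mathscr{G}(v_n)\to\int\mathscr{G}(v)$.

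The delicate part is upgrading this to convergence of the $N$-norm. From the Nehari identity, $\|\nabla v_n\|_N^N+V_0\|v_n\|_N^N-\int f(v_n)v_n$ is forced by the $p$-part to converge to the corresponding quantity for $v$. I would show $\int f(v_n)v_n\to\int f(v)v$ via a Brezis--Lieb splitting, using the uniform exponential integrability from Step 1. Convergence of both norms plus uniform convexity of $L^p$ and $L^N$ then yields $v_n\to v$ strongly.

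I expect the critical exponential growth in this last step to be the hardest point. Excluding mass escaping to infinity in $\int f(v_n)v_n$ depends entirely on keeping $c_0$ strictly below the Moser--Trudinger threshold. This is exactly where \ref{f2}, with $\zeta$ large enough, must be invoked.
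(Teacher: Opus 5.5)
The paper gives no proof of this lemma; it is imported from the literature. Your corrected reading of the statement (level $\inf_{\mathcal N_0}I_{V_0}$, compactness only up to translations) is the right one. The argument, however, has two genuine gaps.

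\textbf{First gap: the uniform Moser--Trudinger bound.} The bound asserted at the end of Step 1 is not proved: $c_0<\frac1N(\alpha_N/\alpha_0)^{N-1}$ does not imply $\limsup_n\|\nabla u_n\|_N^{N/(N-1)}<\alpha_N/\alpha_0$.
On $\mathcal N_0$ the only available identity is $I_{V_0}(u)=\frac{N-p}{Np}\|u\|_{W^{1,p}_{V_0}}^p+\frac1N\int\mathscr G(u)\dx$, and it contains no $W^{1,N}$ term. The energy only gives $\frac1N\|\nabla u_n\|_N^N\le I_{V_0}(u_n)+\int F(u_n)\dx$. In the non-vanishing case $\liminf_n\int F(u_n)\dx\ge\int F(v)\dx>0$, so this does not yield the bound.

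The Jeanjean step has the same defect. $\|\nabla(Rw_n)\|_N$ is of order $R$, not small, so Lemma \ref{lemma2.2} is unavailable for large $R$. Indeed, a normalized Moser sequence vanishes in Lions' sense while $\int F(Rw_n)\dx\to\infty$ once $R^{N/(N-1)}>\alpha_N/\alpha_0$.

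Both vanishing alternatives (Steps 1 and 2) can be rescued. Use $I_{V_0}(u_n)=\max_{s>0}I_{V_0}(su_n)\ge I_{V_0}(su_n)$ with $\|su_n\|_{W^{1,N}_{V_0}}=\sigma$ and $\sigma^{N/(N-1)}<\alpha_N/\alpha_0$. This gives $c_0\ge\sigma^N/N-o_n(1)$, which contradicts your threshold estimate. Step 3, however, relies on the unproved bound in an essential way.

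\textbf{Second gap: the sign of $\langle I_{V_0}'(v),v\rangle$ in Step 3.} Weak lower semicontinuity and Fatou do not yield $\langle I_{V_0}'(v),v\rangle\le0$. Both produce lower bounds:
$\|v\|_{W^{1,p}_{V_0}}^p+\|v\|_{W^{1,N}_{V_0}}^N\le\liminf_n\int f(v_n)v_n\dx$ and $\int f(v)v\dx\le\liminf_n\int f(v_n)v_n\dx$. These cannot be chained. Mass of $v_n$ escaping to infinity is precisely what could give $t_v>1$. Then your chain $c_0\le I_{V_0}(t_vv)\le\liminf(\cdots)$ collapses, because $t_v^p\|v\|^p_{W^{1,p}_{V_0}}$ and $\mathscr G(t_vv)$ are no longer dominated.

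The missing idea is to replace the minimizing sequence by a Palais--Smale sequence.
\begin{itemize}
\item Since $f$ is only continuous, $\mathcal N_0$ need not be a $C^1$ manifold. Use the Szulkin--Weth homeomorphism between the unit sphere and $\mathcal N_0$ (available by \ref{f4}) together with Ekeland's principle, and transfer compactness back to $\{u_n\}$ by continuity.
\item A truncation argument then gives $\nabla v_n\to\nabla v$ a.e.
\item $I_{V_0}'(v)=0$ follows from $f(v_n)\to f(v)$ in $L^1_{loc}$, which needs only $\sup_n\int f(v_n)v_n\dx<\infty$.
\item Hence $v\in\mathcal N_0$, and your Fatou argument gives $I_{V_0}(v)=c_0$ and convergence of the $W^{1,p}$ part.
\end{itemize}

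Even so, the final step $\int f(v_n)v_n\dx\to\int f(v)v\dx$ (no loss of $W^{1,N}$ mass) is exactly where the absence of (AR) bites. The combination $I_{V_0}-\frac1N\langle I_{V_0}'(\cdot),\cdot\rangle$ has no $W^{1,N}$ term to force norm convergence. This step therefore needs a separate argument that does not use a global exponential bound, for example a concentration-compactness refinement of the Moser--Trudinger inequality for sequences with nonzero weak limit. Your proposal does not provide one.
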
}
\begin{lemma}[{\cite[Proposition 2.2]{{Alves-2014}}}]\label{lemma2.4}
    Let X be a real Banach space. Suppose that $\varphi\in Lip_{loc}(X,\mathbb{R})$.
Let $\{x_n\}_{n\geq 1}\subset X $ and 
$\{\rho_n\}_{n\geq1}\subset X^*$ with $\rho_n\in\partial \varphi(x_n)$. If $x_n\rightarrow x$ in X and $\rho_n \overset{\ast}{\rightharpoonup}\rho_0$ in $X^*$, then $\rho_0\in\partial \varphi(x).$
\end{lemma}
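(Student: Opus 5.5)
The claim is the closedness of the graph of the Clarke subdifferential under strong convergence in $X$ and weak$^*$ convergence in $X^*$. I would prove it directly from the definition of $\partial\varphi$. By that definition, $\rho_0\in\partial\varphi(x)$ means
\[
\langle \rho_0,h\rangle_X\le \varphi^{o}(x;h)\quad\text{for all } h\in X.
\]
So I fix an arbitrary $h\in X$ and aim to prove this single inequality.

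Since $\rho_n\in\partial\varphi(x_n)$, we have
\[
\langle\rho_n,h\rangle_X\le\varphi^{o}(x_n;h)
\]
for every $n$. The left-hand side converges to $\langle\rho_0,h\rangle_X$ by weak$^*$ convergence, tested against the fixed vector $h$. It therefore suffices to show the upper semicontinuity estimate
\[
\limsup_{n\to\infty}\varphi^{o}(x_n;h)\le\varphi^{o}(x;h).
\]

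To get this estimate I use the definition of the generalized directional derivative as a $\limsup$. For each $n$, pick $y_n\in X$ and $\lambda_n>0$ with
\[
\|y_n-x_n\|_X<\tfrac1n,\qquad \lambda_n<\tfrac1n,\qquad \frac{\varphi(y_n+\lambda_nh)-\varphi(y_n)}{\lambda_n}>\varphi^{o}(x_n;h)-\tfrac1n.
\]
Such a choice is possible because $\varphi^{o}(x_n;h)$ is finite. Indeed, $\varphi$ is Lipschitz with some constant $K$ on a neighbourhood of $x$. Since $x_n\to x$, for $n$ large all the points involved stay in that neighbourhood, which gives the uniform bound $|\varphi^{o}(x_n;h)|\le K\|h\|_X$.

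Now $y_n\to x$ and $\lambda_n\downarrow 0$. Hence the difference quotients at $(y_n,\lambda_n)$ are admissible in the $\limsup$ defining $\varphi^{o}(x;h)$, and
\[
\limsup_{n}\varphi^{o}(x_n;h)\le \limsup_{n}\frac{\varphi(y_n+\lambda_nh)-\varphi(y_n)}{\lambda_n}\le\varphi^{o}(x;h).
\]
Combining this with the convergence of the left-hand side gives $\langle\rho_0,h\rangle_X\le\varphi^{o}(x;h)$. Since $h$ was arbitrary, $\rho_0\in\partial\varphi(x)$.

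The only delicate point is the diagonal choice of $(y_n,\lambda_n)$. It needs two things. First, $\varphi^{o}(x_n;h)$ must be finite, which is where local Lipschitz continuity near $x$ enters. Second, the convergence $x_n\to x$ must be strong, so that the points $y_n$ genuinely converge to $x$ in norm. With only weak convergence of $x_n$ this step would fail.
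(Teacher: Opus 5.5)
Your proof is correct: the paper gives no argument for this lemma (it is recalled without proof from \cite{Alves-2014}), and your route is the standard proof via upper semicontinuity of the generalized directional derivative in Clarke's theory, namely testing against a fixed $h$, passing to the limit in $\langle\rho_n,h\rangle_X$ by weak$^*$ convergence, and bounding $\limsup_n\varphi^{o}(x_n;h)\le\varphi^{o}(x;h)$ through the diagonal choice of $(y_n,\lambda_n)$. One cosmetic remark: the uniform bound $|\varphi^{o}(x_n;h)|\le K\|h\|_X$ is more than you need, since finiteness of each $\varphi^{o}(x_n;h)$ already follows from $\varphi$ being Lipschitz near $x_n$.
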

\begin{lemma}[{\cite[Lemma 2.5]{Ankit-2025}}]\label{lemma 2.5}
Let $\varsigma(t)=\max\{t,t^N \}$ and
 $\tilde{\Phi}_{1}$ be conjugate function associated with $\Phi_{1}$. Then, the following inequalities are satisfied:
\begin{itemize}
\item[(i)]  $\tilde{\Phi}_{1}(\frac{\Phi_{1}(r)}{r})\leq \Phi_{1}(r),~\forall~ r>0$.
\item[(ii)] $\tilde{\Phi}_{1}(tr)\leq \varsigma(t)\tilde{\Phi}_{1}(r), ~\forall~ t,r \geq 0.$ 
\end{itemize}
Hence, $\tilde{\Phi}_{1}\in \Delta_2$ and $E_{\tilde{\Phi}_{1}}(\mathbb{R}^N)=L^{\tilde{\Phi}{1}}(\mathbb{R}^N)$.
\end{lemma}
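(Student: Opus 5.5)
The plan is to prove both inequalities straight from the definition of the complementary function, using two properties of $\Phi_1$. Expanding the exponential gives
$$\Phi_1(r)=\sum_{j\geq N-1}\frac{r^{\frac{Nj}{N-1}}}{j!},\qquad r\ge 0.$$
So $\Phi_1$ is a convex $N$-function with $\Phi_1(0)=0$. Its derivative $\varphi_1=\Phi_1'$ is continuous and strictly increasing, with $\varphi_1(0)=0$ and $\varphi_1(r)\to\infty$. Two facts will be used. First, by convexity, $r\mapsto \Phi_1(r)/r$ is nondecreasing on $(0,\infty)$. Second, every exponent $\frac{Nj}{N-1}$ with $j\ge N-1$ is at least $N$, so termwise differentiation gives
$$r\varphi_1(r)\ \ge\ N\,\Phi_1(r)\qquad\text{for all } r\ge 0 .$$

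For (i), fix $r>0$ and set $s=\Phi_1(r)/r$. We estimate $st-\Phi_1(t)$ for every $t\ge0$.
\begin{itemize}
\item If $t\le r$, then $st-\Phi_1(t)\le st\le sr=\Phi_1(r)$.
\item If $t>r$, monotonicity of $\Phi_1(t)/t$ gives $\Phi_1(t)\ge t\,\Phi_1(r)/r=st$, so $st-\Phi_1(t)\le 0$.
\end{itemize}
Taking the supremum over $t$ yields $\tilde\Phi_1(\Phi_1(r)/r)\le \Phi_1(r)$. This step is elementary.

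For (ii), we split into $t\le1$ and $t\ge1$.

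\emph{Case $t\le 1$.} The function $\tilde\Phi_1$ is convex with $\tilde\Phi_1(0)=0$, so $\tilde\Phi_1(tr)\le t\tilde\Phi_1(r)=\varsigma(t)\tilde\Phi_1(r)$.

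\emph{Case $t\ge1$.} We need a growth bound on $\tilde\Phi_1$, obtained by duality.
\begin{itemize}
\item Let $\psi=\varphi_1^{-1}$. Then $\tilde\Phi_1(s)=\int_0^s\psi$, and Young's equality holds: $\tilde\Phi_1(\varphi_1(r))=r\varphi_1(r)-\Phi_1(r)$.
\item Put $s=\varphi_1(r)$, so $r=\psi(s)$. Combining Young's equality with $r\varphi_1(r)\ge N\Phi_1(r)$ gives
$$\tilde\Phi_1(s)\ \ge\ \Big(1-\tfrac1N\Big)r\varphi_1(r)=\tfrac{N-1}{N}\,s\,\psi(s).$$
Equivalently, $s\psi(s)\le \frac{N}{N-1}\tilde\Phi_1(s)$ for all $s\ge 0$.
\item Fix $r>0$. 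For $\tau\ge1$ this gives
$$\frac{d}{d\tau}\log\tilde\Phi_1(\tau r)=\frac{r\,\psi(\tau r)}{\tilde\Phi_1(\tau r)}\le\frac{N}{N-1}\cdot\frac1\tau .$$
\item Integrating from $1$ to $t$ gives $\tilde\Phi_1(tr)\le t^{N/(N-1)}\tilde\Phi_1(r)\le t^N\tilde\Phi_1(r)$, since $t\ge1$ and $N/(N-1)\le N$.
\end{itemize}
The case $r=0$ is trivial. The main point needing care is this duality step: one must check that $\varphi_1$ is a bijection of $[0,\infty)$, so that $\psi$ and Young's equality are available. That holds here because $\varphi_1$ is continuous, strictly increasing and unbounded.

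Finally, taking $t=2$ in (ii) gives $\tilde\Phi_1(2r)\le 2^N\tilde\Phi_1(r)$ for all $r\ge0$, so $\tilde\Phi_1\in\Delta_2$ globally. The identity $E_{\tilde\Phi_1}(\mathbb{R}^N)=L^{\tilde\Phi_1}(\mathbb{R}^N)$ is then the standard characterization of Orlicz spaces generated by $\Delta_2$ $N$-functions, for which we refer to \cite{Rao-1991}.
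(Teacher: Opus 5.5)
Your proof is correct and complete. The paper gives no proof of this lemma to compare against: it quotes it without proof from \cite{Ankit-2025}. Your argument therefore stands as a self-contained verification. Parts (i), the case $t\le 1$ of (ii), and the passage from the global $\Delta_2$ bound to $E_{\tilde\Phi_1}=L^{\tilde\Phi_1}$ are all fine. A global bound is what is needed on the infinite-measure space $\mathbb{R}^N$, and $\tilde\Phi_1(2r)\le 2^N\tilde\Phi_1(r)$ for all $r\ge 0$ provides it.

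Two remarks on your route for $t\ge 1$.

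First, you actually prove the sharper bound $\tilde\Phi_1(tr)\le t^{N/(N-1)}\tilde\Phi_1(r)$. This is the natural exponent: near $0$, $\Phi_1(r)\sim r^N/(N-1)!$, so $\tilde\Phi_1$ behaves like a multiple of $s^{N/(N-1)}$ there. The stated $\varsigma(t)=\max\{t,t^N\}$ is a weakening of your bound.

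Second, the duality step you flag as delicate can be bypassed entirely. That step needs bijectivity of $\varphi_1$, Young's equality, differentiability of $\tilde\Phi_1$, and also the unstated fact $\tilde\Phi_1(s)>0$ for $s>0$, which you need before taking logarithms. The termwise observation that gives $r\varphi_1(r)\ge N\Phi_1(r)$ also gives $\Phi_1(\lambda\sigma)\ge\lambda^N\Phi_1(\sigma)$ for every $\lambda\ge 1$, because every exponent $\frac{Nj}{N-1}$ is at least $N$. Set $\lambda=t^{1/(N-1)}$, so that $t\lambda=\lambda^N=t^{N/(N-1)}$, and substitute $s=\lambda\sigma$ in the supremum defining $\tilde\Phi_1(tr)$:
\[
\tilde\Phi_1(tr)=\sup_{\sigma\ge 0}\{t\lambda r\sigma-\Phi_1(\lambda\sigma)\}\le \lambda^N\sup_{\sigma\ge 0}\{r\sigma-\Phi_1(\sigma)\}=t^{N/(N-1)}\tilde\Phi_1(r).
\]
This gives the same sharp bound in two lines, with no inverse functions and no differentiation. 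Your log-derivative argument buys the general principle instead: a lower bound on $r\varphi_1(r)/\Phi_1(r)$ transfers to an upper bound on $s\psi(s)/\tilde\Phi_1(s)$. That principle is useful when $\Phi_1$ has no explicit series structure.
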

\begin{lemma}[{\cite[Lemma 2.6]{Ankit-2025}}]\label{2.6}
Let $E_{\Phi_{1}}(\mathbb{R}^N)$(defined in Definition \ref{def2.4}) be a subspace of an Orlicz space $L^{\Phi_{1}}(\mathbb{R}^N)$ . Then, the following embeddings hold:
\begin{itemize}
\item[(i)] $\mathbf{X}_{\epsilon}\hookrightarrow\ E_{\Phi_{1}}(\mathbb{R}^N)$, and
\item[(ii)] $E_{\Phi_{1}}(\mathbb{R}^N)\hookrightarrow L^{N}(\mathbb{R}^N).$
\end{itemize}
\end{lemma}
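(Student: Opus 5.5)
The plan is to route both embeddings through the elementary series structure of $\Phi_1$ and the Moser--Trudinger inequality of Lemma \ref{lemma2.2}. Throughout I assume, as in Section \ref{section3}, that $\mathbf{X}_{\epsilon}$ is the space of $u\in W^{1,p}(\mathbb{R}^N)\cap W^{1,N}(\mathbb{R}^N)$ with $\int_{\mathbb{R}^N}V(\epsilon x)(|u|^p+|u|^N)\dx<\infty$, normed by the sum of the corresponding $p$- and $N$-norms. Since $V\geq V_0>0$ by \ref{V1}, this gives a continuous embedding
\[
\mathbf{X}_{\epsilon}\hookrightarrow W^{1,N}(\mathbb{R}^N), \qquad \|u\|_{W^{1,N}}\leq C(V_0)\|u\|_{\mathbf{X}_\epsilon}.
\]
It therefore suffices to prove $W^{1,N}(\mathbb{R}^N)\hookrightarrow E_{\Phi_1}(\mathbb{R}^N)$.

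\emph{Step 1 (part (ii)).} Expanding the exponential gives
\[
\Phi_1(t)=\sum_{j\geq N-1}\frac{|t|^{Nj/(N-1)}}{j!}\geq \frac{|t|^N}{(N-1)!}.
\]
If $\lambda>\|u\|_{\Phi_1}$, then $\int_{\mathbb{R}^N}\Phi_1(|u|/\lambda)\dx\leq 1$, which yields $\int_{\mathbb{R}^N}|u|^N\dx\leq (N-1)!\,\lambda^N$. Letting $\lambda\downarrow\|u\|_{\Phi_1}$ gives
\[
\|u\|_{L^N}\leq ((N-1)!)^{1/N}\|u\|_{\Phi_1}.
\]
This holds on all of $L^{\Phi_1}(\mathbb{R}^N)$, and in particular on $E_{\Phi_1}(\mathbb{R}^N)$.

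\emph{Step 2 (bound on the normalized ball).} Fix $\alpha=\alpha_N/2$. Lemma \ref{lemma2.2} with $M=1$ gives a constant $C_0$ such that $\int_{\mathbb{R}^N}\Phi(\alpha|w|^{N/(N-1)})\dx\leq C_0$ whenever $\|\nabla w\|_N\leq1$ and $\|w\|_N\leq1$. The series $\Phi(t)=\sum_{j\geq N-1}t^j/j!$ starts at order $N-1$, so
\[
\Phi(st)\leq s^{N-1}\Phi(t)\quad\text{for } s\in[0,1],\ t\geq0.
\]
Using $\Phi_1(r)=\Phi(r^{N/(N-1)})$, for $\Lambda>0$ with $s:=\Lambda^{-N/(N-1)}/\alpha\leq1$ we get
\[
\int_{\mathbb{R}^N}\Phi_1\Big(\frac{|w|}{\Lambda}\Big)\dx=\int_{\mathbb{R}^N}\Phi\big(s\,\alpha|w|^{\frac{N}{N-1}}\big)\dx\leq s^{N-1}C_0.
\]
Choosing $\Lambda$ large so that $s^{N-1}C_0\leq1$ shows $\|w\|_{\Phi_1}\leq\Lambda$ for all such $w$.

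\emph{Step 3 (continuity and landing in $E_{\Phi_1}$).} For $u\neq0$, apply Step 2 to $w=u/\|u\|_{W^{1,N}}$, where $\|u\|_{W^{1,N}}\geq\max\{\|\nabla u\|_N,\|u\|_N\}$, so that $w$ lies in the normalized ball. Homogeneity of the Luxemburg norm then gives
\[
\|u\|_{\Phi_1}\leq\Lambda\|u\|_{W^{1,N}},
\]
so $W^{1,N}(\mathbb{R}^N)\hookrightarrow L^{\Phi_1}(\mathbb{R}^N)$ continuously. Since $C_c^\infty(\mathbb{R}^N)$ is dense in $W^{1,N}(\mathbb{R}^N)$, each $u$ is a $W^{1,N}$-limit of $C_c^\infty$ functions. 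These converge to $u$ in $\|\cdot\|_{\Phi_1}$ by the bound above, so $u\in\overline{C_c^\infty}^{\|\cdot\|_{\Phi_1}}=E_{\Phi_1}(\mathbb{R}^N)$. Composing with $\mathbf{X}_\epsilon\hookrightarrow W^{1,N}$ gives (i).

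The only delicate point is Step 2. The Moser--Trudinger bound is uniform only on the normalized ball and only for subcritical $\alpha$. The scaling inequality $\Phi(st)\leq s^{N-1}\Phi(t)$ is what converts the finite bound $C_0$ into the Luxemburg condition $\int\Phi_1\leq1$ with a fixed constant $\Lambda$.
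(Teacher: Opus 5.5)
Your proof is correct. The paper does not prove this lemma itself; it quotes it without proof from its Lemma 2.6 source, so there is no in-paper argument to compare against. Your route is the natural self-contained one, and each step checks out:
\begin{itemize}
\item $\mathbf{X}_{\epsilon}\hookrightarrow W^{1,N}(\mathbb{R}^N)$ follows from $V\geq V_0$.
\item The uniform bound of Lemma~\ref{lemma2.2} holds on the set $\{\|\nabla w\|_N\leq 1,\ \|w\|_N\leq 1\}$ with $\alpha=\alpha_N/2$.
\item The scaling inequality $\Phi(st)\leq s^{N-1}\Phi(t)$ for $s\in[0,1]$ turns that bound into a fixed Luxemburg radius $\Lambda$.
\item Homogeneity of the Luxemburg norm then gives $\|u\|_{\Phi_1}\leq \Lambda\|u\|_{W^{1,N}}$.
\item Density of $C_c^\infty$ in $W^{1,N}(\mathbb{R}^N)$ places $u$ in $E_{\Phi_1}(\mathbb{R}^N)$.
\item Part (ii) follows from the lowest-order term of the series, $\Phi_1(t)\geq |t|^N/(N-1)!$.
\end{itemize}

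A common alternative for the membership part uses the first assertion of Lemma~\ref{lemma2.2}. It says $\int\Phi(\alpha|u|^{N/(N-1)})\dx<\infty$ for every $\alpha>0$, that is, $\int\Phi_1(|u|/\lambda)\dx<\infty$ for every $\lambda>0$. One then invokes the classical characterization of $E_\Theta$ as exactly the functions with this property.

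Your density argument avoids needing that characterization. It is also insensitive to how Definition~\ref{def2.4} is read. As written, that definition takes the closure of all $L^1_{loc}$ functions with bounded support, rather than bounded functions with bounded support. Under either reading $C_c^\infty$ lies inside the generating set, so the $\|\cdot\|_{\Phi_1}$-closure of $C_c^\infty$ is contained in $E_{\Phi_1}(\mathbb{R}^N)$.

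Your argument also shows that the embedding constant in (i), namely $\Lambda\max\{1,V_0^{-1}\}^{1/N}$, does not depend on $\epsilon$.
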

\section{Auxiliary Problem }\label{section3}
Now, in this section, we will study the auxiliary problem instead of the \eqref{main problem}. The auxiliary problem is found under the change of variable $x \mapsto \epsilon x.$ Our motivation to study under these circumstances is based on the findings of \cite{Pino-1996} and \cite{Alves-2014}. 
\par To begin with, first, we fix $k, a, \beta$~>0 such that $\beta<a<k$ and $\frac{f(a)}{a^{N-1}}=\frac{V_0}{k},$ and we define 
$$ \tilde f(s)=\begin{cases}
        f(s)~,s<a\\
        \frac{V_0s
        ^{N-1}}{k},~s\geq a.
    \end{cases}$$
    Note that $\tilde{f} $ is a continuous function on $\mathbb{R}$. Define
    \begin{equation}\label{defg}
     g(x,t)=\chi_{\Lambda}(x)f(t)+(1-\chi_{\Lambda}(x))\tilde{f}(t).
    \end{equation}
    One can easily observe that the function $g:\mathbb{R}^N\times\mathbb{R}\rightarrow\mathbb{R}$ is Carath\'eodory function such that it satisfy following properties:
    \begin{enumerate}[label={($\bf g{\arabic*}$)}]
\setcounter{enumi}{0}
\item\label{g1} $g(x,t)$ also having critical exponential growth i.e. there exist $\alpha_0>0$ such that
$$ \underset{|t|\rightarrow +\infty}{\lim}|g(x,t)|\mathrm{exp}({-\alpha|t|^\frac{N}{N-1}})=\begin{cases}
        0~\text{ if}~\alpha>\alpha_0\\
        +\infty~ \text{ if}~ \alpha<\alpha_0
    \end{cases}\text{ uniformly} ~\text{a.e.~in}~x\in \mathbb{R}^N.$$
    \item\label{g2} There exists $\zeta>0$ such that $G(x,t)\geq\zeta|t|^{N+1}$, for all $ t\in \mathbb{R}$ and for all $x\in \mathbb{R}^N$, where $G(x,t)=\int_{0}^{t}g(x,s)\mathrm{d}s.$
    \item\label{g3} $\underset{t\rightarrow0}{{\limsup}}\frac{g(x,t)}{t^{N-1}}=0.$
    \item\label{g4} The map $s\mapsto\frac{g(x,t)}{|t|^{N-2}t}$ is an increasing function for all $t>$0 and decreasing for all $ t <0$.
    \item\label{g5} There exists $\delta>1$ such that $\delta \mathscr{G}(x,t)\geq \mathscr{G}(x,st)$, for all $t\in \mathbb{R}$ , $s\in[0,1]$, where $\mathscr{G}(x,t)=tf(x,t)-NG(x,t).$
\end{enumerate}
\begin{remark}\label{remark3.1}
    We note that \ref{g2} implies $\underset{|t|\rightarrow \infty}{{\lim}}\frac{G(x,t)}{|t|^N}=+\infty$ $\text{ uniformly} ~\text{a.e.~in}~x\in \mathbb{R}^N$, where $G(x,t)=\int_{0}^{t}g(x,s)\mathrm{d}s.$
\end{remark}
    So the auxiliary problem is reduced to the following problem under the transformation $x\mapsto\epsilon x$
    \begin{equation}\label{Auxiliary problem}
    \left\{
     \begin{aligned}
       -\Delta_pu-\Delta_Nu~&+V(\epsilon x)(|u|^{p-2}u+|u|^{N-2}u)=H(u-\beta)g(\epsilon x,u)~~ \text{in}~~ \mathbb{R}^N,\\
       \int_{\mathbb{R}^N} V(\epsilon x)|u|^p\dx<&+\infty~, \int_{\mathbb{R}^N} V(\epsilon x)|u|^N\dx<+\infty,\text{ and}\\ u &\in W^{1,p}_{V_{\epsilon}}({\mathbb{R}^N})\cap W^{1,N}_{V_{\epsilon}}({\mathbb{R}^N}),
    \end{aligned}\tag{$\mathscr{P}_{\epsilon,\beta}^{a}$}
    \right.
\end{equation}
where $V_{\epsilon}(x):=V(\epsilon x)$. For the auxiliary problem, our working space will be 
$$\mathbf{X}_{\epsilon}=W^{1,p}_{V_{\epsilon}}({\mathbb{R}^N})\cap W^{1,N}_{V_{\epsilon}}({\mathbb{R}^N})$$
equipped with the norm
$$\|u\|_{\mathbf{X}_{\epsilon}}=\|u\|_{W^{1,p}_{V_{\epsilon}}}+\|u\|_{W^{1,N}_{V_{\epsilon}}},$$ where $$\|u\|_{W^{1,r}_{V_{\epsilon}}}=\bigg(\int_{\mathbb{R}^{N}}|\nabla u|^r~\mathrm{d}x+\int_{\mathbb{R}^{N}}V(\epsilon x)|u|^r~\mathrm{d}x\bigg)^{\frac{1}{r}}~\text{for}~r\in\{p,N\}.$$ 
We say that $u\in\mathbf{X}_{\epsilon}$ is weak solution to \eqref{Auxiliary problem}, if there is $\rho_0\in L^{\Phi_1}(\mathbb{R}^N)$ such that 
\begin{equation*}
    \sum_{t\in\{p,N\}}\underset{\mathbb{R}^N}{\int}(|\nabla u|^{t-2}\nabla{u}\cdot\nabla v +V(\epsilon x)|u|^{t-2}uv)\dx-\underset{\mathbb{R}^N}{\int}\rho_0v\dx=0,\quad \forall~v\in \mathbf{X}_{\epsilon}.
\end{equation*}
 and $\rho_0(x)\in[\underline{g_H}(\epsilon x,u(x)), \overline{g_H}(\epsilon x,u(x))]$ a.e. in $\mathbb{R}^N$ with $g_H(x,t)=H(t-\beta)g( x,t)$, where $$\underline{g_H}(x,t)=\underset{\delta\rightarrow0}{\lim}~\underset{|s-t|<\delta}{\mathrm{ess}\inf}~g_H(x,s) \text{ and }\overline{g_H}(x,t)=\underset{\delta\rightarrow0}{\lim}~\underset{|s-t|<\delta}{\mathrm{ess}\sup}~g_H(x,s). $$
 \begin{remark}\label{remark1}
     If $u\in\mathbf{X}_{\epsilon}$ is weak solution of \eqref{Auxiliary problem} with $u(x)<a$ for all $x\in\Lambda_\epsilon^c$, where $\Lambda_{\epsilon}=\{x\in\mathbb{R}^N:\epsilon x\in\Lambda\}$, then $u$ is solution to \eqref{main problem}.
 \end{remark}
 \subsection{Mountain Pass Geometry of the Energy Functional}
Define the functional $I_{\epsilon,\beta}:\mathbf{X}_{\epsilon}\rightarrow\mathbb{R}$
\begin{equation}\label{eqn3.0}
    I_{\epsilon,\beta}(u)=\frac{1}{p}\underset {\mathbb{R}^N}{\int}(|\nabla u|^p+V(\epsilon x)|u|^p)\dx+\frac{1}{N}\underset {\mathbb{R}^N}{\int}(|\nabla u|^N+V(\epsilon x)|u|^N)\dx-\underset {\mathbb{R}^N}{\int}G_{H}(\epsilon x,u)\dx,
\end{equation}
where $G_{H}( x,t)={\int}_{0}^{t}H(s-\beta)g(x,s)\mathrm{d}s.$\\
Next, we will consider $$I_{\epsilon,\beta}(u)=Q_\epsilon(u)-\Upsilon_{\epsilon,\beta}(u),$$
where $$Q_\epsilon(u)=\frac{1}{p}\underset {\mathbb{R}^N}{\int}(|\nabla u|^p+V(\epsilon x)|u|^p)\dx+\frac{1}{N}\underset {\mathbb{R}^N}{\int}(|\nabla u|^N+V(\epsilon x)|u|^N)\dx$$
and $$\Upsilon_{\epsilon,\beta}(u)= \underset {\mathbb{R}^N}{\int}G_{H}(\epsilon x,u)\dx.$$
\par In the next Lemma, we discuss the mountain pass geometry for the energy functional. 
\begin{lemma}[Mountain Pass Geometry]\label{lemma 3.1}
Assume that \ref{g1}-\ref{g3} hold. Then
\begin{itemize}
\item[(i)] $I_{\epsilon,\beta}(0)=0$ and the functional $I_{\epsilon,\beta}\in Lip_{loc}(\mathbf{X}_{\epsilon}, \mathbb{R}).$
    \item[(ii)] there exist $\varrho>0$ and $r>0$ such that $I_{\epsilon,\beta}(u)\geq r$ for all $\|u\|_{\mathbf{X}_{\epsilon}}=\varrho.$
    \item[(iii)] there is $\vartheta\in\mathbf{X}_{\epsilon}$ such that $I_{\epsilon,\beta}(\vartheta)<0.$
    \end{itemize} \end{lemma}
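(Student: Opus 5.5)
The plan is to treat the three items separately, using a single growth estimate for $G_H$ built from \ref{g1} and \ref{g3}.

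\emph{Growth bound.} Fix $\alpha>\alpha_0$ and $q>N$. By \ref{g1} and \ref{g3}, for every $\eta>0$ there is $C_\eta>0$ with
\begin{equation*}
|g(x,t)|\le \eta |t|^{N-1}+C_\eta |t|^{q-1}\Phi(\alpha|t|^{\frac{N}{N-1}}) \quad \text{for all } t\in\mathbb{R} \text{ and a.e. } x.
\end{equation*}
Since $0\le H\le 1$, integrating gives $|G_H(x,t)|\le \frac{\eta}{N}|t|^N+C_\eta|t|^q\Phi(\alpha|t|^{\frac{N}{N-1}})$.

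\emph{Item (i).} $I_{\epsilon,\beta}(0)=0$ holds because $G_H(x,0)=0$. The part $Q_\epsilon$ is $C^1$ on $\mathbf{X}_\epsilon$, so it is enough to show that $\Upsilon_{\epsilon,\beta}$ is locally Lipschitz. For $u,v$ in a ball of $\mathbf{X}_\epsilon$ the mean value theorem gives
\begin{equation*}
|G_H(\epsilon x,u)-G_H(\epsilon x,v)|\le \sup_{\theta\in[0,1]}|g(\epsilon x,\theta u+(1-\theta)v)|\,|u-v|.
\end{equation*}
I bound the supremum with the growth estimate and then apply Hölder. The exponential factor is controlled uniformly on bounded sets of $W^{1,N}$ by Lemma \ref{lemma2.2}: after a standard splitting $u=u_0+w$ with $u_0$ smooth with compact support and $\|w\|$ small, the quantity $\int\Phi(s\alpha |u|^{N/(N-1)})$ is bounded locally uniformly. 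The polynomial factors are controlled by the Sobolev embedding $W^{1,N}\hookrightarrow L^r$ for $r\ge N$. Alternatively, I would use Lemma \ref{2.6} together with the Orlicz Hölder inequality between $L^{\Phi_1}$ and $L^{\tilde\Phi_1}$ from Lemma \ref{lemma 2.5}. Either route gives $|\Upsilon(u)-\Upsilon(v)|\le C(R)\|u-v\|_{\mathbf{X}_\epsilon}$.

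\emph{Item (ii).} This is the main obstacle, because the exponential term must be controlled on a small sphere. Take $\|u\|_{\mathbf{X}_\epsilon}=\varrho$ small, so that $\|u\|_{W^{1,N}_{V_\epsilon}}\le\varrho$ and $\|u\|_{W^{1,p}_{V_\epsilon}}\le\varrho\le 1$.
\begin{itemize}
\item Applying Hölder with exponents $2$ and $2$, then Lemma \ref{lemma2.2} to $u/\|\nabla u\|_N$ with $2\alpha\varrho^{N/(N-1)}<\alpha_N$, gives $\int|u|^q\Phi(\alpha|u|^{N/(N-1)})\le C\|u\|_{2q}^q\le C\varrho^q$. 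Here $\|u\|_N\le V_0^{-1/N}\varrho$, and I use the convexity fact $\Phi(t)^2\le\Phi(2t)$.
\item With $\eta<V_0$ small, this yields
\begin{equation*}
I_{\epsilon,\beta}(u)\ge \tfrac1p\|u\|_{W^{1,p}_{V_\epsilon}}^p+\tfrac1N\bigl(1-\tfrac{\eta}{V_0}\bigr)\|u\|^N_{W^{1,N}_{V_\epsilon}}-C\varrho^q.
\end{equation*}
\item Since $p<N$ and both norms are at most $1$, $\frac1p a^p+\frac{c}{N}b^N\ge c'(a+b)^N=c'\varrho^N$. Because $q>N$, taking $\varrho$ small gives $I\ge r>0$.
\end{itemize}

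\emph{Item (iii).} Pick $\varphi\in C_c^\infty$ with $\varphi\ge0$, $\varphi\not\equiv0$, and set $\Omega=\{\varphi>0\}$. For $t$ large, $t\varphi>\beta$ on most of $\Omega$, and
\begin{equation*}
G_H(x,t\varphi)=\int_\beta^{t\varphi}g\ge G(x,t\varphi)-G(x,\beta)\ge \zeta t^{N+1}\varphi^{N+1}-C_\beta,
\end{equation*}
using \ref{g2} together with $|G(x,\beta)|\le C_\beta$. Here $\max_{|s|\le\beta}|g|$ is finite because $g$ is bounded on bounded sets uniformly in $x$, since both $f$ and $\tilde f$ are continuous. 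Hence
\begin{equation*}
I_{\epsilon,\beta}(t\varphi)\le C_1t^p+C_2t^N-\zeta t^{N+1}\int\varphi^{N+1}+C_\beta\,\mathrm{m}(\Omega)\to-\infty,
\end{equation*}
and $\vartheta=t\varphi$ with $t$ large works.
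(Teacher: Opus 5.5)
Your proposal is correct and follows essentially the paper's route. For (ii) you use the same growth bound, a Moser--Trudinger estimate on a small sphere, and the inequality $a^p\ge a^N$ for norms at most $1$. You carry out by hand the exponential bound that the paper cites from do \'O and Marcos, and you track the factor $V_0$ explicitly. For (iii) you use the same ray argument. For (i) you sketch directly what the paper simply quotes from Lemma 3.1 of the earlier work of Ankit et al. Your explicit use of $G_H(x,t)=G(x,t)-G(x,\beta)$ for $t\ge\beta$ is also cleaner than the paper, which silently replaces $G_H$ by $G$.

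Two small repairs are needed.

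In (ii), apply Lemma~\ref{lemma2.2} to $u/\|u\|_{W^{1,N}}$, with $2\alpha\|u\|_{W^{1,N}}^{N/(N-1)}<\alpha_N$, rather than to $u/\|\nabla u\|_N$. The lemma requires $\|v\|_N\le M$, and on the sphere the ratio $\|u\|_N/\|\nabla u\|_N$ is not controlled. By contrast, $\|u\|_{W^{1,N}}\le \max\{1,V_0^{-1/N}\}\varrho$ is.

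In (iii), take $\mathrm{supp}\,\varphi\subset\Lambda_\epsilon$, as the paper does. Outside $\Lambda$ the penalized nonlinearity is $\tilde f(t)=\frac{V_0}{k}t^{N-1}$ for $t\ge a$. So $G(x,t)$ grows only like $t^N$ there, and \ref{g2} actually fails, even though the lemma lists it as a hypothesis. Localizing in $\Lambda_\epsilon$, where $g=f$, makes your argument independent of this. Also, on $\{0<t\varphi<\beta\}$ you have $G_H=0$, so your pointwise lower bound needs $C_\beta\ge\zeta\beta^{N+1}$; this is harmless.
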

    
     \begin{proof} (i) $I_{\epsilon,\beta}(0)=0$ and second subpart follows from Lemma $3.1$ of \cite{Ankit-2025}.\\
(ii) From \ref{g1} and \ref{g3}, it follows that for any $\tau>0$ and $\nu>N$, there exist a constant $C_1(\tau)$ (depends only on $\tau$) and $\alpha_0>0$ such that 
\begin{equation*}
    |g(x,t)|\leq \tau|t|^{N-1}+C_1(\tau)|t|^{\nu-1}\Phi_{\alpha_0,N-2}(t),
\end{equation*}
{and therefore, we get
\begin{equation}\label{eqn3.1}
    |G(x,t)|\leq \frac{\tau}{N}|t|^{N}+C_1(\tau)|t|^{\nu}\Phi_{\alpha_0,N-2}(t),
\end{equation}
where \begin{equation*}
    \Phi_{\alpha,N-2}(t)=\mathrm{exp}(\alpha|t|^\frac{N}{N-1})-\sum\limits_{j=0}^{N-2} \frac{(\alpha |t|^\frac{N}{N-1})^j}{j!}.
\end{equation*} }
Using \eqref{eqn3.1}, we have
\begin{equation}\label{eqn3.2}
\underset {\mathbb{R}^N}{\int}G_{H}(\epsilon x,u)\dx\leq\underset {\mathbb{R}^N}{\int}G(\epsilon x,u)\dx\leq\frac{\tau}{N}\underset {\mathbb{R}^N}{\int}|u|^N \dx+C_1(\tau)\underset {\mathbb{R}^N}{\int}|u|^\nu \Phi_{\alpha_0,N-2}(u)\dx.
\end{equation}
Let $ \kappa\in(0,1)$ such that $\|u\|_{\mathbf{X}_{\epsilon}}\leq \kappa $. Choose $\alpha>\alpha_0$ close to $\alpha_0$ such that $\alpha\|u\|_{\mathbf{X_{\epsilon}}}^{\frac{N}{N-1}}<\alpha_N.$  So using \cite[Lemma 2.3]{Marcos-2009} and Sobolev embedding, we obtain
\begin{equation}\label{eqn3.3}
C_1(\tau)\underset {\mathbb{R}^N}{\int}|u|^\nu \Phi_{\alpha_0,N-2}(u)\dx\leq\tilde{C}\|u\|_{\mathbf{X}_{\epsilon}}^{\nu}.
\end{equation}
From \eqref{eqn3.2} and \eqref{eqn3.3}, we get 
\begin{equation}\label{eqn3.4}
\underset {\mathbb{R}^N}{\int}G_{H}(\epsilon x,u)\dx\leq\frac{\tau}{N}\|u\|_{{\mathbf{X}_{\epsilon}}}^N+C_1(\tau)\|u\|_{{\mathbf{X}_{\epsilon}}}^{\nu}.
\end{equation}
Finally, combining \eqref{eqn3.0} and \eqref{eqn3.4}, we have
\begin{align*}
I_{\epsilon,\beta}(u)&\geq\frac{1}{p}\|u\|_{W^{1,p}_{V_{\epsilon}}}^p+\frac{1}{N}\|u\|_{W^{1,N}_{V_{\epsilon}}}^N-\frac{\tau}{N}\|u\|_{{\mathbf{X}_{\epsilon}}}^N-C_1(\tau)\|u\|_{{\mathbf{X}_{\epsilon}}}^{\nu}\\&
\geq \frac{\left(\|u\|_{W_{V_{\epsilon}}^{1,p}}^p+\|u\|_{W_{V_\epsilon}^{1,N}}^N\right)}{N}-\frac{\tau}{N}\|u\|_{{\mathbf{X}_{\epsilon}}}^N-C_1(\tau)\|u\|_{{\mathbf{X}_{\epsilon}}}^{\nu}\\&
\geq\frac{2^{1-N}}{N} \left(\|u\|_{W_{V_\epsilon}^{1,p}}+\|u\|_{W_{V_\epsilon}^{1,N}}\right)^N-\frac{\tau}{N}\|u\|_{{\mathbf{X}_{\epsilon}}}^N-C_1(\tau)\|u\|_{{\mathbf{X}_{\epsilon}}}^{\nu}\\&
=\frac{2^{1-N}}{N}\|u\|_{\mathbf{X}_{\epsilon}}^N-\frac{\tau}{N}\|u\|_{{\mathbf{X}_{\epsilon}}}^N-C_1(\tau)\|u\|_{{\mathbf{X}_{\epsilon}}}^{\nu}\\&
=\left( \frac{2^{1-N}}{N}-\frac{\tau}{N}\right)\|u\|_{\mathbf{X}_{\epsilon}}^N-C_1(\tau)\|u\|_{{\mathbf{X}_{\epsilon}}}^{\nu}.
\end{align*}
Choose $\tau=\frac{1}{2^N}$, then
\begin{align*}
I_{\epsilon,\beta}(u)&\geq\frac{1}{N2^N}\|u\|_{{\mathbf{X}_{\epsilon}}}^N-C_1(\tau)\|u\|_{{\mathbf{X}_{\epsilon}}}^{\nu}
\geq\frac{1}{N2^{N+1}}\|u\|_{{\mathbf{X}_{\epsilon}}}^N-C_1(\tau)\|u\|_{{\mathbf{X}_{\epsilon}}}^{\nu}.
\end{align*}
Using the basic calculus, we obtain that $ \frac{1}{N2^{N+1}}\|u\|_{{\mathbf{X}_{\epsilon}}}^N-C_1(\tau)\|u\|_{{\mathbf{X}_{\epsilon}}}^{\nu}$ attain its maxima at some point say $\varrho\in(0,\kappa].$ So for all $u\in\mathbf{X}_{\epsilon}$ such that $\|u\|_{\mathbf{X}_{\epsilon}}=\varrho$, we have
\begin{equation*}
    I_{\epsilon,\beta}(u)\geq r, ~\forall~\|u\|_{\mathbf{X}_{\epsilon}}=\varrho.
\end{equation*}
(iii) Define $\Lambda_{\epsilon}=\{x\in\mathbb{R}^N:\epsilon x\in\Lambda\}$. Choose $\eta\in C_{c}^{\infty}(\mathbb{R}^N)\setminus\{0\}$ such that $\text{supp}(\eta)\subset\Lambda_{\epsilon}.$ For $t>0$, we set $u=t\eta$, and then we obtain
\begin{equation*}
   I_{\epsilon,\beta}(t\eta) =\frac{t^p}{p}\|\eta\|_{W^{1,p}_{V_{\epsilon}}}^p+\frac{t^N}{N}\|\eta\|_{W^{1,N}_{V_{\epsilon}}}^N-\underset {\mathbb{R}^N\cap~\mathrm{supp}(\eta)}{\int}G_{H}(\epsilon x,t\eta)\dx.
\end{equation*} 
Due to \ref{g2} (Remark \ref{remark3.1}), for any $\tilde{M}>0$ there is $C_ {\tilde{M}}>0$ such that
\begin{equation*}
    G(x,t)\geq \tilde{M}|t|^N-C_{\tilde{M}},~\forall~(x,t)\in\mathbb{R}^N\times \mathbb{R}.
\end{equation*}
Therefore, we get the following estimate
\begin{equation*}
  I_{\epsilon,\beta}(t\eta)\leq \frac{t^p}{p}\|\eta\|_{W^{1,p}_{V_{\epsilon}}}^p+\frac{t^N}{N}\|\eta\|_{W^{1,N}_{V_{\epsilon}}}^N-\tilde{M}t^N\underset{\mathbb{R}^N\cap\text{supp}(\eta)}{\int}\eta^N \dx+C_{\tilde{M}}m(\text{supp}(\eta)\cap\mathbb{R}).
\end{equation*}
Choose $\tilde{M}>0 $ such that $ \frac{1}{N}\|\eta\|_{W^{1,N}_{V_{\epsilon}}}^N-\tilde{M}\underset{\mathbb{R}^N\cap\text{supp}(\eta)}{\int}\eta^Ndx<0$. Now for large enough $t$, we have $I_{\epsilon,\beta}(t\eta) \rightarrow -\infty$ and hence the proof of (iii) follows.
\end{proof}
\subsection{Compactness of the Energy Functional--Cerami Condition} {In this subsection, we discuss the compactness of the energy functional $I_{\epsilon,\beta}$. Before we begin, we first recall a lemma that ensures the existence of a Cerami sequence.
\begin{lemma} [{\cite[Theorem 3.1]{Livrea-2009}}] \label{lemma 2.1}
Let X be a real Banach space and $\phi:X\rightarrow\mathbb{R}$ be a locally Lipschitz functional. Suppose we have 
$$\max\{\phi(0),\phi(x_1)\}<\underset{\|x\|_{X}\leq \rho}{\inf}\phi(x)$$
for some $x_1\in X$ with $\|x_1\|_{X}>\rho.$ Let 
$$c=\underset{\gamma\in \Gamma}{\inf}\underset{t\in [0,1]}{\max}J(\gamma(t)),$$
where $\Gamma$ is the collection of paths joining $0$ and $x_1$. Then there is a non-smooth Cerami sequence for the functional $\phi.$ 
\end{lemma}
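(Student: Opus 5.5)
This is the nonsmooth Cerami-type mountain pass theorem (the version in \cite{Livrea-2009}), and I would prove it by applying Ekeland's variational principle on a complete space of paths. Set $\Gamma=\{\gamma\in C([0,1],X):\gamma(0)=0,\ \gamma(1)=x_1\}$ with the sup metric, which is complete. Define $\Psi(\gamma)=\max_{t\in[0,1]}\phi(\gamma(t))$. This $\Psi$ is lower semicontinuous, and in fact continuous, because $\phi$ is locally Lipschitz and $\gamma([0,1])$ is compact. Every path in $\Gamma$ meets the sphere $\|x\|_X=\rho$, since $\|x_1\|_X>\rho$. Hence
\[
c\ \geq\ \inf_{\|x\|_X\le\rho}\phi\ >\ \max\{\phi(0),\phi(x_1)\}.
\]
This strict gap keeps the endpoints out of the almost-maximal set.

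To obtain the Cerami weight $(1+\|x\|_X)$ rather than the plain Palais--Smale one, I would not use the sup metric directly. Following Cerami's idea, I would equip $\Gamma$ with the weighted distance induced by the metric on $X$
\[
\delta(x,y)=\inf\Big\{\int_0^1\frac{\|h'(s)\|_X}{1+\|h(s)\|_X}\,ds:\ h \text{ a } C^1 \text{ path from } x \text{ to } y\Big\},
\]
taking the uniform version $d(\gamma_1,\gamma_2)=\max_t\delta(\gamma_1(t),\gamma_2(t))$. This distance is complete and locally equivalent to the norm. For $\varepsilon_n\to0$, pick $\gamma_n$ with $\Psi(\gamma_n)\le c+\varepsilon_n^2$. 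Ekeland's principle then gives $\tilde\gamma_n$ with $\Psi(\tilde\gamma_n)\le\Psi(\gamma_n)$, $d(\tilde\gamma_n,\gamma_n)\le\varepsilon_n$, and
\[
\Psi(\gamma)\ \ge\ \Psi(\tilde\gamma_n)-\varepsilon_n\, d(\gamma,\tilde\gamma_n)\qquad\text{for all }\gamma\in\Gamma .
\]

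The key step is to show that some $t_n$ in $M_n=\{t:\phi(\tilde\gamma_n(t))=\Psi(\tilde\gamma_n)\}$ satisfies $(1+\|\tilde\gamma_n(t_n)\|_X)\lambda(\tilde\gamma_n(t_n))\le C\varepsilon_n$. I would argue by contradiction, assuming $(1+\|\tilde\gamma_n(t)\|_X)\lambda(\tilde\gamma_n(t))>C\varepsilon_n$ for every $t\in M_n$. Since $\lambda$ from \eqref{lambda} is lower semicontinuous and $M_n$ is compact, this inequality persists on a neighbourhood $U$ of $M_n$. By the strict gap above, $U$ can be chosen to avoid $t=0,1$.

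On $U$ I would build a locally Lipschitz pseudo-gradient. For each point $x$ in the compact set $\tilde\gamma_n(\overline U)$, the Hahn--Banach theorem together with weak$^*$ compactness and convexity of $\partial\phi(x)$ gives a unit vector $h_x$ with $\phi^0(x;h_x)=\max_{x^*\in\partial\phi(x)}\langle x^*,h_x\rangle\le-\lambda(x)$. Upper semicontinuity of $\phi^0$ makes this estimate, with $\lambda(x)/2$, stable in a neighbourhood. A finite partition of unity over the compact image then yields a continuous $v(t)$ with $\|v(t)\|_X\le 1$ and, by Lebourg's mean value theorem, a decrease $\phi(y+sv)\le\phi(y)-s\lambda/2$ for $y$ near $\tilde\gamma_n(t)$ and small $s$.

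Now deform $\gamma=\tilde\gamma_n+s\,\theta(t)(1+\|\tilde\gamma_n(t)\|_X)\,v(t)$, where $\theta$ is a cutoff equal to $1$ on $M_n$ and supported in $U$. The endpoints are unchanged, and $d(\gamma,\tilde\gamma_n)\le s+o(s)$ because the weight $1/(1+\|\cdot\|)$ cancels the factor $(1+\|\tilde\gamma_n(t)\|_X)$. On $U$ the value of $\phi$ drops by at least $s\,C\varepsilon_n/2$. Off $U$, for small $s$, $\phi$ stays below $\Psi(\tilde\gamma_n)-\eta$ by compactness. So $\Psi(\gamma)\le\Psi(\tilde\gamma_n)-sC\varepsilon_n/2$, which contradicts the Ekeland inequality once $C>2$.

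Setting $x_n=\tilde\gamma_n(t_n)$ then gives $\phi(x_n)\to c$ and $(1+\|x_n\|_X)\lambda(x_n)\to0$, so $\{x_n\}$ is a non-smooth Cerami sequence at level $c$.

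I expect the main obstacle to be the pseudo-gradient construction. It has to hold uniformly along a compact piece of a path, using only the upper semicontinuity of $\phi^0$ and the lower semicontinuity of $\lambda$. It also has to be reconciled with the weighted metric, because that metric is exactly what turns the Palais--Smale bound $\lambda(x_n)\to 0$ into the Cerami bound $(1+\|x_n\|_X)\lambda(x_n)\to 0$.
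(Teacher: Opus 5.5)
The paper gives no proof of this lemma. It is imported from \cite{Livrea-2009}, where it is derived from a general min-max principle under a Cerami-type condition. Your argument is a correct, self-contained proof for the mountain-pass class. It applies Ekeland's principle on $(\Gamma,d)$ with the pointwise Cerami metric $d(\gamma_1,\gamma_2)=\max_t\delta(\gamma_1(t),\gamma_2(t))$. It then builds a pseudo-gradient from the upper semicontinuity of $\phi^{0}$, the lower semicontinuity of $\lambda$ and Lebourg's mean value theorem. The mechanism works as intended. Moving $\tilde\gamma_n(t)$ along a straight segment by $s(1+\|\tilde\gamma_n(t)\|_X)v(t)$ costs at most $s/(1-s)$ in $d$, uniformly in $t$. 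The same move lowers $\phi$ by about $sC\varepsilon_n/2$ near $M_n$. So the Ekeland slope $\varepsilon_n$ becomes the Cerami bound at an almost-maximum point. The citation gives generality (arbitrary min-max classes) at no cost in length. Your route shows explicitly that only local Lipschitz continuity of $\phi$ is used.

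Two remarks. First, the hypothesis as printed, $\max\{\phi(0),\phi(x_1)\}<\inf_{\|x\|_X\le\rho}\phi$, can never hold, because $0$ lies in the closed ball. The intended condition is $\inf_{\|x\|_X=\rho}\phi$. Also, $J$ should read $\phi$, and the Cerami sequence is at level $c$. Your argument only uses the sphere, through the intermediate value theorem, so write $c\ge\inf_{\|x\|_X=\rho}\phi>\max\{\phi(0),\phi(x_1)\}$ rather than copying the ball.

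Second, several details need tightening, none of them serious.
\begin{itemize}
\item In a non-reflexive $X$ you cannot in general find a unit $h_x$ with $\phi^{0}(x;h_x)\le-\lambda(x)$ exactly. Separate the weak$^*$-compact convex set $\partial\phi(x)$ from the closed ball of radius $r<\lambda(x)$ in $X^*$, which is also weak$^*$-compact. This gives $\phi^{0}(x;h_x)<-r$, which is all you need.
\item The cutoff must satisfy $\theta\equiv1$ on an open set $W\supset M_n$, not merely on $M_n$. The drop in $\phi$ is only $s\theta(t)C\varepsilon_n/2$, which degenerates where $\theta<1$. So use the gap $\phi\circ\tilde\gamma_n\le\Psi(\tilde\gamma_n)-\eta$ on $[0,1]\setminus W$. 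On $U\setminus W$ use only that the deformation does not increase $\phi$.
\item Choose $U$ so that the strict inequality $(1+\|\cdot\|_X)\lambda>C\varepsilon_n$ holds on $\tilde\gamma_n(\overline U)$, since the centres $x_i$ of your partition of unity lie there.
\item Take $C$ with some room, say $C=4$. This absorbs the ratio $(1+\|x_i\|_X)/(1+\|\tilde\gamma_n(t)\|_X)$ and the factor $1/(1-s)$.
\item Ekeland is applied in $(\Gamma,d)$, so note that $\Psi$ is continuous for $d$. This holds because $d$-convergent sequences of paths are uniformly bounded, hence uniformly norm-convergent.
\end{itemize}
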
}
By the virtue of Lemmas \ref{lemma 3.1} and \ref{lemma 2.1}, we obtain a Cerami sequence $ \{u_n\}\subset{\mathbf{X}_{\epsilon}}$ such
\begin{equation}\label{CC} 
I_{\epsilon,\beta}(u_n)\rightarrow c_{\epsilon,\beta}~  \text{and}~ (1+\|u_n\|_{\mathbf{X}_{\epsilon}}) \lambda(u_n)\rightarrow 0,
      \end{equation}  
      where $\lambda$ defined in \eqref{lambda} and
      \begin{equation*}
         c_{\epsilon,\beta}=\underset{\gamma\in \Gamma_{\epsilon,\beta}}{\inf}\underset{t\in [0,1]}{\max}I_{\epsilon,\beta}(\gamma(t)), 
      \end{equation*}
      and
      \begin{equation*}
        \Gamma_{\epsilon,\beta}=\{\gamma\in (C[0,1],\mathbf{X}_{\epsilon}):I_{\epsilon,\beta}(0)=0, I_{\epsilon,\beta}(\gamma(1))<0\}. 
      \end{equation*} The next lemma establishes that such a sequence is bounded. We are influenced by the methods discussed in \cite[Lemma 2.2]{Fang-2009} and \cite[Lemma 3.2]{Lam-2013} to prove the preceding lemma.  
\begin{lemma}\label{lemma 3.2}
   Let \ref{g1}, \ref{g2} and \ref{g5} hold, then Cerami sequence $\{u_n\}$ is bounded in $\mathbf{X}_{\epsilon}$. \end{lemma}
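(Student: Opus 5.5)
We argue by contradiction, following the Jeanjean/Lam–Lu scaling technique used in the cited works. Suppose, up to a subsequence, $\|u_n\|_{\mathbf{X}_{\epsilon}}\to\infty$. From \eqref{CC} there are $\rho_n\in\partial\Upsilon_{\epsilon,\beta}(u_n)$ with $\rho_n(x)\in[\underline{g_H}(\epsilon x,u_n),\overline{g_H}(\epsilon x,u_n)]$ a.e. such that $|\langle Q_\epsilon'(u_n)-\rho_n,u_n\rangle|\le \lambda(u_n)\|u_n\|\to0$. Hence
\[
Nc_{\epsilon,\beta}+o_n(1)=NI_{\epsilon,\beta}(u_n)-\langle Q'_\epsilon(u_n)-\rho_n,u_n\rangle
=\Big(\tfrac{N}{p}-1\Big)\|u_n\|^p_{W^{1,p}_{V_\epsilon}}+\int_{\mathbb{R}^N}\big(\rho_nu_n-NG_H(\epsilon x,u_n)\big)\dx .
\]
Since $p<N$, the first term is nonnegative. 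Thus $\int(\rho_nu_n-NG_H)$ is bounded above. Because $H(\cdot-\beta)$ only jumps at $t=\beta$, where $\rho_n u_n\ge0$, this integrand is essentially $\mathscr{G}_H$, the $\mathscr{G}$-type function built from $g_H$.

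Next we normalize. Set $w_n=u_n/\|u_n\|_{\mathbf{X}_\epsilon}$, so $\|w_n\|=1$. We may assume $w_n\rightharpoonup w$ in $\mathbf{X}_\epsilon$ and $w_n\to w$ a.e. There are then two cases.

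\emph{Case $w\neq0$.} Divide $I_{\epsilon,\beta}(u_n)$ by $\|u_n\|^N$. The quadratic part $Q_\epsilon(u_n)/\|u_n\|^N$ is bounded, using $p<N$. On $\{w\ne0\}$ we have $|u_n|\to\infty$, and Remark \ref{remark3.1} gives $G_H(\epsilon x,u_n)/|u_n|^N\to\infty$. By Fatou's lemma, since $G_H\ge -C$ is controlled, we get $\int G_H(\epsilon x,u_n)/\|u_n\|^N\to\infty$. This contradicts $I_{\epsilon,\beta}(u_n)\to c_{\epsilon,\beta}$.

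\emph{Case $w=0$.} Choose $t_n\in[0,1]$ maximizing $I_{\epsilon,\beta}(tu_n)$. For fixed $R>0$, let $v_n=(2NR)^{1/N}w_n$ (or a similar constant scaling). Then $v_n\rightharpoonup0$, and by local compactness $\int G_H(\epsilon x,v_n)\to0$. The Trudinger–Moser bound (Lemma \ref{lemma2.2}) applies provided $(2NR)^{1/(N-1)}\alpha_0\|\nabla w_n\|_N^{N/(N-1)}$ is kept below $\alpha_N$. We handle this by the splitting $\|w_n\|\le1$ together with the $(N-1)$-power scaling, possibly restricting $R$ small relative to $\alpha_N/\alpha_0$ and then using the growth estimate \eqref{eqn3.1} to get $\int|v_n|^\nu\Phi\to0$ from the $L^s$ convergence on bounded sets. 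The tails must also be controlled: we expect to need the bound \eqref{eqn3.1} with $\tau$ small together with $\|v_n\|_N$ bounded.

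From this, $I_{\epsilon,\beta}(t_nu_n)\ge I_{\epsilon,\beta}(v_n)\ge \frac{1}{N}\cdot 2NR\cdot c-o(1)$, using the inequality $\frac1p a^p+\frac1N b^N\gtrsim\|\cdot\|^N$ for the relevant range, as in Lemma \ref{lemma 3.1}. Hence $I_{\epsilon,\beta}(t_nu_n)\to\infty$. Because $I(0)=0$ and $I(u_n)\to c$, we have $t_n\in(0,1)$ for large $n$. The maximality, via the nonsmooth chain rule, gives $0\in\langle\partial I(t_nu_n),t_nu_n\rangle$, so $\langle Q'(t_nu_n),t_nu_n\rangle=\int\tilde\rho_n t_nu_n$ with $\tilde{\rho}_n$ a selection of $\partial\Upsilon_{\epsilon,\beta}(t_nu_n)$. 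Then
\[
NI(t_nu_n)\le \Big(\tfrac Np-1\Big)t_n^p\|u_n\|^p_{W^{1,p}_{V_\epsilon}}+\int\mathscr{G}_H(\epsilon x,t_nu_n).
\]
Applying \ref{g5} gives $\int\mathscr{G}_H(t_nu_n)\le\delta\int\mathscr{G}_H(u_n)$, and $t_n^p\le1$. Combined with the first identity, the right-hand side is at most $\delta(Nc+o(1))$ plus a multiple of the $p$-term. This contradicts $I(t_nu_n)\to\infty$, provided the $p$-term is also controlled.

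\textbf{Main obstacle.} That control is the hardest point, because the mixed operator produces the extra $(\frac Np-1)\|u_n\|^p$ term with a positive sign on both sides. I would first try comparing $N I(t_nu_n)-\delta\cdot(\text{identity})$ directly: the $p$-terms appear as $(\frac Np-1)(t_n^p-\delta)\|u_n\|^p\le0$ since $\delta>1\ge t_n^p$, so they cancel favorably. A second technical point is justifying \ref{g5} for $g_H$ across the Heaviside jump and handling the measurable selections $\rho_n$. On $[u_n>\beta]$ the function $\mathscr{G}_H$ differs from $\mathscr{G}$ by the constant $NG(\beta)$-type shift, which is bounded by $C(\beta)\,\mathrm{m}$ of a set that must itself be controlled. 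Failing that, I would absorb this shift using $\beta$ small.
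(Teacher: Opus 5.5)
Your scheme (normalize, split according to the weak limit, maximize $t\mapsto I_{\epsilon,\beta}(tu_n)$ on $[0,1]$, bound from above via \ref{g5}) is the same as the paper's. The point you single out as the main obstacle is actually harmless. Taking $s=0$ in \ref{g5} gives $\mathscr{G}\ge0$, and $\mathscr{G}_H(x,t)=\mathscr{G}(x,t)+NG(x,\beta)$ for $t>\beta$, so $\rho_nu_n-NG_H(\epsilon x,u_n)\ge0$ a.e. Your first identity therefore bounds $\|u_n\|_{W^{1,p}_{V_\epsilon}}$ outright. Since $\delta>1$, the Heaviside shift only helps: $\mathscr{G}_H(x,st)\le\delta\,\mathscr{G}_H(x,t)$ holds pointwise (including the selection at $st=\beta$), with no smallness of $\beta$ needed.

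\textbf{The genuine gap is the case $w=0$.} To get $I_{\epsilon,\beta}(t_nu_n)\to\infty$ you need $\int_{\mathbb{R}^N}G_H(\epsilon x,sw_n)\dx\to0$ for arbitrarily large $s$ (your $s=(2NR)^{1/N}$). Under critical growth, Lemma \ref{lemma2.2} controls $\int\Phi(\alpha|sw_n|^{N/(N-1)})\dx$ with $\alpha>\alpha_0$ only while $\alpha_0s^{N/(N-1)}<\alpha_N$. Beyond that threshold, nothing prevents $w_n\rightharpoonup0$ with $\|\nabla w_n\|_N\le1$ from concentrating like a Moser sequence, and then $\int G_H(\epsilon x,sw_n)\dx$ can blow up.

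Your fallback of restricting $R$ relative to $\alpha_N/\alpha_0$ is incompatible with the conclusion you draw. It only gives a finite lower bound for $\liminf_n I_{\epsilon,\beta}(t_nu_n)$, of order $\frac{1}{N}\left(\frac{\alpha_N}{\alpha_0}\right)^{N-1}$. Meanwhile your \ref{g5} estimate gives $\limsup_n I_{\epsilon,\beta}(t_nu_n)\le\delta c_{\epsilon,\beta}$. These two bounds are compatible unless the level is small. The missing ingredient is an a priori bound of the type $\delta c_{\epsilon,\beta}<\frac{1}{N}\left(\frac{\alpha_N}{\alpha_0}\right)^{N-1}$. This can be arranged as in Lemma \ref{lemma3.4}, by taking $\zeta$ in \ref{g2} large; without it the argument does not close. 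The paper's proof has the same weak point: it asserts $\int G_H(\epsilon x,r_kv_n)\dx\to0$ for each fixed $r_k$ by dominated convergence, with no dominating function. So this step cannot simply be borrowed from it.

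\textbf{Two further steps also need repair.}

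First, even below the Trudinger--Moser threshold, local compactness only handles bounded sets. The term $C_1(\tau)|t|^{\nu}\Phi_{\alpha_0,N-2}(t)$ in \eqref{eqn3.1} is not controlled on $\mathbb{R}^N$ by the boundedness of $\|w_n\|_N$, because $w_n\rightharpoonup0$ does not give $w_n\to0$ in $L^r(\mathbb{R}^N)$ for $r>N$. There are two ways to fix this:
\begin{itemize}
\item Use Lions' lemma. Vanishing gives $L^r$-convergence; non-vanishing, after translating by suitable $y_n$, is handled like the case $w\neq0$, using the $x$-uniformity in Remark \ref{remark3.1}.
\item Use the penalization directly. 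By \ref{f4}, $\tilde f(t)\le\frac{V_0}{k}t^{N-1}$ for $t\ge0$, so $\int_{\Lambda_\epsilon^c}G_H(\epsilon x,sw_n)\dx\le\frac{s^N}{Nk}$, while $\Lambda_\epsilon$ is bounded.
\end{itemize}

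Second, in the case $w\neq0$ you cannot invoke Remark \ref{remark3.1} on all of $\{w\neq0\}$, since $G_H(\epsilon x,t)=0$ for $t\le\beta$. Test the Cerami condition with $u_n^-$; recall $\rho_n=0$ on $\{u_n<\beta\}$. This gives $\|u_n^-\|_{W^{1,p}_{V_\epsilon}}^p+\|u_n^-\|_{W^{1,N}_{V_\epsilon}}^N\le\lambda(u_n)\|u_n\|_{\mathbf{X}_\epsilon}\to0$, hence $w\ge0$, and you can then run the superlinearity argument on $\{w>0\}$.
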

    \begin{proof}
      On contrary suppose that the sequence $\{u_n\}$ is not bounded in $\mathbf{X}_{\epsilon}.$ This means 
      \begin{equation*}
          \|u_n\|_{\mathbf{X}_{\epsilon}}\rightarrow+\infty~\text{as}~~n\rightarrow+\infty.
      \end{equation*}
      Define $v_n=\frac{u_n}{\|u_n\|_{\mathbf{X}_{\epsilon}}}.$ Then $v_n$ is bounded in $\mathbf{X}_{\epsilon} $ as $\|u_n\|_{\mathbf{X}_{\epsilon}}=1$, for all $n\in \mathbb{N}.$ So, up to a subsequence still denoted by itself, we can assume that 
      \begin{equation*}
     \begin{cases}
     v_n\rightharpoonup v~~\text{ in~~}\mathbf{X}_{\epsilon},\\
     v_n(x)\rightarrow v(x)~\text{a.e. ~in~}~\mathbb{R}^N.
     \end{cases}
     \end{equation*}
     {\bf Claim:} $v=0$ a.e. in $\mathbb{R}^N.$\\
     Consider the set  $S=\{x\in\mathbb{R}^N:v(x)\neq0\}$ such that $m(S) \neq 0$. If $x\in S$, then,  we have
     $|u_n(x)|=|v_n(x)|\|u_n\|_{\mathbf{X}_{\epsilon}}\rightarrow+\infty$, for $x\in S$ as $n\rightarrow+\infty.$ From assumption \ref{g2} (Remark \ref{remark3.1}), for each $x\in S,$ we get
     \begin{equation}\label{limit}
         \underset{n\rightarrow+\infty}{\lim}\frac{G(x,u_n(x))}{|u_n(x)|^N}\frac{|u_n(x)|^N}{\|u_n\|_{\mathbf{X}_{\epsilon}}^N}=\underset{n\rightarrow+\infty}{\lim}\frac{G(x,u_n(x))}{|u_n(x)|^N}|v_n(x)|^N=+\infty~~     
     \end{equation}
     and $G(x,t)\geq K_1$, for all $(x,t)\in S\times \mathbb{R}$, for some constant $K_1$. So 
     \begin{equation*}
         \frac{G(x,u_n)-K_1}{\|u_n\|_{\mathbf{X}_{\epsilon}}^N}\geq0,~\forall~x\in S~\text{and}~\forall~n\in\mathbb{N}.
     \end{equation*}
     That is
     \begin{equation*}
         \frac{G(x,u_n)|v_n(x)|^N}{|u_n(x)|^N}-\frac{K_1}{\|u_n\|_{\mathbf{X}_{\epsilon}}^N}\geq0,~\forall~x\in S~\text{and}~\forall~n\in\mathbb{N}.
     \end{equation*}
     From \eqref{CC}, it follows that as $n \to \infty$, we obtain
     \begin{align*}
         c_{\epsilon,\beta}&=I_{\epsilon,\beta}(u_n)+o_n(1), \\& 
         \geq\frac{1}{N}\left(\|u_n\|_{W^{1,p}_{V_{\epsilon}}}^p+\|u_n\|_{W^{1,N}_{V_{\epsilon}}}^N \right)-\underset{\mathbb{R}^N}{\int}G_{H}(\epsilon x,u_n)\dx+o_n(1).
         \end{align*}
         Hence
         \begin{equation}\label{eq1}
         \underset{\mathbb{R}^N}{\int}G_{H}(\epsilon x,u_n)\dx\geq\frac{1}{N}\left(\|u_n\|_{W^{1,p}_{V_{\epsilon}}}^p+\|u_n\|_{W^{1,N}_{V_{\epsilon}}}^N \right)-c_{\epsilon,\beta}+o_n(1) ~\text{ as } n \rightarrow+\infty.
         \end{equation}
         Similarly,
         \begin{equation}\label{eq2}
         \underset{\mathbb{R}^N}{\int}G_{H}(\epsilon x,u_n)\dx\leq\frac{1}{p}\left(\|u_n\|_{W^{1,p}_{V_{\epsilon}}}^p+\|u_n\|_{W^{1,N}_{V_{\epsilon}}}^N \right)-c_{\epsilon,\beta}+o_n(1), ~\text{as n}\rightarrow+\infty.
         \end{equation}
         We have $G_H(\epsilon x,u_n)\leq G(\epsilon x,u_n)$, for large $n$,
         and then by \eqref{limit} and Fatou's Lemma, 
         \begin{equation}\label{eq3}
             +\infty=\underset{S}{\int}\underset{n\rightarrow+\infty}{{\liminf}}\frac{G(\epsilon x,u_n)}{|u_n|^N}|v_n|^N \dx\leq\underset{n\rightarrow+\infty}{{\liminf}}\underset{S}{\int}\frac{G(\epsilon x,u_n)}{\|u_n\|_{\mathbf{X}_{\epsilon}}^N}\dx.
         \end{equation}
         Since $$ \|u_n\|_{\mathbf{X}_{\epsilon}}=\|u_n\|_{W^{1,p}_{V_{\epsilon}}}+\|u_n\|_{W^{1,N}_{V_{\epsilon}}}\rightarrow+\infty,~\text{as}~n\rightarrow+\infty.$$
         Hence, three cases arise:
         \begin{itemize}
             \item[(i)] Both $\|u_n\|_{W^{1,p}_{V_{\epsilon}}} $ and $ \|u_n\|_{W^{1,N}_{V_{\epsilon}}}$ are unbounded.
             \item[(ii)] $\|u_n\|_{W^{1,p}_{V_{\epsilon}}} $ is bounded and $ \|u_n\|_{W^{1,N}_{V_{\epsilon}}}$ are unbounded.
             \item[(iii)]
             $\|u_n\|_{W^{1,p}_{V_{\epsilon}}} $ is unbounded and $ \|u_n\|_{W^{1,N}_{V_{\epsilon}}}$ are bounded.
         \end{itemize}
         Suppose case(i) holds. Then there exists $N_0\in\mathbb{N}$ such that $\|u_n\|_{W^{1,p}_{V_{\epsilon}}}>1 $ and $\|u_n\|_{W^{1,N}_{V_{\epsilon}}}>1$ for all $n\geq N_0.$ This implies $\|u_n\|_{W^{1,p}_{V_{\epsilon}}} ^p
         \leq\|u_n\|_{W^{1,N}_{V_{\epsilon}}}^N$.
         As $n \to \infty$, from \eqref{eq1}, we have
         \begin{align*}
         c_{\epsilon,\beta}&\geq\frac{1}{N}\left(\|u_n\|_{W^{1,p}_{V_{\epsilon}}}^p+\|u_n\|_{W^{1,N}_{V_{\epsilon}}}^N \right)-\underset{\mathbb{R}^N}{\int}G_{H}(\epsilon x,u_n)\dx+o_n(1)\\
             & \geq\frac{1}{N2^{p-1}}\|u_n\|_{\mathbf{X}_{\epsilon}}^p-\underset{\mathbb{R}^N}{\int}G_{H}(\epsilon x,u_n)\dx+o_n(1).
         \end{align*}
         From  \eqref{eq3}, and $\|u_n\|_{W^{1,p}_{V_{\epsilon}}} ^p
         \leq\|u_n\|_{W^{1,N}_{V_{\epsilon}}}^N$ for all $n\geq N_0$, we have 
         \begin{align*}
            +\infty \leq\underset{n\rightarrow+\infty}{{\liminf}}\underset{S}{\int}\frac{G(\epsilon x,u_n)}{\|u_n\|_{\mathbf{X}_{\epsilon}}^N}\dx
            &\leq\underset{n\rightarrow+\infty}{{\liminf}}\underset{S}{\int}\frac{G(\epsilon x,u_n)}{\|u_n\|_{\mathbf{X}_{\epsilon}}^p}\dx\\&
            =\underset{n\rightarrow+\infty}{{\liminf}}\frac{\underset{S}{\int }G(\epsilon x,u_n)\dx}{N2^{p-1}(c_{\epsilon,\beta}+\underset{S}{\int}G(\epsilon x,u_n)\dx-o_n(1))}.
         \end{align*}
         As $n\rightarrow +\infty $, we have $$+\infty\leq \frac{1}{N2^{p-1}}.$$ which is not possible. So $v=0~\text{a.e. in  }~\mathbb{R}^N.$\\
Now, consider the case (ii) holds. This means there exists $M>0$ such that $\|u_n\|_{W^{1,p}_{V_{\epsilon}}}\leq M.$ Then, as $n \to \infty$, we get 
\begin{align}\label{eq2.3.9}
         c_{\epsilon,\beta}&\leq\frac{1}{p}\left(\|u_n\|_{W^{1,p}_{V_{\epsilon}}}^p+\|u_n\|_{W^{1,N}_{V_{\epsilon}}}^N \right)-\underset{\mathbb{R}^N}{\int}G_{H}(\epsilon x,u_n)\dx+o_n(1),\notag \\
&\leq\frac{M^p}{p}+\frac{1}{p}\|u_n\|_{W^{1,N}_{V_{\epsilon}}}^N-\underset{\mathbb{R}^N}{\int}G_{H}(\epsilon x,u_n)\dx+o_n(1).\end{align} 
Therefore, as $n \to \infty$, we derive
         \begin{align*} 
         \frac{1}{p}\|u_n\|_{\mathbf{X}_{\epsilon}}^N &\geq c_{\epsilon,\beta}-\frac{M^p}{p}+\underset{\mathbb{R}^N}{\int}G_{H}(\epsilon x,u_n)\dx+o_n(1). \end{align*} As $n \to \infty$, we have
         \begin{align*}
         \|u_n\|_{\mathbf{X}_{\epsilon}}^N\geq pc_{\epsilon,\beta}-{M^p}+p\underset{\mathbb{R}^N}{\int}G_{H}(\epsilon x,u_n)\dx+o_n(1).
         \end{align*}
         Again, using a similar argument for case (i), we have $$+\infty\le\frac{1}{p},$$ which is absurd. So $v=0~\text{a.e. in  }~\mathbb{R}^N.$ 
         One can repeat this argument for the case (iii) also. Therefore, we have proven that $v=0$ almost everywhere in $\mathbb{R}^N$ in all cases.\\ 
         {Next, we define a continuous function $K_n:[0,1]\rightarrow\mathbb{R}$ such that $K_n(t)=I_{\epsilon,\beta}(tu_n)$. Since $I_{\epsilon,\beta}$ is locally Lipschitz is also continuous, $K_n$ is also continuous on the compact interval $[0,1]$.}
       For each $n\in\mathbb{N}$, there is a $t$ (depends on $n$), say $t_n$ such that 
         \begin{equation}\label{eq4}
             I_{\epsilon,\beta}(t_nu_n)=\underset{t\in[0,1]}{\text{max}} ~I_{\epsilon,\beta}(tu_n)
         \end{equation}
         Note that, $t_n\in(0,1]$ for all $n$. Indeed, if $t_n=0$ for some $n\in \mathbb{N}$, as $I_{\epsilon,\beta}(u_n)\rightarrow c_{\epsilon,\beta}$, so there is $n_0\in \mathbb{N}$ such that $ I_{\epsilon,\beta}(u_n)\geq\frac{c_{\epsilon,\beta}}{2}$ for all $n\geq n_0,$ and there holds
         \begin{equation*}
             0<\frac{c_{\epsilon,\beta}}{2}<I_{\epsilon,\beta}(u_n)\leq\underset{t\in[0,1]}{\text{max}} ~I_{\epsilon,\beta}(tu_n)=\underset{t\in[0,1]}{\text{max}} ~I_{\epsilon,\beta}(t_nu_n)=I_{\epsilon,\beta}(0)=0,
         \end{equation*}
         which is a contradiction. So $t_n\in(0,1]$ for all $n$. Let $\{r_k\}$ be a sequence of positive real numbers such that $r_k>1$ and $\underset{k\rightarrow+\infty}{\text{lim}}{\text{r}_k}=+\infty.$ Then $\|r_kv_n\|_{\mathbf{X}_{\epsilon}}=r_k$ for any $k$ and $n$.
         For each $k\in \mathbb{N}$ and due to \ref{g1} and \ref{g2} (Remark \ref{remark3.1}), we have 
         \begin{equation*}
         |G_{H}(\epsilon x,r_kv_n)|\leq\tau r_k^N|v_n|^N+C_1(\tau)|r_k|^\nu|v_n|^\nu\Phi_{\alpha,N-2}(r_k^{\frac{N}{N-1}}v_n^{\frac{N}{N-1}}).
         \end{equation*}
         For fixed $k\in\mathbb{N}$, the following hold a.e. in $\mathbb{R^N}$, as $k \to \infty$
         \begin{equation*}
             r_kv_n(x)\rightarrow0~~\text{ and }  G_H(\epsilon x,r_kv_n)\rightarrow 0.
         \end{equation*}
So, by the Lebesgue dominated convergence theorem, we deduce
         \begin{equation} \label{eqstr}
         \underset{n\rightarrow+\infty}{\text{lim}}\underset{\mathbb{R}^N}{\int}G_{H}(\epsilon x,r_kv_n)\dx=0,~\text{for each fixed } k\in\mathbb{N}.
          \end{equation}
         For large $n$, we note $\frac{r_k}{\|u_n\|_{\mathbf{X}_{\epsilon}}}\in(0,1)$ because $\|u_n\|_{\boldsymbol{X}_{\epsilon}}\rightarrow+\infty$ as $n\rightarrow+\infty.$
         By the definition in \eqref{eq4}, we have 
         
         \begin{align}\label{eq2star}
        I_{\epsilon,\beta}(t_nu_n)&\geq I_{\epsilon,\beta}\left(\frac{r_ku_n}{\|u_n\|_{\mathbf{X}_{\epsilon}}}\right) 
            =I_{\epsilon,\beta}(r_kv_n) \notag\\&
            =\frac{1}{p}\|r_kv_n\|_{W^{1,p}_{V_{\epsilon}}}^p+\frac{1}{N}\|r_kv_n\|_{W^{1,N}_{V_{\epsilon}}}^N -\underset{\mathbb{R}^N}{\int}G_{H}(\epsilon x,r_kv_n)\dx \notag\\&
            \geq\frac{1}{N}\left(\frac{N}{p}\|r_kv_n\|_{W^{1,p}_{V_{\epsilon}}}^p+\|r_kv_n\|_{W^{1,N}_{V_{\epsilon}}}^N\right) -\underset{\mathbb{R}^N}{\int}G_{H}(\epsilon x,r_kv_n)\dx \notag\\&
            \geq\frac{1}{N} r_k^p\left(\|v_n\|_{W^{1,p}_{V_{\epsilon}}}^N+\|r_kv_n\|_{W^{1,N}_{V_{\epsilon}}}^N\right)^N-\underset{\mathbb{R}^N}{\int}G_{H}(\epsilon x,r_kv_n)\dx \notag\\&
            \geq\frac{1}{N2^{N-1}}r_k^p-\underset{\mathbb{R}^N}{\int}G_{H}(\epsilon x,r_kv_n)\dx.
         \end{align}
         Now by using \eqref{eqstr} and \eqref{eq2star}, we deduce
         \begin{equation}\label{eq5}
             \underset{n\rightarrow+\infty}{\limsup}~I_{\epsilon,\beta}(t_nu_n)=+\infty.
         \end{equation}
         For $\omega_n\in\partial I_{\epsilon,\beta}(t_nu_n)$, we obtain
         \begin{align*}
          I_{\epsilon,\beta}(t_nu_n) &=I_{\epsilon,\beta}(t_nu_n)-\frac{1}{N}\langle\omega_n,t_nu_n\rangle+o_n(1)\\
          &=\frac{t_n^p}{p}\|u_n\|_{W^{1,p}_{V_{\epsilon}}}^p+\frac{t_n^N}{N}\|u_n\|_{W^{1,N}_{V_{\epsilon}}}^N-\underset {\mathbb{R}^N}{\int}G_{H}(\epsilon x,t_nu_n)\dx\\ &\quad-\frac{t_n^p}{N}\|u_n\|_{W^{1,p}_{V_{\epsilon}}}^p-\frac{t_n^N}{N}\|u_n\|_{W^{1,N}_{V_{\epsilon}}}^N+\frac{1}{N}\underset {\mathbb{R}^N}{\int}\rho _n(t_nu_n)\dx+o_n(1)
         \end{align*}
         where $$\rho_n(x)\in[\underline{g_H}(\epsilon x,u_n(x)),\overline{g_H}(\epsilon x,u_n(x))]=\begin{cases}
             0,~u_n(x)<\beta\\
             [0,g(\epsilon x,\beta)], ~~u_n(x)=\beta\\
             \{g(\epsilon x,u_n(x))\},~u_n(x)>\beta 
             \end{cases},
         \text{ a.e.} ~\text{in}~\mathbb{R}^N. $$
          Now thanks to \ref{g5} and for large enough $n$, we get
         \begin{align*}
          I_{\epsilon,\beta}(t_nu_n)&=\frac{t_n^p}{p}\|u_n\|_{W^{1,p}_{V_{\epsilon}}}^p+\frac{t_n^N}{N}\|u_n\|_{W^{1,N}_{V_{\epsilon}}}^N-\underset {\mathbb{R}^N}{\int}G(\epsilon x,t_nu_n)\dx-\frac{t_n^p}{N}\|u_n\|_{W^{1,p}_{V_{\epsilon}}}^p-\frac{t_n^N}{N}\|u_n\|_{W^{1,N}_{V_{\epsilon}}}^N\\&\quad +\frac{1}{N}\underset {\mathbb{R}^N}{\int}g(\epsilon x,u_n(x)t_n)(t_nu_n)\dx+o_n(1)\\&
          =\frac{t_n^p}{p}\|u_n\|_{W^{1,p}_{V_{\epsilon}}}^p+\frac{t_n^N}{N}\|u_n\|_{W^{1,N}_{V_{\epsilon}}}^N-\frac{1}{N}\underset {\mathbb{R}^N}{\int}[NG(\epsilon x,t_nu_n)-g(\epsilon x, u_n(x)t_n)t_nu_n]\dx\\&\quad-\frac{t_n^p}{N}\|u_n\|_{W^{1,p}_{V_{\epsilon}}}^p-\frac{t_n^N}{N}\|u_n\|_{W^{1,N}_{V_{\epsilon}}}^N+o_n(1)\\&
          \leq \frac{t_n^p}{p}\|u_n\|_{W^{1,p}_{V_{\epsilon}}}^p+\frac{t_n^N}{N}\|u_n\|_{W^{1,N}_{V_{\epsilon}}}^N-{\delta}\underset {\mathbb{R}^N}{\int}G_{H}(\epsilon x,u_n)\dx-\frac{t_n^p}{N}\|u_n\|_{W^{1,p}_{V_{\epsilon}}}^p-\frac{t_n^N}{N}\|u_n\|_{W^{1,N}_{V_{\epsilon}}}^N\\ &\quad+\frac{\delta}{N}\underset {\mathbb{R}^N}{\int}g(\epsilon x,u_n(x))(u_n)dx+o_n(1)\\
          &\leq I_{\epsilon,\beta}(u_n)-\left(\frac{t_n^p}{N}\|u_n\|_{W^{1,p}_{V_{\epsilon}}}^p+\frac{t_n^N}{N}\|u_n\|_{W^{1,N}_{V_{\epsilon}}}^N-\frac{\delta}{N}\underset {\mathbb{R}^N}{\int}g(\epsilon x,u_n(x))(u_n)\dx\right)+o_n(1).
          \end{align*}
          Due to condition $(1+\|u_n\|_{\mathbf{X}_{\epsilon}}) \lambda(u_n)\rightarrow 0 ~\text{as}~n\rightarrow+\infty $, the sequence $$\bigg\{\frac{t_n^p}{N}\|u_n\|_{W^{1,p}_{V_{\epsilon}}}^p+\frac{t_n^N}{N}\|u_n\|_{W^{1,N}_{V_{\epsilon}}}^N-\frac{\delta}{N}\underset {\mathbb{R}^N}{\int}g(\epsilon x,u_n(x))u_n\dx\bigg\} $$ is bounded. So as $n\rightarrow+\infty,$ we have
          $$\underset{n \rightarrow+\infty}{\text{lim}} I_{\epsilon,\beta}(t_nu_n)\leq C_2$$ for some constant $C_2>0$, which contradicts to \eqref{eq5}. Hence sequence  $\{u_n\}_{n\geq1}$ is bounded in $\mathbf{X}_{\epsilon}.$ 
          \end{proof}
\begin{lemma}\label{lemma 3.3}
      Assume that \ref{g1}-\ref{g3} and \ref{g5} hold. Let $\{u_n\}$ be a Cerami sequence such that $\underset{n\rightarrow+\infty}{{\limsup}}\|u_n\|_{W^{1,N}}^{N'}<\frac{\alpha_N}{\alpha_0}.$ Then, there exists $u_{\epsilon,\beta} \in\mathbf{X}_{\epsilon}$ such that $\nabla u_n\rightarrow \nabla u_{\epsilon,\beta}$ \text{a.e.} in $\mathbb{R}^N.$ \end{lemma}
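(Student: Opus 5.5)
We will use the standard localized Boccardo--Murat type argument, adapted to the nonsmooth setting and to the sum of the $p$- and $N$-Laplacians. By Lemma \ref{lemma 3.2} the Cerami sequence $\{u_n\}$ is bounded in $\mathbf{X}_{\epsilon}$. Up to a subsequence, $u_n\rightharpoonup u_{\epsilon,\beta}$ in $\mathbf{X}_{\epsilon}$, $u_n\to u_{\epsilon,\beta}$ in $L^s_{loc}(\mathbb{R}^N)$ for every $s\ge1$, and $u_n\to u_{\epsilon,\beta}$ a.e. Let $\rho_n\in\partial\Upsilon_{\epsilon,\beta}(u_n)$ be the element realizing $\lambda(u_n)$, so that $\rho_n(x)\in[\underline{g_H}(\epsilon x,u_n),\overline{g_H}(\epsilon x,u_n)]$ a.e. Then for every $v\in\mathbf{X}_{\epsilon}$,
\[
\sum_{t\in\{p,N\}}\int_{\mathbb{R}^N}\big(|\nabla u_n|^{t-2}\nabla u_n\cdot\nabla v+V(\epsilon x)|u_n|^{t-2}u_nv\big)\dx-\int_{\mathbb{R}^N}\rho_nv\dx=\langle w_n,v\rangle ,
\]
with $\|w_n\|_{\mathbf{X}_\epsilon^*}\to0$.

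Fix $R>0$ and a cutoff $\psi\in C_c^\infty(B_{2R})$ with $0\le\psi\le1$ and $\psi\equiv1$ on $B_R$. For $\tau>0$ let $T_\tau(s)=\max\{-\tau,\min\{s,\tau\}\}$, and test with $v_n=\psi\,T_\tau(u_n-u_{\epsilon,\beta})$, which is bounded in $\mathbf{X}_\epsilon$. Define the monotone quantity
\[
e_n=\sum_{t\in\{p,N\}}\big(|\nabla u_n|^{t-2}\nabla u_n-|\nabla u_{\epsilon,\beta}|^{t-2}\nabla u_{\epsilon,\beta}\big)\cdot\nabla(u_n-u_{\epsilon,\beta})\ \ge0 .
\]
We aim to show $\limsup_n\int_{B_R\cap[|u_n-u_{\epsilon,\beta}|<\tau]}e_n\dx\le C\tau$. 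The term with $\nabla u_{\epsilon,\beta}$ vanishes in the limit by weak convergence, since $\chi_{[|u_n-u|<\tau]}\nabla T_\tau(u_n-u)$ converges weakly to $0$ in $L^t$. The terms containing $\nabla\psi$ and $V(\epsilon x)$ go to zero by strong $L^t_{loc}$ convergence and $|T_\tau|\le\tau$. The term $\langle w_n,v_n\rangle$ is $o_n(1)$.

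The main obstacle is the nonlinear term $\int\rho_nv_n\dx$, because of the critical exponential growth. We will show that $\{\rho_n\}$ is bounded in $L^1(B_{2R})$, in fact in $L^q(B_{2R})$ for some $q>1$. Since $|\rho_n|\le|g(\epsilon x,u_n)|$ up to the jump at $\beta$, where it is bounded by $g(\epsilon x,\beta)$, the growth estimate from the proof of Lemma \ref{lemma 3.1} gives
\[
|\rho_n|\le\tau'|u_n|^{N-1}+C|u_n|^{\nu-1}\Phi_{\alpha,N-2}(u_n), \qquad \alpha>\alpha_0 \text{ close to } \alpha_0 .
\]
The hypothesis $\limsup_n\|u_n\|_{W^{1,N}}^{N'}<\alpha_N/\alpha_0$ allows us to choose $\alpha$ and $q>1$ with $q\alpha\|u_n\|_{W^{1,N}}^{N'}<\alpha_N$ for large $n$. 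Then Lemma \ref{lemma2.2}, applied to $u_n/\|u_n\|_{W^{1,N}}$, together with H\"older's inequality, gives $\sup_n\int_{B_{2R}}\Phi_{\alpha,N-2}(u_n)^q\dx<\infty$. Hence $\int\rho_nv_n\dx\le\tau\|\rho_n\|_{L^1(B_{2R})}\le C\tau$. Since $\rho_n$ is bounded in $L^q(B_{2R})$ and $T_\tau(u_n-u_{\epsilon,\beta})\to0$ a.e. and boundedly, this term even tends to $0$; either version is enough.

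It remains to convert the truncated estimate into a.e. convergence. Fix $\theta\in(0,1)$ and split $B_R$ into the sets $[|u_n-u_{\epsilon,\beta}|<\tau]$ and $[|u_n-u_{\epsilon,\beta}|\ge\tau]$. By H\"older's inequality,
\[
\int_{B_R}e_n^\theta\dx\le\Big(\int_{B_R\cap[|u_n-u_{\epsilon,\beta}|<\tau]}e_n\dx\Big)^\theta m(B_R)^{1-\theta}+\Big(\int_{B_R}e_n\dx\Big)^\theta m\big([|u_n-u_{\epsilon,\beta}|\ge\tau]\cap B_R\big)^{1-\theta}.
\]
The second measure tends to $0$ and $\int_{B_R}e_n\dx$ is bounded, so $\limsup_n\int_{B_R}e_n^\theta\dx\le(C\tau)^\theta m(B_R)^{1-\theta}$. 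Letting $\tau\to0$ gives $e_n^\theta\to0$ in $L^1(B_R)$, hence $e_n\to0$ a.e. along a subsequence. The strict monotonicity of $\xi\mapsto|\xi|^{t-2}\xi$ (for instance through the inequality $(|a|^{t-2}a-|b|^{t-2}b)\cdot(a-b)\ge c_t|a-b|^t$, valid for $t\ge2$, which holds since $2<p<N$) then yields $\nabla u_n\to\nabla u_{\epsilon,\beta}$ a.e. in $B_R$. A diagonal argument over $R\in\mathbb{N}$ completes the proof.
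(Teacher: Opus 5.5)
Your proof is correct, but it follows a different route from the paper. The paper tests the Cerami relation directly with $\psi(u_n-u_{\epsilon,\beta})$, with no truncation. It uses Simon's inequality to dominate $|\nabla u_n-\nabla u_{\epsilon,\beta}|^N$ by the monotone quantity. It then disposes of $\int\rho_n\psi(u_n-u_{\epsilon,\beta})\dx$ via the growth bound, H\"older and Trudinger--Moser; this is where $\limsup_n\|u_n\|_{W^{1,N}}^{N'}<\alpha_N/\alpha_0$ is used. The outcome is strong $L^N_{loc}$ convergence of the gradients, and a.e.\ convergence follows along a subsequence.

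You instead use the Boccardo--Murat truncation $\psi T_\tau(u_n-u_{\epsilon,\beta})$ and the $e_n^\theta$ trick. This gives only convergence in measure of $\nabla u_n$ on compacts (hence strong $L^s_{loc}$ convergence for $s<N$), which is essentially exactly the statement. As you note yourself, once $\rho_n$ is bounded in $L^q(B_{2R})$ with $q>1$, the truncation is superfluous. Then $\int\rho_n\psi(u_n-u_{\epsilon,\beta})\dx\to0$ directly, and you are back to the paper's argument with its stronger conclusion.

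What your route genuinely buys is that it needs only $\sup_n\|\rho_n\|_{L^1(B_{2R})}<\infty$, and this holds without the smallness hypothesis. Since $\rho_n\ge0$ (as $g(x,t)\ge0$ for $t>0$) and $\rho_n=0$ on $[u_n<\beta]$,
\[
\beta\int_{\mathbb{R}^N}\rho_n\dx\le\int_{\mathbb{R}^N}\rho_nu_n\dx=\|u_n\|_{W^{1,p}_{V_\epsilon}}^p+\|u_n\|_{W^{1,N}_{V_\epsilon}}^N-\langle w_n,u_n\rangle ,
\]
which is bounded by Lemma \ref{lemma 3.2}. So with your truncation argument, the a.e.\ convergence of gradients holds for every Cerami sequence. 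The Trudinger--Moser restriction is only really needed later, for the strong convergence in Lemma \ref{lemma3.5}.

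Your write-up also treats explicitly two points the paper passes over. One is the cross term with $|\nabla u_{\epsilon,\beta}|^{t-2}\nabla u_{\epsilon,\beta}$, which you handle by weak convergence. The other is that the cutoff controls the monotone quantity only on $B_R$, not on $B_{2R}$.
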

      \begin{proof}
         Using Lemma \ref{lemma 3.2}, up to a subsequence still denoted by itself, we can assume that 
\begin{equation*}
     \begin{cases}
     u_n\rightharpoonup u_{\epsilon,\beta}~\text{in~~}~\mathbf{X}_{\epsilon}\\
     u_n(x)\rightarrow u_{\epsilon,\beta}(x)~\text{a.e. ~in~}~\mathbb{R}^N\\
     u_n\rightarrow u_{\epsilon,\beta} ~\text{in}~L^q(B_R), ~\forall~q\in[1,\infty) ~\text{and for some R>0.}
     \end{cases}
     \end{equation*}
     Due to the boundedness of the sequence $\{ u_n\}$, we obtain$(1+\|u_n\|_{\mathbf{X}_{\epsilon}}) \lambda(u_n)\rightarrow0, ~\text{as}~n\rightarrow+\infty $ which is equivalent to assume $\lambda(u_n)\rightarrow0$ as $n\rightarrow+\infty$. Fix $R>0$. Let $\psi\in C_c ^{\infty}(\mathbb{R}^N)$, $0\leq\psi\leq1$ and
     \begin{equation*}
         \psi\equiv1~~\text{in}~B_{R} \text{ and } \psi\equiv0~~\text{in}~B_{2R}^c
     \end{equation*}
     and $\|\nabla\psi\|_{\infty}\leq ~C_3$, where $C_3$ does not depend on $R$. Now for each $u_n$, there is a $\omega_n\in\partial I_{\epsilon,\beta}(u_n)$ such that as $n\rightarrow +\infty$, we get
     \begin{equation}\label{3.6}
     \begin{split}
         \langle\omega_n, (u_n-u_{\epsilon,\beta})\psi\rangle=o_n(1)\\
         \langle Q'_{\epsilon,\beta}(u_n)-\rho_n,(u_n-u_{\epsilon,\beta})\psi\rangle=o_n(1)
         \end{split}
     \end{equation} 
     where $\rho_n(x)\in [\underline{g_H}(\epsilon x,u_n(x)),\overline{g_H}(\epsilon x,u_n(x))]$ a.e. in $\mathbb{R}^N.$
     Due to Simon's inequality \cite[Lemma 4.2]{Bahrouni-2018}, we get
     \begin{align}\label{3.7}
     C_N^{-1}\underset{{B_{2R}}}{\int}|\nabla u_n-\nabla u_{\epsilon,\beta}|^N \dx &\leq C_N^{-1}\left(\underset{B_{2R}}{\int}|\nabla u_n-\nabla u_{\epsilon,\beta}|^Ndx +V(\epsilon x)|u_n-u_{\epsilon,\beta}|^N \dx\right) \notag\\
         &\leq \sum_{t\in\{p,N\}}\underset{B_{2R}}{\int}\bigg[\left(|\nabla u_n|^{t-2}\nabla u_n-|\nabla u_{\epsilon,\beta}|^{t-2}\nabla u_{\epsilon,\beta}\right)\cdot\left(\nabla u_n -\nabla u_{\epsilon,\beta}\right) \notag\\ &\quad+V(\epsilon x)\left(|u_n|^{t-2}u_n-|u_{\epsilon,\beta}|^{t-2}u_{\epsilon,\beta}\right)\left(u_n-u_{\epsilon,\beta}\right)\bigg]\dx.
     \end{align}
     From equations \eqref{3.6} and \eqref{3.7}, we have 
     \begin{align}\label{3.8}
       C_N^{-1}\underset{{B_{2R}}}{\int}|\nabla u_n-\nabla u_{\epsilon,\beta}|^N \dx &\leq - \sum_{t\in\{p,N\}}  \big[\underset{\mathbb{R}^N}{\int} \left(|\nabla u_n|^{t-2}\nabla u_n-|\nabla u_{\epsilon,\beta}|^{t-2}\nabla u_{\epsilon,\beta}\right)(u_n-u_{\epsilon,\beta})\nabla\psi \dx \notag\\ &\quad+ \underset{\mathbb{R}^N}{\int}\rho_n\psi(u_n-u_{\epsilon,\beta} )\dx \big] +  o_n(1).
     \end{align}
     Now using the H\"older's inequality, compact embedding $W^{1,N}(B_{2R})\hookrightarrow \hookrightarrow L^p(B_{2R})$ for all $p\in[1,+\infty)$, and boundedness of $u_n$, for $t\in\{p,N\}$, we have
     \begin{align}\label{eq3.9}
      \left|  \underset{\mathbb{R}^N}{\int}\left(|\nabla u_n|^{t-2}\nabla u_n-|\nabla u_{\epsilon,\beta}|^{t-2}\nabla u_{\epsilon,\beta}\right)(u_n-u_{\epsilon,\beta})\nabla\psi \dx\right|
       &\leq\|\nabla\psi\|_{\infty}\left( \|\nabla u_n\|_{t}^{t-1}+\|\nabla u_{\epsilon,\beta}\|_{t}^{t-1}\right) \notag \\
       &\quad \times \left(\underset{B_{2R}}{\int}|u_n-u_{\epsilon,\beta}|^{t'}\dx\right)^{t'}.\end{align} We observe that the RHS of \eqref{eq3.9} tends to $0$ as $n$ tends to $\infty.$
Now, we also have \begin{equation}\label{eqn3.100} 
\left| \underset{\mathbb{R}^N}{\int}\rho_n\psi(u_n-u_{\epsilon,\beta} )\dx\right|\leq \underset{{B_{2R}}}{\int}|\rho_n||u_n-u_{\epsilon,\beta}|\dx.\end{equation}
     From \ref{g1} and \ref{g3}, it follows that for any $\tau>0$ and $\nu>N$ there exist a constant $C_1(\tau)$ (depends on $\tau$) and $\alpha_0$ such that 
\begin{equation}\label{eqn3.101}
    |g(x,t)|\leq \tau|t|^{N-1}+C_1(\tau)|t|^{\nu-1}\Phi_{\alpha_0,N-2}(t).
\end{equation}
Then, from \eqref{eqn3.100} and \eqref{eqn3.101}, we get
\begin{equation}\label{eq3.10}
 \underset{{B_{2R}}}{\int}|\rho_n||u_n-u_{\epsilon,\beta}|\dx\leq\underset{{B_{2R}}}{\int} \left(\tau|u_n|^{N-1}+C_1(\tau)|u_n|^{\nu-1}\Phi_{\alpha_0,N-2}(u_n)\right)|u_n-u_{\epsilon,\beta}|\dx =:I_1+I_2
\end{equation}
where $$I_1=\underset{{B_{2R}}}{\int} \left(\tau|u_n|^{N-1}\right)|u_n-u_{\epsilon,\beta}|\dx \text{ and } I_2=\underset{{B_{2R}}}{\int} C_1(\tau)|u_n|^{\nu-1}\Phi_{\alpha_0,N-2}(u_n)|u_n-u_{\epsilon,\beta}|\dx.$$
\\Using H\"older's inequality one can easily show that $I_1\rightarrow0$ as $n\rightarrow+\infty.$
Now for $I_2$, Choose $t>1$ and $\alpha>\alpha_0$ such that $\alpha t\|u_n\|_{W^{1,N}}^{N'}<\alpha_N$ for all $n\geq N_0$, and $\frac{1}{t}+\frac{1}{t'}=1$. This is possible only due to the assumption $\underset{n\rightarrow+\infty}{{\limsup}}\|u_n\|_{W^{1,N}}^{N'}<\frac{\alpha_N}{\alpha_0}.$ Next, we estimate $I_2$ and we obtain
\begin{align*}
    I_2 &=\underset{{B_{2R}}}{\int} C_1(\tau)|u_n|^{\nu-1}\Phi_{\alpha,N-2}(u_n)|u_n-u_{\epsilon,\beta}|\dx\\
    &\leq C_1(\tau)\left( \underset{{B_{2R}}}{\int}|u_n|^{(\nu-1)t'}|u_n-u_{\epsilon,\beta}|^{t'}\dx\right)^\frac{1}{t'}\left(\underset{{B_{2R}}}{\int} \left(\Phi_{\alpha,N-2}(u_n)\right)^t\dx\right)^\frac{1}{t}\\
    &\leq C_1(\tau)\left( \underset{{B_{2R}}}{\int}|u_n|^{(\nu-1)t'}|u_n-u_{\epsilon,\beta}|^{t'}\dx\right)^\frac{1}{t'}\left(\underset{{B_{2R}}}{\int} \left(\Phi_{\alpha t\|u_n\|_{W^{1,N}}^{N'},N-2}\left(\frac{u_n}{\|u_n\|_{W^{1,N}}}\right)\right)\dx\right)^\frac{1}{t}\\
    &\leq C_3\left( \underset{{B_{2R}}}{\int}|u_n|^{(\nu-1)t'}|u_n-u_{\epsilon,\beta}|^{t'}\dx\right)^\frac{1}{t'}.
\end{align*}
Choose $\nu=t'N+1 $ and apply again H\"older's inequality, we obtain
\begin{align}\label{eq3.11}
I_2 &\leq C_3\left( \underset{{B_{2R}}}{\int}|u_n|^{t'^2N}|u_n-u_{\epsilon,\beta}|^{t'}\dx\right)^\frac{1}{t'} \notag \\ & \leq C_3\left(\left(\underset{{B_{2R}}}{\int}|u_n|^\frac{t'^2N^2}{N-1}\dx \right)^\frac{N-1}{N}\left(\underset{{B_{2R}}}{\int}|u_n-u_{\epsilon,\beta}|^{Nt'} \dx\right)^{\frac{1}{N}}\right)^\frac{1}{t'} \notag\\
        &\leq C_4\left(\left(\underset{{B_{2R}}}{\int}|u_n-u_{\epsilon,\beta}|^{Nt'} \dx\right)^{\frac{1}{N}}\right)^\frac{1}{t'}\rightarrow0 ~\text{as} ~n\rightarrow+\infty.
\end{align}
From equations \eqref{3.7}, \eqref{eq3.9}--\eqref{eq3.11}, we have
\begin{equation*}
    \underset{n\rightarrow+\infty}{{\lim}}\underset{B_{2R}}{\int}|\nabla u_n-\nabla u_{\epsilon,\beta}|^N\dx=0.
\end{equation*}
Since $R>0$ was arbitrary, we get $\nabla u_n(x)\rightarrow \nabla u_{\epsilon,\beta}(x)$ a.e. in $\mathbb{R}^N.$
      \end{proof}
\begin{lemma}\label{lemma3.4}
       Assume that \ref{V1} and \ref{g2} hold. 
       Then mountain pass level $c_{\epsilon,\beta}$ satisfies
       \begin{equation}\label{eq3.12}
           0<c_{\epsilon,\beta}<c_0=:\frac{N-p}{pN}\left(\frac{\alpha_N}{\alpha_0}\left(\min\{1,V_0\}\right)^\frac{N^2}{N-1}-M^\frac{N}{N-1}\right)^\frac{p(N-1)}{N}
       \end{equation}
where, $M={\max}\{M_1,M_2\}$  such that $\|u_n\|_{\mathbf{X}_{\epsilon}} \leq M_1$ and we choose $M_2$ such that $\frac{\alpha_N}{\alpha_0}\left(\min\{1,V_0\}\right)^N<M_2$. Moreover the sequence $\{u_n\}$ satisfies
       \begin{equation}\label{eqn3.21}
           \underset{n\rightarrow+\infty}{{\limsup}}\|u_n\|_{W^{1,N}}^{N'}<\frac{\alpha_N}{\alpha_0}. 
       \end{equation} \end{lemma}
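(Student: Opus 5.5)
We will establish the two inequalities in \eqref{eq3.12} separately and then deduce \eqref{eqn3.21} from the upper one.

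\textbf{Positivity.} Take any $\gamma\in\Gamma_{\epsilon,\beta}$. Since $I_{\epsilon,\beta}(\gamma(1))<0<r$, Lemma \ref{lemma 3.1}(ii) shows that $\|\gamma(1)\|_{\mathbf{X}_{\epsilon}}>\varrho$. By continuity, $\gamma$ crosses the sphere $\{\|u\|_{\mathbf{X}_{\epsilon}}=\varrho\}$. Hence $\max_{t}I_{\epsilon,\beta}(\gamma(t))\ge r$, and so $c_{\epsilon,\beta}\ge r>0$.

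\textbf{Upper bound.} We test the minimax with a fixed path. Fix $\eta\in C_c^\infty(\mathbb{R}^N)\setminus\{0\}$, $\eta\ge0$, with $\operatorname{supp}\eta\subset\Lambda_\epsilon$. On the support $g(\epsilon x,\cdot)=f$, and \ref{g2} gives $G(\epsilon x,t)\ge\zeta t^{N+1}$. The Heaviside cut-off enters through
\[
G_H(\epsilon x,t)=G(\epsilon x,t)-G(\epsilon x,\beta)\ \text{ for } t\ge\beta,
\]
and $G(\epsilon x,\beta)\le C\beta^N$ by \ref{g3}, so the error is $O(\beta^N|\operatorname{supp}\eta|)$. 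Consequently
\[
c_{\epsilon,\beta}\le\max_{t\ge0}\Big[\tfrac{t^p}{p}\|\eta\|^p_{W^{1,p}_{V_\epsilon}}+\tfrac{t^N}{N}\|\eta\|^N_{W^{1,N}_{V_\epsilon}}-\zeta t^{N+1}\!\int\eta^{N+1}\Big]+C\beta^N ,
\]
where $t_0\eta$ with $t_0$ large serves as the endpoint (Lemma \ref{lemma 3.1}(iii)).

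The right-hand side is bounded above by a quantity that tends to $0$ as $\zeta\to\infty$. For a fixed $\zeta$, however, I would instead exploit the freedom in $\eta$. Take the scaled family $\eta_\sigma(x)=\eta(x/\sigma)$ with small $\sigma$, or use that $\Lambda_\epsilon$ grows as $\epsilon\to0$, and let the weight $\|\eta\|$ shrink relative to $\zeta\int\eta^{N+1}$ via a $\zeta$-dependent scaling. Elementary maximization of $at^p+bt^N-ct^{N+1}$ then makes the maximum smaller than $c_0$. In the statement this is mediated by $\zeta$ being implicitly "large enough", which is how such papers (cf.\ \cite{Alves-2014}) proceed. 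I would therefore state explicitly that $\zeta$ in \ref{f2} is assumed large so that $c_{\epsilon,\beta}<c_0$.

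\textbf{Deducing \eqref{eqn3.21}.} Let $\{u_n\}$ be the bounded Cerami sequence from Lemma \ref{lemma 3.2}, with bound $M_1$. Using $\langle\omega_n,u_n\rangle=o_n(1)$ together with $I_{\epsilon,\beta}(u_n)\to c_{\epsilon,\beta}$, I would form $I_{\epsilon,\beta}(u_n)-\frac1N\langle\omega_n,u_n\rangle$. Since $\rho_nu_n-NG_H\ge0$ by \ref{g4}, and $\rho_nu_n\ge g(\epsilon x,u_n)u_n$ on $[u_n>\beta]$ up to the measure-zero set $\{u_n=\beta\}$, this yields
\[
\big(\tfrac1p-\tfrac1N\big)\|u_n\|^p_{W^{1,p}_{V_\epsilon}}\le c_{\epsilon,\beta}+o_n(1).
\]
The $N$-part is controlled through $\min\{1,V_0\}$ and $M$: from $\|u_n\|_{W^{1,N}}^{N'}\le \min\{1,V_0\}^{-N'/N}\|u_n\|^{N'}_{W^{1,N}_{V_\epsilon}}$, the bound $\|u_n\|\le M$, and the explicit form of $c_0$, the inequality $c_{\epsilon,\beta}<c_0$ rearranges into $\limsup\|u_n\|_{W^{1,N}}^{N'}<\alpha_N/\alpha_0$.

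\textbf{Main obstacle.} The main obstacle is the upper bound $c_{\epsilon,\beta}<c_0$. The level $c_0$ involves $M$, which itself depends on the sequence, so the argument risks circularity. I would first try to fix $M_1$ a priori, independently of $\zeta$, by bounding Cerami sequences at levels below a fixed constant, and only then choose $\zeta$ (or the test function) to push $c_{\epsilon,\beta}$ below $c_0$.
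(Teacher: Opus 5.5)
The genuine gap is in the deduction of \eqref{eqn3.21}. In $I_{\epsilon,\beta}(u_n)-\frac1N\langle\omega_n,u_n\rangle$ the $W^{1,N}_{V_\epsilon}$-term cancels exactly. So \ref{g4} gives only $\bigl(\frac1p-\frac1N\bigr)\|u_n\|^p_{W^{1,p}_{V_\epsilon}}\le c_{\epsilon,\beta}+o_n(1)$, and the smallness of $c_{\epsilon,\beta}$ never reaches the $N$-part. All you have on the $N$-part is $\|u_n\|_{W^{1,N}_{V_\epsilon}}\le M$, because an \emph{upper} bound on the $p$-part cannot be subtracted from $\|u_n\|_{\mathbf X_\epsilon}\le M$. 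The paper does exactly this subtraction, writing $\limsup\|u_n\|_{W^{1,N}_{V_\epsilon}}\le M-\bigl(\frac{c_{\epsilon,\beta}Np}{N-p}\bigr)^{1/p}$, so following it does not close the gap.

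Hence your ``rearranges'' step yields \eqref{eqn3.21} only under $\min\{1,V_0\}^{-\frac{1}{N-1}}M^{N'}<\frac{\alpha_N}{\alpha_0}$, which does not involve $c_{\epsilon,\beta}$ at all. In fact this already follows from $M^{N'}<\frac{\alpha_N}{\alpha_0}\min\{1,V_0\}^{\frac{N^2}{N-1}}$, which is the condition needed for $c_0$ to be a positive real number. So $c_{\epsilon,\beta}<c_0$ adds nothing. Moreover, nothing makes $M$ that small. $M_1$ comes from the non-quantitative contradiction argument of Lemma~\ref{lemma 3.2}. And $M\ge M_2>\frac{\alpha_N}{\alpha_0}\min\{1,V_0\}^N$ is even incompatible with $c_0>0$ unless $\alpha_N<\alpha_0$.

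So the circularity you flag is secondary: fixing $M_1$ a priori would not help. What is missing is an inequality that keeps a positive multiple of $\|u_n\|^N_{W^{1,N}_{V_\epsilon}}$. One option is an Ambrosetti--Rabinowitz condition $\theta F(t)\le f(t)t$ with $\theta>N$, together with the bounds $NG(x,t)\le g(x,t)t\le\frac{V_0}{k}t^N$ that the penalization already provides off $\Lambda$. This gives $\bigl(\frac1N-\frac1\theta\bigr)\bigl(1-\frac1k\bigr)\|u_n\|^N_{W^{1,N}_{V_\epsilon}}\le c_{\epsilon,\beta}+o_n(1)$, hence \eqref{eqn3.21} below an explicit threshold independent of $M$. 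Without AR, the level has to be exploited through the weak limit and Lions' improved Moser--Trudinger inequality, as in \cite{Lam-2013}.

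The rest is in line with the paper. Your upper bound follows the paper's route: it also just takes $\zeta$ large along $t\xi$. Your treatment of the Heaviside term via $G_H(\epsilon x,t)\ge G(\epsilon x,t)-G(\epsilon x,\beta)$ is more accurate than the paper's, which applies \ref{g2} to $G_H$ directly. However, it forces $\beta$ to be small as well, since $G(\cdot,\beta)\ge\zeta\beta^{N+1}$ does not shrink as $\zeta$ grows.

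Drop the idea of rescaling $\eta$. Since $V\ge V_0$ and $0\le G_H(\epsilon x,t)\le F(t)$, you have $I_{\epsilon,\beta}\ge I_{V_0}$. Hence $\max_{t\ge0}I_{\epsilon,\beta}(t\eta)\ge c_{\epsilon,\beta}\ge c_{V_0}>0$ for every $\eta$, so only the size of $f$ can lower the level.

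Your positivity argument is fine and fills in what the paper only asserts as $0\le c_{\epsilon,\beta}$. It does need the fact that the estimate in the proof of Lemma~\ref{lemma 3.1}(ii) gives $I_{\epsilon,\beta}\ge0$ on the whole ball $\|u\|_{\mathbf X_\epsilon}\le\varrho$, not just on the sphere.
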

       \begin{proof}
           Let $\xi\in C_{c}^\infty(\mathbb{R}^N)$ such that $0\leq\xi\leq1$,
           $$\xi\equiv 1~\text{in}~B_{\frac{1}{2}} \text{ and }
               \xi\equiv 0~\text{in}~B_{1}^c,$$
           and $m(\{t\xi>\beta\})>0$ for $t>0.$ For such $\xi$, we get
           \begin{equation*}
             I_{\epsilon,\beta}(t\xi) =\frac{t^p}{p}\|\xi\|_{W^{1,p}_{V_{\epsilon}}}^p+\frac{t^N}{N}\|\xi\|_{W^{1,N}_{V_{\epsilon}}}^N-\underset {\{t\xi>\beta\}}{\int}G_{H}(\epsilon x,t\xi)\dx  .
           \end{equation*}
           From assumption \ref{g2}, we have $\zeta>0$ such that
           \begin{equation*}
           I_{\epsilon,\beta}(t\xi)\leq\frac{t^p}{p}\|\xi\|_{W^{1,p}_{V_{\epsilon}}}^p+\frac{t^N}{N}\|\xi\|_{W^{1,N}_{V_{\epsilon}}}^N-\zeta t^{N+1}\underset {\mathbb{R}^N}{\int} \ |\xi|^{N+1} \dx  
           \end{equation*}
       There is $\zeta_1$ such that $t\xi\in\Gamma$ for all $\zeta>\zeta_1$ and using the elementary calculus, we obtain 
        \begin{equation*}
            0\leq c_{\epsilon,\beta}\leq\underset{t\in[0,1]}{\text{max}} ~I_{\epsilon,\beta}(t\xi)=A\left(\frac{A}{B(N+1)\zeta}\right)^\frac{p}{N-p+1}\left(\frac{1}{p}-\frac{1}{N+1}\right).
        \end{equation*}
        where $A=\|\xi\|_{W^{1,p}_{V_{\epsilon}}}^p+\|\xi\|_{W^{1,N}_{V_{\epsilon}}}^N$ and $B=\underset {\mathbb{R}^{N}}{\int}|\xi|^{N+1}dx.$ Hence $A\left(\frac{A}{B(N+1)\zeta}\right)^\frac{p}{N-p+1}\left(\frac{1}{p}-\frac{1}{N+1}\right) \rightarrow0 $ as $\zeta\rightarrow+\infty$. Due to this, there is a $\zeta_0>0$ such that for all $\zeta>\zeta_0>\zeta_1$, \eqref{eq3.12} holds. As $n\rightarrow+\infty$, we have
        $$c_{\epsilon,\beta}+o_n(1)\geq\left(\frac{1}{p}-\frac{1}{N}\right)\|u_n\|_{W_{V_{\epsilon}}}^p$$
        $$\underset{n\rightarrow+\infty}{{\limsup}}\|u_n\|_{{W^{1,p}_{V_{\epsilon}}}}\leq\left(\frac{c_{\epsilon,\beta}Np}{N-p}\right)^\frac{1}{p}.$$
        Since the sequence $\{u_n\}$ is bounded in $\mathbf{X}_{\epsilon},$ we deduce
        $$ \|u_n\|_{{W^{1,p}_{V_{\epsilon}}}}+\|u_n\|_{{W^{1,N}_{V_{\epsilon}}}}\leq M \text{  and  } \underset{n\rightarrow+\infty}{{\limsup}}\|u_n\|_{{W^{1,N}_{V_{\epsilon}}}}\leq M-\left(\frac{c_{\epsilon,\beta}Np}{N-p}\right)^\frac{1}{p}.$$
       Due to assumption \ref{V1}, {by embedding $W_{V_{\epsilon}}^{1,N} (\mathbb{R}^N)\hookrightarrow W^{1,N}(\mathbb{R}^N)$}, it follows that
        \begin{equation*}
            \begin{split}
              \underset{n\rightarrow+\infty}{\limsup} \|u_n\|_{W^{1,N}}^\frac{N}{N-1}\leq\left(\left(\text{min}\{1,V_0\}\right)^{-N} \underset{n\rightarrow+\infty}{{\limsup}}\|u_n\|_{{W^{1,N}_{V_{\epsilon}}}}\right)^\frac{N}{N-1}\\
              =\left(\left(\text{min}\{1,V_0\}\right)^\frac{-N^2}{N-1}\left(M-\left(\frac{c_{\epsilon,\beta}Np}{N-p}\right)^\frac{1}{p} \right) \right)^\frac{N}{N-1}\\
              \leq \left(\text{min}\{1,V_0\}\right)^\frac{-N^2}{N-1}\left( M^\frac{N}{N-1}+\left( \frac{c_{\epsilon,\beta}Np}{N-p}\right)^\frac{N}{p(N-1)}\right). 
            \end{split}
        \end{equation*}
        Hence, we obtain \eqref{eq3.12} and conclude the proof.
       \end{proof}
       \begin{lemma}\label{tight}
         Let $\{u_n\}$ be a Cerami sequence for $I_{\epsilon,\beta}.$ Then for all $\xi>0$, there exist $R=R(\xi)>0$ such that 
         \begin{equation}
             \underset{n\rightarrow+\infty}{{\limsup}} \sum_{t\in\{p,N\}}\left(\underset{B_R^c}{\int} |\nabla u_n|^{t}+V(\epsilon x)|u_n|^{t}\right)\dx<\xi.
         \end{equation}
       \end{lemma}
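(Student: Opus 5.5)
The plan is the standard del Pino--Felmer tightness argument: test the Cerami condition against a cut-off of $u_n$ supported outside a large ball. Fix $\xi>0$. Since $\Lambda$ is bounded, choose $R$ large so that $\Lambda_\epsilon\subset B_{R/2}$. Take $\eta_R\in C^\infty(\mathbb{R}^N)$ with $0\le\eta_R\le1$, $\eta_R\equiv0$ in $B_{R/2}$, $\eta_R\equiv1$ in $B_R^c$, and $|\nabla\eta_R|\le C/R$.

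By Lemma \ref{lemma 3.2}, $\{u_n\}$ is bounded in $\mathbf{X}_{\epsilon}$, so $\{\eta_R u_n\}$ is bounded as well. Hence the Cerami condition gives $\omega_n\in\partial I_{\epsilon,\beta}(u_n)$ with $\langle\omega_n,\eta_Ru_n\rangle=o_n(1)$. As in the proof of Lemma \ref{lemma 3.3}, $\omega_n=Q_\epsilon'(u_n)-\rho_n$, where $\rho_n(x)\in[\underline{g_H}(\epsilon x,u_n),\overline{g_H}(\epsilon x,u_n)]$ a.e. Expanding this identity gives
\begin{align*}
\sum_{t\in\{p,N\}}\int_{\mathbb{R}^N}\big(|\nabla u_n|^t+V(\epsilon x)|u_n|^t\big)\eta_R\,\dx
={}&-\sum_{t\in\{p,N\}}\int_{\mathbb{R}^N}|\nabla u_n|^{t-2}u_n\nabla u_n\cdot\nabla\eta_R\,\dx\\
&+\int_{\mathbb{R}^N}\rho_n u_n\eta_R\,\dx+o_n(1).
\end{align*}

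The right-hand side is handled in two pieces.

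\textbf{Gradient term.} By H\"older's inequality and the bound on $\nabla\eta_R$, this term is at most $\frac{C}{R}\sum_t\|\nabla u_n\|_t^{t-1}\|u_n\|_t\le \frac{C}{R}$, using boundedness in $\mathbf{X}_\epsilon$ and $V\ge V_0$.

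\textbf{Nonlinear term.} On the support of $\eta_R$ we have $\epsilon x\notin\Lambda$, so $g(\epsilon x,\cdot)=\tilde f$ there. Moreover $0\le\rho_n u_n\le \tilde f(u_n)u_n$ wherever $u_n\ge\beta>0$ (in the case $u_n=\beta$, use $\rho_n\in[0,g(\epsilon x,\beta)]$), and $\rho_n=0$ where $u_n<\beta$. By \ref{f4} and the choice $f(a)/a^{N-1}=V_0/k$, we get $\tilde f(s)s\le \frac{V_0}{k}s^N$ for all $s>0$. Consequently
\[
\int\rho_nu_n\eta_R\,\dx\le \frac1k\int V(\epsilon x)|u_n|^N\eta_R\,\dx.
\]
Since $k>1$ can be assumed (choose $k$ large in the construction), this term is absorbed into the left side with factor $1-\frac1k>0$.

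Combining the two estimates gives
\[
\Big(1-\tfrac1k\Big)\limsup_{n}\sum_t\int_{B_R^c}\big(|\nabla u_n|^t+V(\epsilon x)|u_n|^t\big)\,\dx\le \frac{C}{R},
\]
and choosing $R$ large makes the right side less than $(1-\frac1k)\xi$. This finishes the argument.

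The main obstacle I expect is making the nonsmooth step rigorous. One has to check that the Clarke subgradient element indeed has the form $Q'_\epsilon(u_n)-\rho_n$ with $\rho_n$ in the stated interval, and that testing against $\eta_Ru_n\in\mathbf{X}_\epsilon$ is legitimate. The paper asserts this representation in Lemma \ref{lemma 3.3} via the Orlicz duality of Lemmas \ref{lemma 2.5} and \ref{2.6}, so I would invoke it in the same way. A secondary point is ensuring the inequality $\tilde f(s)s\le \frac{V_0}{k}s^N$ for $0<s<a$, which follows from the monotonicity in \ref{f4}.
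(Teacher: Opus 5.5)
Your proposal is correct and follows essentially the same route as the paper: test the Cerami condition against the cut-off $\tilde\psi_R u_n$ with $\Lambda_\epsilon\subset B_{R/2}$, bound the cut-off gradient term by $C/R$, and use $g(\epsilon x,\cdot)=\tilde f$ off $\Lambda_\epsilon$ together with $\tilde f(s)s\le \frac{V_0}{k}s^N$ to absorb the nonlinear term with the factor $1-\frac1k$. You also spell out details the paper leaves implicit, namely the derivation of that inequality from \ref{f4} and the behaviour of $\rho_n$ on $\{u_n<\beta\}$ and $\{u_n=\beta\}$.
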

       \begin{proof}
           For $R>0$, consider $\tilde\psi_R\in C_c^{\infty}(\mathbb{R}^N)$ such that $0\leq\tilde\psi_R\leq1$ with $$\tilde\psi_R\equiv0~\text{in}~B_{\frac{R}{2}}~\text{and} ~\tilde\psi_R\equiv1~\text{in}~B_{R}^c,$$
           and $|\nabla\tilde\psi_R|\leq\frac{C}{R}$, where $C>0$ is a constant independent of $R>0.$ Next, choose $R>0$ such that $\Lambda_\epsilon\subset B_{\frac{R}{2}}.$ The boundedness of $\{u_n\}_{n\geq1}$ in $\boldsymbol{X_\epsilon}$ implies that $\{u_n\tilde\psi_R\}_{n\geq1}$ is also bounded in $\boldsymbol{X_\epsilon}$. As $n\rightarrow+\infty$, from \ref{CC}, it follows that 
           \begin{equation*}
           \begin{split}
               \langle w_n, u_n\tilde\psi_R\rangle=o_n(1),\\
               \langle Q'(u_n)-\rho_n,u_n\tilde\psi_R \rangle=o_n(1),\\
               \langle Q'(u_n),u_n\tilde\psi_R\rangle-\langle \rho_n,u_n\tilde\psi_R\rangle=o_n(1).
               \end{split}
           \end{equation*}
           where $w_n\in\partial I_{\epsilon,\beta}(u_n)=Q'(u_n)-\rho_n.$ Further, we deduce
           \begin{equation*}
              \sum_{t\in\{p,N\}}\underset{\mathbb{R}^N}{\int}|\nabla u_n|^{t-2}\nabla u_n\cdot \nabla (\tilde{\psi}_Ru_n)+V(\epsilon x)|u_n|^{t-2}u_n(\tilde \psi_R u_n)\dx=o_n(1)+\underset{\mathbb{R}^N}{\int}\rho_nu_n\tilde{\psi}_R\dx,\end{equation*}
              and this implies 
              \begin{equation*}
              \begin{split}
             \sum_{t\in\{p,N\}}\underset{\mathbb{R}^N}{\int} |\nabla u_n|^{t-2}(\nabla u_n\cdot\nabla u_n)\tilde\psi_R +|\nabla u_n|^{t-2}u_n\nabla u_n\cdot\nabla\tilde\psi_R+V(\epsilon x)|u_n|^{t-2}u_n(\tilde\psi_R u_n)\dx\\=o_n(1)+\underset{\mathbb{R}^N}{\int}\rho_nu_n\tilde{\psi}_R\dx.
             \end{split}
             \end{equation*}
             Further, we have
             \begin{align*}
             \sum_{t\in\{p,N\}}\underset{\mathbb{R}^N}{\int} |\nabla u_n|^{t-2}(\nabla u_n\cdot\nabla u_n)\tilde\psi_R \dx &+\sum_{t\in\{p,N\}}\underset{\mathbb{R}^N}{\int}|\nabla u_n|^{t-2}u_n\nabla u_n\cdot\nabla\tilde\psi_R\\ &+V(\epsilon x)|u_n|^{t-2}u_n(\tilde\psi_R u_n)\dx=o_n(1)+\underset{\mathbb{R}^N}{\int}\rho_nu_n\tilde{\psi}_R\dx.
           \end{align*}
           Since 
           \begin{equation*}
           \underset{\mathbb{R}^N}{\int}\rho_nu_n\tilde{\psi}_R\dx\leq\underset{\mathbb{R}^N}{\int}\overline{g_H}(\epsilon x,u_n)u_n\tilde{\psi}_R\dx\leq\underset{\mathbb{R}^N}{\int}g(\epsilon x,u_n)u_n\tilde{\psi}_R\dx,
           \end{equation*}
           we obtain
           \begin{align*}
               \sum_{t\in\{p,N\}}\underset{\mathbb{R}^N}{\int} |\nabla u_n|^{t-2}(\nabla u_n\cdot\nabla u_n)\tilde\psi_R &+V(\epsilon x)|u_n|^{t-2}u_n(\tilde\psi_R u_n)\dx \\
               &\leq\sum_{t\in\{p,N\}}\underset{\mathbb{R}^N}{\int}|\nabla u_n|^{t-2}|u_n||\nabla u_n||\nabla\tilde\psi_R|\dx\\ &\quad+\underset{\mathbb{R}^N}{\int}g(\epsilon x,u_n)u_n\tilde{\psi}_R\dx+o_n(1)\\
               &\leq\frac{C}{R} \sum_{t\in\{p,N\}}\underset{\mathbb{R}^N}{\int}|\nabla u_n|^{t-1}|u_n|\dx+\underset{\mathbb{R}^N}{\int}g(\epsilon x,u_n)u_n\tilde{\psi}_R\dx\\&\quad+o_n(1).
               \end{align*}
               Using the fact that $\Lambda_\epsilon\subset B_{\frac{R}{2}}$ with definition in  \ref{defg} and boundedness of $u_n\in\boldsymbol{X_{\epsilon}}$ with H\"older's inequality, there exist a constant $C_1>0$ such that we have 
               \begin{equation*}
                  \left(1- \frac{1}{k}\right)\sum_{t\in\{p,N\}}\left(\underset{\mathbb{R}^N}{\int} |\nabla u_n|^{t}+V(\epsilon x)|u_n|^{t}\right)\tilde\psi_R\dx
               \leq\frac{C_1}{R},
               \end{equation*}
               So choose $R>0$ large enough such that 
               \begin{equation*}
                  \underset{n\rightarrow+\infty}{{\limsup}} \sum_{t\in\{p,N\}}\left(\underset{B_R^c}{\int} |\nabla u_n|^{t}+V(\epsilon x)|u_n|^{t}\right)\dx
               \leq\frac{C_1}{R}<\xi.
               \end{equation*} This completes the proof.
       \end{proof}
       \subsection{Existence of Critical Point of the Energy Functional Corresponding to the Auxiliary Problem}We begin this subsection by proving the existence of nontrivial critical points for the energy functional corresponding to the problem \eqref{Auxiliary problem}.
       

     \begin{lemma}\label{lemma3.5}
      Assume that \ref{V1}, \ref{g1}-\ref{g3}, and \ref{g5} hold. Then the sequence $\{u_n\}$ satisfies non-smooth Cerami $c$-condition for $0< c<c_0$ and $u_{\epsilon,\beta}$ is a non trivial weak solution for problem \eqref{Auxiliary problem}. Here $c_0$ is the same number defined in Lemma \ref{lemma3.4}. \end{lemma}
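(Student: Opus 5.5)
We will show that a Cerami sequence $\{u_n\}$ at a level $c\in(0,c_0)$ has a strongly convergent subsequence, and that its limit $u_{\epsilon,\beta}$ is a nontrivial weak solution.

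\emph{Weak and a.e.\ limits.} By Lemma \ref{lemma 3.2}, $\{u_n\}$ is bounded in $\mathbf{X}_{\epsilon}$. Since $c<c_0$, Lemma \ref{lemma3.4} gives \eqref{eqn3.21}. Lemma \ref{lemma 3.3} then yields, up to a subsequence, $u_n\rightharpoonup u_{\epsilon,\beta}$ in $\mathbf{X}_{\epsilon}$, $u_n\to u_{\epsilon,\beta}$ a.e., and $\nabla u_n\to\nabla u_{\epsilon,\beta}$ a.e.

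\emph{Passing to the limit in the inclusion.} Pick $\omega_n=Q_\epsilon'(u_n)-\rho_n\in\partial I_{\epsilon,\beta}(u_n)$ with $\|\omega_n\|\to 0$. By the growth bound \eqref{eqn3.101} and the uniform Moser--Trudinger control from \eqref{eqn3.21} with some $t>1$, $\{\rho_n\}$ is bounded in $L^{\tilde\Phi_1}(\mathbb{R}^N)$; here Lemma \ref{lemma 2.5}(i) estimates $\tilde\Phi_1(|\rho_n|)$ by $\Phi_1$-type integrals of $u_n$. Since $\tilde\Phi_1\in\Delta_2$ and $L^{\tilde\Phi_1}=(E_{\Phi_1})^*$, we get $\rho_n\overset{\ast}{\rightharpoonup}\rho_0$, up to a subsequence. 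The a.e.\ convergence of gradients gives $|\nabla u_n|^{t-2}\nabla u_n\rightharpoonup|\nabla u_{\epsilon,\beta}|^{t-2}\nabla u_{\epsilon,\beta}$ in $L^{t'}$. Passing to the limit in $\langle\omega_n,v\rangle\to0$ for $v\in C_c^\infty$, and then by density for all $v\in\mathbf{X}_{\epsilon}$, gives the weak formulation with $\rho_0$. To place $\rho_0(x)\in[\underline{g_H},\overline{g_H}](\epsilon x,u_{\epsilon,\beta}(x))$, we use Lemma \ref{lemma2.4} applied to $\Upsilon_{\epsilon,\beta}$, once strong convergence is known. Alternatively, we argue pointwise: the map $t\mapsto[\underline{g_H},\overline{g_H}](t)$ is upper semicontinuous with convex values, and weak limits preserve this (Mazur's lemma).

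\emph{Strong convergence.} This is the main obstacle. We would use the localized Simon-inequality argument of Lemma \ref{lemma 3.3} globally. Test $\omega_n-\omega$, with $\omega\in\partial I_{\epsilon,\beta}(u_{\epsilon,\beta})$ from the previous step, against $u_n-u_{\epsilon,\beta}$. The monotone operator terms then dominate $\|u_n-u_{\epsilon,\beta}\|^t_{W^{1,t}_{V_\epsilon}}$ for $t\in\{p,N\}$. We must show $\int(\rho_n-\rho_0)(u_n-u_{\epsilon,\beta})\,\mathrm{d}x\to0$, and we split this over $B_R$ and $B_R^c$.
\begin{itemize}
\item On $B_R$, the estimates $I_1,I_2$ of Lemma \ref{lemma 3.3} (Hölder with the Moser--Trudinger bound and compact local embeddings) give convergence to $0$.
\item On $B_R^c$, with $\Lambda_\epsilon\subset B_{R/2}$, we have $|g(\epsilon x,t)|\le \tau|t|^{N-1}+C|t|^{\nu-1}\Phi_{\alpha,N-2}(t)$. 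Lemma \ref{tight} makes the $\mathbf{X}_\epsilon$-mass of $u_n$ outside $B_R$ smaller than $\xi$ uniformly in $n$, and Hölder's inequality with the Moser--Trudinger bound then makes this tail integral $O(\xi)$ uniformly in $n$.
\end{itemize}
Letting $n\to\infty$ and then $\xi\to0$ gives $u_n\to u_{\epsilon,\beta}$ in $\mathbf{X}_{\epsilon}$, which proves the $(C)_c$ condition.

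\emph{Nontriviality.} By strong convergence and continuity of $I_{\epsilon,\beta}$, we get $I_{\epsilon,\beta}(u_{\epsilon,\beta})=c_{\epsilon,\beta}>0=I_{\epsilon,\beta}(0)$, so $u_{\epsilon,\beta}\neq0$. Lemma \ref{lemma2.4} also now applies directly to give $\rho_0\in\partial\Upsilon_{\epsilon,\beta}(u_{\epsilon,\beta})$, which yields the required pointwise inclusion.
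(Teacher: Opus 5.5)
Your proposal is correct. It shares the paper's overall architecture: boundedness from Lemma \ref{lemma 3.2}, a.e.\ convergence of gradients from Lemma \ref{lemma 3.3}, a weak$^*$ limit $\rho_0$ of $\{\rho_n\}$ in $L^{\tilde\Phi_1}(\mathbb{R}^N)$, a $B_R$/$B_R^c$ splitting handled by local compactness with Moser--Trudinger and by Lemma \ref{tight}, and Lemma \ref{lemma2.4} for the inclusion. The step that produces strong convergence, however, is done differently.

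The paper uses a Br\'ezis--Lieb splitting of $\|u_n\|^t_{W^{1,t}_{V_\epsilon}}$, which requires the a.e.\ convergence of $\nabla u_n$, to reach $\|u_n-u_{\epsilon,\beta}\|^p_{W^{1,p}_{V_\epsilon}}+\|u_n-u_{\epsilon,\beta}\|^N_{W^{1,N}_{V_\epsilon}}=\int\rho_nu_n\dx-\int\rho_0u_{\epsilon,\beta}\dx+o_n(1)$. You instead globalize the Simon inequality of Lemma \ref{lemma 3.3}, which is the standard $(S_+)$ argument. The quantity to control is the same, since $\int(\rho_n-\rho_0)(u_n-u_{\epsilon,\beta})\dx=\int\rho_nu_n\dx-\int\rho_0u_{\epsilon,\beta}\dx+o_n(1)$ by the weak and weak$^*$ convergences. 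Your route simply avoids Br\'ezis--Lieb.

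In fact your route does not need $\rho_0$ at that point at all. Since $\langle Q_\epsilon'(u_{\epsilon,\beta}),u_n-u_{\epsilon,\beta}\rangle\to0$ by weak convergence, it suffices to prove $\int\rho_n(u_n-u_{\epsilon,\beta})\dx\to0$. In particular, you should not describe $\omega$ as an element of $\partial I_{\epsilon,\beta}(u_{\epsilon,\beta})$ there, because you only establish that after strong convergence.

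For the tail, the penalization is simpler. On $B_R^c\subset\Lambda_\epsilon^c$ one has $0\le\rho_n\le\frac{V_0}{k}|u_n|^{N-1}$, so Lemma \ref{tight} gives the estimate at once. With the exponential growth bound you chose, you additionally need to interpolate between smallness of $u_n$ in $L^N(B_R^c)$ and uniform bounds in $L^q(\mathbb{R}^N)$, $q>N$, to make the Moser--Trudinger term small.

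Finally, your nontriviality argument via $I_{\epsilon,\beta}(u_{\epsilon,\beta})=c>0$ makes explicit a point the paper leaves implicit.
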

       \begin{proof}
          By the virtue of Lemma \ref{lemma 3.3} and 
 \ref{lemma3.4}, it follows that $\nabla u_n\rightarrow\nabla u_{\epsilon,\beta}$ a.e. in $\mathbb{R}^N$. For $t\in\{p,N\},$ the sequence $\{|\nabla u_n|^{t-2}\nabla u_n\}$  is bounded in $L^{\frac{t}{t-1}}(\mathbb{R}^N)$. So,
\begin{align*}
 \begin{split}
|\nabla u_n|^{t-2}\nabla u_n\rightharpoonup |\nabla u_{\epsilon,\beta}|^{t-2}\nabla u_{\epsilon,\beta} \text{ in } L^{\frac{t}{t-1}}(\mathbb{R}^N),
|\nabla u_n|^{t-2}\nabla u_n\rightarrow |\nabla u_{\epsilon,\beta}|^{t-2}\nabla u_{\epsilon,\beta} \text{ a.e. in } \mathbb{R}^N.
  \end{split}
 \end{align*}

Using density arguments for $t\in\{p,N\}$, one can prove that 
\begin{equation}\label{3.15}
\underset{n\rightarrow+\infty}{\lim}\underset{\mathbb{R}^N}{\int}|\nabla u_n|^{t-2}\nabla u_n \cdot\nabla v \dx = \underset{\mathbb{R}^N}{\int}|\nabla u_\epsilon|^{t-2}\nabla u_{\epsilon,\beta}\cdot\nabla v \dx, ~\forall v\in \mathbf{X}_{\epsilon},
\end{equation} and
\begin{equation}\label{3.16}
    \underset{n\rightarrow+\infty}{\lim}\underset{\mathbb{R}^N}{\int}V(x)| u_n|^{t-2} u_n  v \dx = \underset{\mathbb{R}^N}{\int}V(x)| u_{\epsilon,\beta}|^{t-2} u_{\epsilon,\beta} v \dx, ~~\forall v\in \mathbf{X}_{\epsilon}. 
    \end{equation}
{\bf Claim:} For each $\rho_n\in\partial_tG(\epsilon x,u_n(x))$, there is $\rho_0\in L^{\tilde{\Phi}_1}(\mathbb{R}^N)$ such that
\begin{equation} \label{eqn3.24}
\underset{\mathbb{R}^ N}{\int}\rho_n v \dx\rightarrow \underset{\mathbb{R}^ N}{\int}\rho_0 v \dx,~\forall v\in \mathbf{X}_{\epsilon}.
\end{equation}
{Note that $\Phi_1$ is an increasing and convex function and there is $\alpha>\alpha_0$ such that  
\begin{equation*}
\underset{\mathbb{R}^N}{\int}\tilde{\Phi}_{1}(\rho_n)\dx\leq \underset{\mathbb{R}^N}{\int}\tilde{\Phi}_{1}\left(\tau_1|u_n|^{N-1}+C_1(\tau)|u_n|^\nu\Phi_{N-2,\alpha}(u)\right)\dx,
\end{equation*}
\begin{equation*}
\underset{\mathbb{R}^N}{\int}\tilde{\Phi}(\rho_n)\dx\leq \frac{1}{2}\underset{\mathbb{R}^N}{\int}\varsigma(2\tau)\tilde{\Phi}_{1}(|u_n|^{N-1})\\dx+ \frac{\varsigma(2C_1(\tau))}{2}\underset{\mathbb{R}^N}{\int}\tilde{\Phi}_{1}\left(|u_n|^\nu\Phi_{1}\left(\alpha^\frac{N-1}{N}|u_n|\right)\right)\dx,
\end{equation*}}
By Lemma \ref{lemma 2.5}, there exist constant $C_1>0$ and $C_2>0$ such that 
\begin{equation*}
\underset{\mathbb{R}^N}{\int}\tilde{\Phi}(\rho_n)\dx\leq \frac{C_1}{2}\underset{\mathbb{R}^N}{\int}\varsigma (|u_n|^{N-1})\dx+ \frac{C_2}{2}\underset{\mathbb{R}^N}{\int}\varsigma(|u_n|^\nu{\Phi}_{1}(\alpha ^{\frac{N}{N-1}}|u_n|))\dx
\end{equation*}
 By using $\varsigma(t)=\text{max}\{t,t^N\}$. Now two cases arise: \\
 Case 1:
\begin{align}\label{3.18}
\underset{\mathbb{R}^N}{\int}\tilde{\Phi}_{1}(\rho_n)\dx &\leq C_1\left(\underset{\mathbb{R}^N}{\int}|u_n|^{N-1}\dx,\right)+C_2\left(\underset{\mathbb{R}^N}{\int}\left(|u_n|^{\nu}\right)\Phi_{1}\left(\alpha^\frac{N-1}{N}|u_n|\right)\dx\right),
\end{align}
 Case 2:
\begin{align}\label{3.19}
\underset{\mathbb{R}^N}{\int}\tilde{\Phi}_{1}(\rho_n)\dx &\leq C_1\left(\underset{\mathbb{R}^N}{\int}|u_n|^{N(N-1)}\dx\right) +C_2\left(\underset{\mathbb{R}^N}{\int}|u_n|^{N\nu}\Phi_{1}\left(\alpha^\frac{N-1}{N}|u_n|\right)\dx\right).
\end{align}
By H\"older's inequality and by Lemma \ref{lemma3.4}, we obtain
\begin{equation}\label{3.20}
\left(\underset{\mathbb{R}^N}{\int}\left(|u_n|^{\nu}\right)\Phi_{1}\left(\alpha^\frac{N-1}{N}|u_n|\right)\dx\right)\leq C_3,~~\forall n\in\mathbb{N}, 
\end{equation}
and 
\begin{equation}\label{3.20o}
\left(\underset{\mathbb{R}^N}{\int}\left(|u_n|^{N\nu}\right)\Phi_{1}\left(\alpha^\frac{N-1}{N}|u_n|\right)\dx\right)\leq C_4,~~\forall n\in\mathbb{N}. 
\end{equation}

Suppose case 1 holds, then by the definition of $ \varsigma$, $0<|u_n(x)|<1$. We have embedding
$$ \mathbf{X}_{\epsilon}\hookrightarrow W_{V_{\epsilon}}^{1,p}\hookrightarrow L^{q}(\mathbb{R}^N),~\forall q\in[p,p^*].$$
If $N-1<p^*$, we have
\begin{equation}\label{3.21} 
\|u_n\|_{N-1}^{N-1}\leq\|u_n\|_{\mathbf{X}_{\epsilon}}^{N-1}\leq C_5.
\end{equation}
and if $N-1>p^*$, we have
\begin{equation}\label{3.22}
    \|u_n\|_{N-1}^{N-1}\leq\|u_n\|_{p^*}^{p*}\leq C_6.
\end{equation}
For case 1, from equation \eqref{3.18}, \eqref{3.20}--\eqref{3.22}, we conclude that 
\begin{equation}\label{3.23}
   \underset{\mathbb{R}^N}{\int}\tilde{\Phi}_{1}(\rho_n)\dx\leq C_7,~~\forall n\in\mathbb{N.} 
\end{equation}
For case 2, from equation \eqref{3.19} and embedding of $\mathbf{X}_{\epsilon}$ in $L^s(\mathbb{R)^N}$ for $s\in[N,+\infty),$ we conclude that 
\begin{equation}\label{3.24}
    \underset{\mathbb{R}^N}{\int}\tilde{\Phi}_{1}(\rho_n)\dx\leq C_8,~\forall n\in\mathbb{N}. 
 \end{equation}
From equation \eqref{3.23} and \eqref{3.24}, it follows that $\{\rho_n\}_{n\geq1}$ is bounded in $L^{\tilde{\Phi}_1}(\mathbb{R}^N). $ So the sequence of functionals $\tilde{\rho}_n\subset\partial\Upsilon(u_n)\subset(E_{\Phi_1}(\mathbb{R}^N))^*$ corresponding to $\{\rho_n\}$ is also bounded in $(E_{\Phi_1}(\mathbb{R}^N))^*$. So there is $\tilde{\rho}_0\in (E_{\Phi_1}(\mathbb{R}^N))^*$ such that $\tilde{\rho}_n\overset{\ast}{\rightharpoonup}\tilde{\rho}_0$ in $(E_{\Phi_1}(\mathbb{R}^N))^*$ i.e.,
\begin{equation}\label{3.25}
\underset{\mathbb{R}^N}{\int} \rho_n v\dx=\langle \tilde{\rho}_n,v\rangle \rightarrow \langle \tilde{\rho}_0,v\rangle =\underset{\mathbb{R}^N}{\int} \rho_0 v\dx,~~\forall v\in\mathbf{X}_{\epsilon},
\end{equation}
for some $\rho_0\in L^{\tilde{\Phi}_{1}}(\mathbb{R}^N).$ From equation \eqref{3.15}, \eqref{3.16} and \eqref{3.25}, we have 
\begin{equation}\label{3.26}
  \sum_{t\in\{p,N\}}\underset{\mathbb{R}^N}{\int}|\nabla u_\epsilon|^{t-2}\nabla u_{\epsilon,\beta}\cdot\nabla v \dx ~+\underset{\mathbb{R}^N}{\int}V(x)| u_{\epsilon,\beta}|^{t-2} u_{\epsilon,\beta} v \dx=\underset{\mathbb{R}^N}{\int} \rho_0 v\dx, ~~\forall v\in \mathbf{X}_{\epsilon}.  
\end{equation}
Set $\upsilon=u_n-u_{\epsilon,\beta}$. Due to {Br\'ezis-Lieb-type results} (see \cite{Mahanta-2025}), we have
\begin{equation}\label{3.27}
\begin{cases}
\|u_n\|_{W_{V_{\epsilon}}^{1,p}}^p=\|u_{\epsilon,\beta}\|_{W_{V_{\epsilon}}^{1,p}}^p+\|\upsilon_n\|_{W_{V_{\epsilon}}^{1,p}}^p+o_n(1),\\
\|u_n\|_{W_{V_\epsilon}^{1,N}}^N=\|u_{\epsilon_\beta}\|_{W_{V_{\epsilon}}^{1,N}}^N+\|\upsilon_n\|_{W_{V_{\epsilon}}^{1,N}}^N+o_n(1).
\end{cases}
\end{equation}
So, 
\begin{equation}\label{3.28}
\|u_{\epsilon,\beta}\|_{W_{V_{\epsilon}}^{1,p}}^p+\|\upsilon_n\|_{W_{V_{\epsilon}}^{1,p}}^p+\|u_{\epsilon_\beta}\|_{W_{V_{\epsilon}}^{1,N}}^N+\|\upsilon_n\|_{W_{V_{\epsilon}}^{1,N}}^N-\underset{\mathbb{R}^N}{\int}\rho_nu_n\dx=o_n(1)
\end{equation}
and
\begin{equation}\label{3.29}
    \|u_{\epsilon,\beta}\|_{W_{V_{\epsilon}}^{1,p}}^p+\|u_{\epsilon_\beta}\|_{W_{V_{\epsilon}}^{1,N}}^N-\underset{\mathbb{R}^N}{\int}\rho_0u_{\epsilon,\beta}\dx=0.
\end{equation}
From equation \eqref{3.28} and \eqref{3.29}, we get
    \begin{equation*}
      o_n(1)= \|\upsilon_n\|_{W_{V_{\epsilon}}^{1,p}}^p+\|\upsilon_n\|_{W_{V_{\epsilon}}^{1,N}}^N-\underset{\mathbb{R}^N}{\int}\rho_nu_n \dx+\underset{\mathbb{R}^N}{\int}\rho_0u_{\epsilon,\beta}\dx. 
    \end{equation*}
    Now define $R:=\text{max}\{R_1,R_2\}$ where $R_1$ and $R_2$ are taken in \eqref{3.33} and \eqref{3.34}, respectively. For large enough $R>0$, we get
    \begin{equation}\label{3.30}
      o_n(1)= \|\upsilon_n\|_{W_{V_{\epsilon}}^{1,p}}^p+\|\upsilon_n\|_{W_{V_{\epsilon}}^{1,N}}^N-\underset{B_{R}}{\int}(\rho_nu_n-\rho_0u_{\epsilon,\beta})\dx-\underset{B_{R}^c}{\int}(\rho_nu_n-\rho_0u_{\epsilon,\beta})\dx.
    \end{equation}
    We have 
    \begin{equation*}
        \underset{B_{R}}{\int}(\rho_nu_n-\rho_0u_{\epsilon,\beta})\dx=\underset{B_{R}}{\int}\rho_n(u_n-u_{\epsilon,\beta})\dx+\underset{B_{R}}{\int}(\rho_n-\rho_0)u_{\epsilon,\beta}\dx
    \end{equation*} 
    Due to \eqref{3.25}, we deduce
    \begin{equation}\label{3.31}
      \underset{B_{R}}{\int}(\rho_n-\rho_0)u_{\epsilon,\beta}\dx\rightarrow0~\text{as}~n\rightarrow\infty.  
    \end{equation}
    Due to assumption \ref{g1}, H\"older's inequality, boundedness of $\{u_n\} ~\text{in}~{\mathbf{X}_{\epsilon}}$ and Compact Embedding of Sobolev spaces for a bounded domain, we have
    \begin{equation}\label{3.32}
        \underset{B_{R}}{\int}\rho_n(u_n-u_{\epsilon,\beta})\dx\rightarrow0 ~\text{as}~n\rightarrow\infty.
    \end{equation}
    Since $\{\rho_0u_{\epsilon,\beta}\}\in L^1(\mathbb{R}^N)$, so for large $R_1>0$ such that for any $\xi_1>0$, we have
    \begin{equation}\label{3.33}
    \underset{B_{R_1}^c}{\int}\rho_0u_{\epsilon,\beta}\dx<\xi_1.
     \end{equation}
      {From Lemma \ref{tight}, one can easily prove the tightness of the Cerami sequence $\{u_n\}_{n\geq 1}$ for $I_{\epsilon,\beta}.$} This implies that for any $\xi_2>0$, there is  $R_2>0$, and there holds
      \begin{equation}\label{3.34}
          \underset{B_{R_2}^c}{\int}\rho_nu_n\dx<\xi_2.
      \end{equation}
      From equations \eqref{3.30}--\eqref{3.34}, it follows that 
      \begin{equation*}
\|\upsilon_n\|_{W_{V_{\epsilon}}^{1,p}}^p+\|\upsilon_n\|_{W_{V_{\epsilon}}^{1,N}}^N\rightarrow0~\text{as}~n\rightarrow+\infty  .
      \end{equation*}
      This implies $\upsilon_n\rightarrow0$ in $\mathbf{X}_{\epsilon}$. Hence, $u_n\rightarrow u_{\epsilon,\beta}$ in $\mathbf{X}_{\epsilon}$. So,
      \begin{equation}\label{3.35}
      0\in\partial I_{\epsilon,\beta}(u_{\epsilon,\beta}).
      \end{equation}
      From equations \eqref{3.29}, \eqref{3.35} and Lemma \ref{lemma2.4}, it follows that $u_{\epsilon,\beta}$ is a non-trivial weak solution for \eqref{Auxiliary problem}.
       \end{proof}
       
     \section{Autonomous Problem}\label{section4}
     In this section, we consider an autonomous problem related to \eqref{main problem} and compare the critical level of both problems. We are motivated by the methods discussed in \cite[Fiscella and Pucci]{Fiscella-2021} and \cite[Costa and Figueiredo]{Costa-F-2022}. The autonomous problem related to \eqref{main problem} is as follows:
     \begin{equation}\label{Autonomous problem}
    \left\{
     \begin{aligned}
       &-\Delta_pu-\Delta_Nu~+V_0(|u|^{p-2}u+|u|^{N-2}u)=f(u)~~ \text{in}~~ \mathbb{R}^N,\\&
       ~u\in W^{1,p}_{V_0}({\mathbb{R}^N})\cap W^{1,N}_{V_0}({\mathbb{R}^N}).
    \end{aligned}\tag{$\mathscr{P}_{V_0}$}
    \right.
\end{equation}
Denote $\boldsymbol{Y}=W^{1,p}_{V_0}({\mathbb{R}^N})\cap W^{1,N}_{V_0}({\mathbb{R}^N}) $, with norm 
\begin{equation*}
    \|u\|_{\boldsymbol{Y}}=\|u\|_{W^{1,p}_{V_0}}+\|u\|_{W^{1,N}_{V_0}}
\end{equation*}
and $F(t)=\underset{0}{\int^t}f(s)\mathrm{d}s.$\\
The energy functional associated with \eqref{Autonomous problem} is defined as $I_{V_0}:W^{1,p}_{V_0}({\mathbb{R}^N})\cap W^{1,N}_{V_0}({\mathbb{R}^N})\rightarrow\mathbb{R} $
\begin{equation*}
    I_{V_0}(u)=\frac{1}{p}\underset {\mathbb{R}^N}{\int}(|\nabla u|^p+V_0|u|^p)\dx+\frac{1}{N}\underset {\mathbb{R}^N}{\int}(|\nabla u|^N+V_0|u|^N)\dx-\underset {\mathbb{R}^N}{\int}F(u)\dx.
\end{equation*}
Hence 
\begin{equation*}
   I_{V_0}(u) =\frac{1}{p}\|u\|_{W^{1,p}_{V_0}}^p+\frac{1}{N}\|u\|_{W^{1,N}_{V_0}}^N-\underset {\mathbb{R}^N}{\int}F(u)\dx.
\end{equation*}
$I_{V_0}$ is well defined and belongs to $ C^1(\boldsymbol{Y},\mathbb{R}).$ { Using the similar strategy in \cite[Lemma 2]{Lam-2013} and by invoking the Mountain Pass theorem for $C^1$ functional, we  obtained a Cerami sequence $\{\hat{u}_n\}_{n\geq1}\subset\boldsymbol{Y}$ i.e.
\begin{equation}\label{4.1}
    I_{V_0}(\hat{u}_n)\rightarrow c_{V_0}~~\text{and}~\|1+\hat{u}_n\|_{\boldsymbol{Y}
    }~I_{V_0}'(\hat{u}_n)\rightarrow0~~\text{as}~n\rightarrow+\infty.
\end{equation}
where $$c_{V_0}=\underset{\gamma\in \Gamma}{\inf}\underset{t\in [0,1]}{\max}I_{V_0}(\gamma(t))$$
and 
$$ \Gamma=\{\gamma\in C([0,1],\boldsymbol{Y}):\gamma(0)=0~,I_{\boldsymbol{V_0}}(\gamma(1))<0\}.$$}
By adapting the strategy found in {\cite[Lemma 5]{Lam-2013}}, one can prove the boundedness of $\{\hat{u}_n\}_{n\geq1}$ in $\boldsymbol{Y}$. So up to a subsequence still denoted by itself, we can assume that 
\begin{equation*}
     \begin{cases}
     \hat{u}_n\rightharpoonup u_0~\text{in~}~\boldsymbol{Y},\\
     \hat{u}_n(x)\rightarrow u_{0}(x)~\text{a.e. in~}~\mathbb{R}^N.
     \end{cases}
     \end{equation*}
    \subsection{Existence of Critical Point} In the following Lemma, we will show the existence of critical point for the functional corresponding to \ref{Autonomous problem}.
\begin{lemma}
    Let the assumption \ref{V1} and \ref{f1}-\ref{f5} hold. Then the function $u_0$ is a non-trivial critical point for the $I_{V_0}.$ \end{lemma}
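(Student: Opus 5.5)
We have a bounded Cerami sequence $\{\hat u_n\}$ for $I_{V_0}$ at level $c_{V_0}>0$ with $\hat u_n\rightharpoonup u_0$ in $\boldsymbol{Y}$. Two things must be shown: that $u_0$ is a critical point, and that $u_0\neq 0$.

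\textbf{Step 1 ($u_0$ is a critical point).} I follow the proof of Lemma \ref{lemma 3.3}, now in the smooth setting, so no subdifferential is needed.

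First I need the level to be subcritical. The estimate of Lemma \ref{lemma3.4} applies verbatim with $V(\epsilon x)$ replaced by $V_0$, via the test function $t\xi$ and \ref{f2}. It gives $c_{V_0}<c_0$, and hence
$$\limsup_n\|\hat u_n\|_{W^{1,N}}^{N'}<\alpha_N/\alpha_0 .$$

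Next, for a cutoff $\psi$ supported in $B_{2R}$, I test $I_{V_0}'(\hat u_n)$ against $(\hat u_n-u_0)\psi$ and use Simon's inequality.

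\emph{Gradient terms.} These are handled exactly as in \eqref{eq3.9}.

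\emph{Nonlinear term.} The integral $\int f(\hat u_n)(\hat u_n-u_0)\psi\,\dx\to0$ follows from the splitting \eqref{eq3.10}--\eqref{eq3.11}. This uses the growth bound $|f(t)|\le\tau|t|^{N-1}+C_1(\tau)|t|^{\nu-1}\Phi_{\alpha,N-2}(t)$ and the Moser--Trudinger bound of Lemma \ref{lemma2.2}, which is available because the $W^{1,N}$ norm stays below the threshold.

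This gives $\nabla\hat u_n\to\nabla u_0$ a.e. Then $|\nabla\hat u_n|^{t-2}\nabla\hat u_n\rightharpoonup|\nabla u_0|^{t-2}\nabla u_0$ weakly in $L^{t/(t-1)}$ for $t\in\{p,N\}$, and likewise for the lower-order terms.

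For the nonlinear term I show $\int f(\hat u_n)v\,\dx\to\int f(u_0)v\,\dx$ for $v\in C_c^\infty(\mathbb{R}^N)$. On $\mathrm{supp}\,v$, $f(\hat u_n)\to f(u_0)$ a.e. and $f(\hat u_n)$ is bounded in $L^{s}$ for some $s>1$, by the same Moser--Trudinger bound. Vitali's theorem then gives convergence in $L^1_{loc}$.

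Passing to the limit in $\langle I_{V_0}'(\hat u_n),v\rangle=o_n(1)$ yields $\langle I_{V_0}'(u_0),v\rangle=0$ for all $v\in C_c^\infty$. By density, $I_{V_0}'(u_0)=0$.

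\textbf{Step 2 ($u_0\neq0$).} This is the main obstacle, since on $\mathbb{R}^N$ weak limits can vanish. I argue by contradiction: suppose $u_0=0$.

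I first claim that $\hat u_n$ cannot vanish in the sense of Lions, i.e.
$$\liminf_n\sup_{y}\int_{B_1(y)}|\hat u_n|^N\,\dx>0 .$$
Suppose instead the supremum tends to $0$. Lions' lemma then gives $\hat u_n\to0$ in $L^q$ for $q>N$. Combined with the subcritical Moser--Trudinger bound and \ref{f3}, this yields $\int f(\hat u_n)\hat u_n\,\dx\to0$ and $\int F(\hat u_n)\,\dx\to0$. From $\langle I_{V_0}'(\hat u_n),\hat u_n\rangle=o_n(1)$ we then get $\|\hat u_n\|_{W^{1,p}_{V_0}}^p+\|\hat u_n\|_{W^{1,N}_{V_0}}^N\to0$, and so $I_{V_0}(\hat u_n)\to0\neq c_{V_0}$, a contradiction.

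So there exist $y_n\in\mathbb{R}^N$ with $\int_{B_1(y_n)}|\hat u_n|^N\,\dx\ge\eta>0$. Since the problem is autonomous, $I_{V_0}$ and $I_{V_0}'$ are invariant under translations. Hence $w_n=\hat u_n(\cdot+y_n)$ is again a bounded Cerami sequence at level $c_{V_0}$ with the same norm bound. Its weak limit $w$ satisfies $\int_{B_1}|w|^N\,\dx\ge\eta$, by compactness of the embedding on $B_1$, so $w\neq0$. Step 1 applies to $w_n$, and $w$ is a nontrivial critical point.

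Accordingly, I would read the statement as holding up to replacing $\hat u_n$ by this translated sequence, and rename $u_0:=w$. If desired, Fatou's lemma combined with \ref{f5} (through $\mathscr G\ge0$) gives $I_{V_0}(u_0)\le c_{V_0}$, which shows $u_0$ is a ground state; this feeds Lemma \ref{lemma2.3} later in the paper.
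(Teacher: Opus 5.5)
Your argument is correct, and it departs from the paper's in two places.

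\emph{Convergence of the nonlinear term.} The paper proves $\int f(\hat u_n)v\,\dx\to\int f(u_0)v\,\dx$ for every $v\in\boldsymbol{Y}$ at once. It does so through an $L^1$ bound, a tail estimate obtained by testing the equation with $v\psi_R$, and Vitali's theorem. You restrict to $v\in C_c^\infty(\mathbb{R}^N)$, obtain uniform integrability on $\mathrm{supp}\,v$ from an $L^s$ bound with $s>1$ (Moser--Trudinger below the threshold), and conclude by density. This is shorter. It also supplies what the paper's step ``bounded in $L^1$, hence uniformly integrable'' actually needs.

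\emph{Nontriviality.} This is the more substantive difference. The paper asserts that the argument of Lemma \ref{lemma3.5} gives $\hat u_n\to u_0$ strongly in $\boldsymbol{Y}$, hence $I_{V_0}(u_0)=c_{V_0}>0$. That argument, however, relies on the tightness Lemma \ref{tight}, whose proof uses the penalization: $g=\tilde f\le\frac{V_0}{k}|t|^{N-1}$ outside $\Lambda_\epsilon$, which produces the factor $1-\frac1k$. The functional $I_{V_0}$ has no such structure and is translation invariant. A Cerami sequence shifted off to infinity is therefore still a Cerami sequence, now with weak limit $0$. Your Lions-vanishing argument followed by translation is the standard way around this, and it is what actually establishes a nontrivial critical point.

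The price is that $u_0$ must be redefined as the weak limit of the translated sequence. This is harmless downstream: Lemma \ref{lemma4.2} only needs some nontrivial critical point at level $c_{V_0}$. Your closing Fatou remark provides one, using $\mathscr{G}\ge 0$ (which follows from \ref{f5} with $s=0$) together with \ref{f4}.

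Finally, both your proof and the paper's take the bound $\limsup_n\|\hat u_n\|_{W^{1,N}}^{N'}<\alpha_N/\alpha_0$ from Lemma \ref{lemma3.4}. You therefore inherit whatever that lemma requires; in particular, its proof needs the constant $\zeta$ in \ref{f2} to be large.
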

    \begin{proof}
      As $n \to \infty$, from \eqref{4.1}, it follows that 
      \begin{equation}\label{4.2}
          I_{V_0}(\hat{u}_n)=c_{V_0}+o_n(1) ~\text{and}~\langle I_{V_0}'(\hat {u}_n),\hat{u}_n\rangle=o_n(1).
      \end{equation}
     Using arguments as in Section \ref{section3}, as $n \to \infty$, for $v\in \boldsymbol{Y}$, it implies
      \begin{equation*}
          \nabla \hat u_n\rightarrow\nabla  u_0~~\text{a.e.}~\text{in~}\mathbb{R}^N
      \end{equation*}
     \begin{equation}\label{4.3}
         \sum_{t\in\{p,N\}}\underset{\mathbb{R}^N}{\int}|\nabla\hat{u}_n|^{t-2}\nabla\hat{u}_n\cdot\nabla v \dx\rightarrow\sum_{t\in\{p,N\}}\underset{\mathbb{R}^N}{\int}|\nabla{u}_0|^{t-2}\nabla{u_0}\cdot\nabla v \dx
     \end{equation}
     and 
     \begin{equation}\label{4.4}
      \sum_{t\in\{p,N\}}\underset{\mathbb{R}^N}{\int}V_0|\hat{u}_n|^{t-2}\hat{u}_n v \dx\rightarrow    \sum_{t\in\{p,N\}}\underset{\mathbb{R}^N}{\int}V_0|{u}_0|^{t-2}{u}_0 v \dx
     \end{equation}
     {\bf Claim:} $\underset{\mathbb{R}^N}{\int}f(\hat{u}_n)v \dx\rightarrow \underset{\mathbb{R}^N}{\int}f({u}_0)v \dx$ for all $v\in\boldsymbol{Y.}$\\
     Due to assumption \ref{f1}, \ref{f2}, Remark \ref{rem1.1} and Lemma \ref{lemma3.4}, the sequence $\{f(\hat{u}_n)v\}_{n\geq1}$ is bounded in $L^1(\mathbb{R}^N).$ This also implies that the sequence $\{f(\hat{u}_n)v\}$ is uniformly integrable over $\mathbb{R}^N$ and $f(\hat{u}_n)v\rightarrow f(u_0)v$ a.e. in $\mathbb{R}^N.$ Let $R>0$ such that $\psi_R\in C_c^{\infty}(\mathbb{R}^N)$, $0\leq\psi_R\leq1$ in $\mathbb{R}^N$  and
      \begin{equation}
       \psi_{R}\equiv0~\text{in}~B_{\frac{R}{2}} \text{ and }
       \psi_{R}\equiv1~\text{in}~B_{R}^c,
\end{equation}
with $|\nabla\psi_R|\leq\frac{C}{R}$, where $C$ is constant independent of $R.$ Consider $U=v\psi_R$, from equation \eqref{4.2}, as $n \to \infty,$ we have
\begin{align*}
\bigg|\underset{\mathbb{R}^N}{\int}f(\hat{u}_n)v\psi_R \dx \bigg|&\leq\bigg|\sum_{t\in\{p,N\}}\underset{\mathbb{R}^N}{\int}|\nabla\hat{u}_n|^{t-2}\nabla\hat{u}_n\cdot\nabla (v\psi_R) \dx+   \sum_{t\in\{p,N\}}\underset{\mathbb{R}^N}{\int}V_0|\hat{u}_n|^{t-2}\hat{u}_n( v\psi_R)\dx\bigg|\\ &\quad+o_n(1) \\
&=\bigg|\sum_{t\in\{p,N\}}\underset{\mathbb{R}^N}{\int}|\nabla\hat{u}_n|^{t-2}\nabla\hat{u}_n\cdot(\psi_R\nabla v+v\nabla\psi_R) \dx\\&\quad+   \sum_{t\in\{p,N\}}\underset{\mathbb{R}^N}{\int}V_0|\hat{u}_n|^{t-2}\hat{u}_n( v\psi_R)\dx\bigg|+o_n(1)\\
&\leq\sum_{t\in\{p,N\}}\underset{\mathbb{R}^N}{\int}|\nabla\hat{u}_n|^{t-2}|\nabla\hat{u}_n\cdot\nabla v|\dx+\frac{C}{R}\sum_{t\in\{p,N\}}\underset{\mathbb{R}^N}{\int}|\nabla\hat{u}_n|^{t-2}|\nabla\hat{u}_n||v| \dx\\&\quad+   \sum_{t\in\{p,N\}}\underset{\mathbb{R}^N}{\int}V_0|\hat{u}_n|^{t-2}|\hat{u}_n||v| \dx+o_n(1).
\end{align*}
Since $v\in\boldsymbol{Y}$, so for large enough $R>0$, for any given $\epsilon_1>0$ ~\text{and} $\epsilon_2>0$ we have $$\underset{B_R^c}{\int}|\nabla v|^t\dx<\epsilon_1~
\text{and} \underset{B_R^c}{\int}|v|^t\dx<\epsilon_2.
$$ Now using boundedness of $\{\hat{u}_n\}$, we have $\underset{B_R^c}{\int }f(\hat{u}_n)vdx<\xi^*$ for large enough $R>0$ and any $\xi^*>0$. 
This concludes that $\{f(\hat{u}_n)v\}_{n\geq1}$ is tight over $\mathbb{R}^N.$ Hence, by Vitali's Convergence theorem,
\begin{equation}\label{4.6}
\underset{\mathbb{R}^N}{\int}f(\hat{u}_n)v\dx\rightarrow \underset{\mathbb{R}^N}{\int}f({u}_0)v\dx~
~\text{for all}~v\in\boldsymbol{Y.}
\end{equation}
From equation \eqref{4.3},\eqref{4.4} and \eqref{4.6} it follows that $u_0$ is critical point of $I_{V_0} $ and using similar arguments in Lemma \ref{lemma3.5}, we can prove that $\hat{u}_n\rightarrow u_0~\text{in}~\boldsymbol{Y.}$ Hence, $u_0$ is non trivial critical point.
    \end{proof}
 \subsection{Relationship Between Both Mountain Pass Levels}
In the next lemma, we establish a relation between $c_{\epsilon,\beta}$ and $c_{V_0}$, which plays a crucial role in our arguments for proving the main theorem.
\begin{lemma}\label{lemma4.2}
Assume that \ref{V1} and \ref{f1}-\ref{f5} hold. Then $$\underset{\epsilon,\beta\rightarrow0}{{\lim}}c_{\epsilon,\beta}=c_{V_0}$$ where $c_{\epsilon,\beta}$ and $c_{V_0}$ are mountain pass level for \eqref{main problem} and \eqref{Autonomous problem}, respectively. \end{lemma}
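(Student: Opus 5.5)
I plan to prove the two inequalities $\liminf_{(\epsilon,\beta)\to(0,0)} c_{\epsilon,\beta}\geq c_{V_0}$ and $\limsup_{(\epsilon,\beta)\to(0,0)} c_{\epsilon,\beta}\leq c_{V_0}$ separately.

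\textbf{Lower bound.} I will compare the functionals pointwise. Since $V(\epsilon x)\geq V_0$ by \ref{V1}, the quadratic-type part $Q_\epsilon(u)$ dominates the corresponding part of $I_{V_0}$. For the nonlinear part I would use $G_H(\epsilon x,t)\leq G(\epsilon x,t)$, which holds because $H\leq 1$ and $g\geq 0$ for $t>0$. I would also use $G(x,t)\leq F(t)$, since $\tilde f\leq f$ on $[a,\infty)$ by \ref{f4} and the choice $f(a)/a^{N-1}=V_0/k$. Together these give $I_{\epsilon,\beta}(u)\geq I_{V_0}(u)$ for all $u\in\mathbf{X}_\epsilon\subset\boldsymbol{Y}$.

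Every path $\gamma\in\Gamma_{\epsilon,\beta}$ then satisfies $I_{V_0}(\gamma(1))\leq I_{\epsilon,\beta}(\gamma(1))<0$, so $\gamma\in\Gamma$. This yields $c_{\epsilon,\beta}\geq c_{V_0}$ for all $\epsilon,\beta$, which is the $\liminf$ inequality.

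\textbf{Upper bound.} Let $u_0$ be the nontrivial critical point of $I_{V_0}$ from the previous lemma. I first want $u_0$ to be a ground state realizing $c_{V_0}$. By \ref{f4} each ray $t\mapsto I_{V_0}(tu)$ has a unique maximum, so $c_{V_0}=\inf_{\mathcal N_0}I_{V_0}$; Lemma \ref{lemma2.3} then gives compactness of minimizing sequences on $\mathcal N_0$. I may replace $u_0$ by $|u_0|$, which lies in $\mathcal N_0$ with the same energy.

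Next I fix $x_0\in\Lambda$ with $V(x_0)=V_0$, which exists after shrinking $\Lambda$ if needed and using continuity, together with a cutoff $\eta_R\in C_c^\infty(B_R)$ equal to $1$ on $B_{R/2}$. I set
\[
w_{\epsilon}(x)=\eta_{R}(\epsilon x-x_0)\,u_0\!\left(x-\tfrac{x_0}{\epsilon}\right),
\]
with $R$ small so that $B_R(x_0)\subset\Lambda$. Then the support of $w_\epsilon$ lies in $\Lambda_\epsilon$, where $g=f$. I take $t_{\epsilon,\beta}>0$ maximizing $t\mapsto I_{\epsilon,\beta}(tw_\epsilon)$. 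Using a large multiple of $w_\epsilon$ as the endpoint of a path, this gives
\[
c_{\epsilon,\beta}\leq \max_{t\geq0}I_{\epsilon,\beta}(tw_\epsilon)=I_{\epsilon,\beta}(t_{\epsilon,\beta}w_\epsilon).
\]

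After translating, and by dominated convergence with continuity of $V$, I expect:
\begin{itemize}
\item $\|w_\epsilon\|_{W^{1,r}_{V_\epsilon}}^r\to\|u_0\|_{W^{1,r}_{V_0}}^r$ for $r\in\{p,N\}$;
\item $\int F(tw_\epsilon)\to\int F(tu_0)$ locally uniformly in $t$.
\end{itemize}
The Heaviside truncation only affects $\int[F(tw_\epsilon)-G_H(tw_\epsilon)]$. This difference is bounded by $\int_{\{0<tw_\epsilon<\beta\}}F(tw_\epsilon)\leq F(\beta)\,m(\operatorname{supp})$ in the worst case. A better bound is $\int_{\{tw_\epsilon\leq\beta\}}F(tw_\epsilon)\leq \int F(\min\{tw_\epsilon,\beta\})\to0$ as $\beta\to0$, using \ref{f3} ($F(s)=o(s^N)$) and dominated convergence. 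So the truncation is negligible uniformly for $t$ in compacts.

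\textbf{Main obstacle.} The key point is showing $t_{\epsilon,\beta}$ stays in a compact subset of $(0,\infty)$, and then that $t_{\epsilon,\beta}\to1$.
\begin{itemize}
\item \emph{Upper bound on $t_{\epsilon,\beta}$:} \ref{f2} gives growth $t^{N+1}$ of $\int G_H(tw_\epsilon)$ on the set where $w_\epsilon$ is bounded below, which forces $t_{\epsilon,\beta}$ bounded.
\item \emph{Lower bound on $t_{\epsilon,\beta}$:} the mountain pass geometry of Lemma \ref{lemma 3.1}(ii) gives $I_{\epsilon,\beta}(t_{\epsilon,\beta}w_\epsilon)\geq r>0$, which keeps $t_{\epsilon,\beta}$ away from $0$.
\end{itemize}
Passing to limits along subsequences, $t_{\epsilon,\beta}\to t_0$ with $t_0$ satisfying the limiting optimality relation $\frac{d}{dt}I_{V_0}(tu_0)|_{t_0}=0$. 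Since $u_0\in\mathcal N_0$ and \ref{f4} makes that fiber maximum unique, $t_0=1$. Hence $\limsup c_{\epsilon,\beta}\leq I_{V_0}(u_0)=c_{V_0}$, which completes the proof.
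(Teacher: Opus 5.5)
Your route is essentially the paper's. You use $I_{\epsilon,\beta}\ge I_{V_0}$ for the lower bound, and a cut-off ground state supported in $\Lambda_\epsilon$, tested along its ray, for the upper bound. Your single-scale cutoff $\eta_R(\epsilon x-x_0)u_0(x-x_0/\epsilon)$ replaces the paper's $\phi(x/R)u_0$, which the paper combines with a Nehari normalization $t_R\to1$ and a final limit $R\to\infty$.

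You also justify several points the paper skips:
\begin{itemize}
\item $G\le F$ off $\Lambda_\epsilon$, via \ref{f4};
\item boundedness of the fibre maximizer, which the paper's $\epsilon'$-term silently needs;
\item the Heaviside error.
\end{itemize}
On the Heaviside error, note that for $w\ge0$ the error is exactly $F(tw)-G_H(\epsilon x,tw)=F(\min\{tw,\beta\})$. This equals $F(\beta)\neq0$ on $\{tw>\beta\}$, so your first bound over $\{0<tw_\epsilon<\beta\}$ is not the right quantity. The $\min$-bound you then use is the right one, and $F(\min\{s,\beta\})\le \big(\sup_{0<\sigma\le\beta}F(\sigma)\sigma^{-N}\big)s^N$ makes it uniformly small by \ref{f3}. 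Also, $t_0=1$ is unnecessary: $t_{\epsilon,\beta}\le T$ together with $\max_{t\ge0}I_{V_0}(tu_0)=c_{V_0}$ already gives the $\limsup$.

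The one step that does not follow from \ref{f1}--\ref{f5} is ``replace $u_0$ by $|u_0|$, which lies in $\mathcal N_0$ with the same energy.'' The norms are unchanged, but $\int F(|u_0|)\,\mathrm{d}x=\int F(u_0)\,\mathrm{d}x$ requires $F$ even, as holds for the examples with $f$ odd. At the very least you need $F(-s)\le F(s)$ for $s>0$ to get $\max_t I_{V_0}(t|u|)\le \max_t I_{V_0}(tu)$. Nonnegativity is essential to your argument, because $G_H(\epsilon x,\cdot)$ vanishes on $(-\infty,\beta]$.

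The paper's proof has the same hidden requirement. When it lets $\beta\to0$, it replaces $\int H(t_*t_Ru_R-\beta)G\,\mathrm{d}x$ by $\int F(t_*t_Ru_R)\,\mathrm{d}x$, which is false on $\{u_0<0\}$.

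Without such a symmetry the lemma can genuinely fail. Since $G_H(\epsilon x,u)\le F(u^+)$, $c_{\epsilon,\beta}$ is bounded below by the mountain-pass level of $u\mapsto\frac1p\|u\|^p_{W^{1,p}_{V_0}}+\frac1N\|u\|^N_{W^{1,N}_{V_0}}-\int F(u^+)\,\mathrm{d}x$. That level is strictly larger than $c_{V_0}$ if, for instance, $F(-s)=\lambda F(s)$ for $s>0$ with $\lambda>1$; this choice is compatible with \ref{f1}--\ref{f5}, and negative test functions then have strictly lower energy. So you should state the symmetry (e.g.\ $f$ odd) as an explicit extra hypothesis rather than use it as a free reduction.
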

\begin{proof}
    Due to assumption \ref{V1} and $H(t)\leq1$, we obtain 
    \begin{equation}\label{n4.7}
    \underset{\epsilon,\beta\rightarrow0}{{\liminf}}~c_{\epsilon,\beta}\geq~c_{V_0}.
    \end{equation}
    Next, we will prove that 
    \begin{equation}\label{n4.8}
        \underset{\epsilon,\beta\rightarrow0}{{\limsup}}~c_{\epsilon,\beta}\leq~c_{V_0}.
        \end{equation}
     Let $u_0$ be the solution of \eqref{Autonomous problem} with $I_{V_0}(u_0)=c_{V_0}$ and $I'(u_0)=0.$ Consider $\phi\in C_c^{\infty}(\mathbb{R}^N)$ such that $0\leq\phi\leq1$ in $\mathbb{R}^N$ and
     \begin{equation*}
       \phi\equiv1,~x\in~B_1  \text{ and }
       \phi\equiv0,~x\in B_2^c.
\end{equation*}
Let $R>0$, define $u_R(x)=\phi(\frac{x}{R})u_0$ and $B_{2R}\subset \Lambda_{\epsilon.} $ Since $|u_{R}|\leq u_0$, so by virtue of the Lebesgue dominated convergence theorem, it follows that 
\begin{equation}
    u_R\rightarrow u_0~~\text{in}~W^{1,t}(\mathbb{R}^N)~\text{as}~ R\rightarrow+\infty~ \text{for}~t\in \{p,N\}.
\end{equation}
It follows that $u_R\rightarrow u_0$ a.e. in $\mathbb{R}^N.$ Define $h:[0,\infty)\rightarrow\mathbb{R}$, $$h(t)=I_{V_0}(tu_R).$$
It can be proved that $h(t)>0$ for sufficiently small $t$ and $h(t)<0$ for large $t.$ As $h$ is continuously differentiable function. So there exist $t_R>0$ such that $h'(t_R)=I_{V_0}(t_Ru_R)u_{R}=0$. This implies that for every $R>0$ there exist $t_R>0$ such that $t_{R}u_{R}\in \mathcal{N}_0.$ Now we claim the following.\\
{\bf Claim}: The sequence $\{t_R\}$ is bounded and upto a subsequence still denoted by itself we have, $$\underset{R\rightarrow+\infty}{\text{lim}}t_{R}=1.$$\\
Suppose, on the contrary, that $t_R\rightarrow+\infty$ as $R\rightarrow+\infty.$ Since $t_{R}u_{R}\in \mathcal{N}_0,$ this  implies that 
\begin{equation}
    t_{R}^p\|u_R\|_{{W_{V_0}^{1,p}}}^p+ t_{R}^N\|u_R\|_{{W_{V_0}^{1,N}}}^N=\underset{\mathbb{R}^N}{\int}f(t_Ru_R)t_Ru_R \dx
\end{equation}
Using assumptions \ref{f2} and \ref{f5}, we obtain
\begin{equation*}
  t_{R}^p\|u_R\|_{{W_{V_0}^{1,p}}}^p+ t_{R}^N\|u_R\|_{{W_{V_0}^{1,N}}}^N=\underset{\mathbb{R}^N}{\int}f(t_Ru_R)t_Ru_R \dx\geq\underset{\mathbb{R}^N}{\int}NF(t_Ru_R)\dx.
\end{equation*}
That is,
\begin{equation}
  \frac{1}{t_R^{N-p}}\|u_R\|_{{W_{V_0}^{1,p}}}^p+ \|u_R\|_{{W_{V_0}^{1,N}}}^N\geq N\xi t_R\underset{\mathbb{R}^N}{\int}|u_R|^{N+1}\dx
\end{equation}
As $R\rightarrow+\infty$, we obtain $\|u_0\|_{W_{V_0}^{1,N}}^N\geq+\infty$, which is not possible. Hence $\{t_R\}$ is bounded.\\
Again suppose that $\underset{R\rightarrow+\infty}{\text{lim}}t_R\neq1$. This means either $\underset{R\rightarrow+\infty}{{\lim}}t_R=t_1>1$ or $\underset{R\rightarrow+\infty}{\text{lim}}t_R=t_1<1$. As $R\rightarrow+\infty$, we get
\begin{equation}\label{eq4.15}
   \frac{1}{t_1^{N-p}}\|u_0\|_{{W_{V_0}^{1,p}}}^p+ \|u_0\|_{{W_{V_0}^{1,N}}}^N=\underset{\mathbb{R}^N}{\int}f(t_1u_0)t_1u_0 \dx 
\end{equation}
Since $u_0\in \mathcal{N}_0$, it follows that 
\begin{equation}\label{eq4.16}
   \|u_0\|_{{W_{V_0}^{1,p}}}^p+ \|u_0\|_{{W_{V_0}^{1,N}}}^N=\underset{\mathbb{R}^N}{\int}f(u_0)u_0\dx. 
\end{equation}
Now ,by subtracting the \eqref{eq4.16} from \eqref{eq4.15}, we get
\begin{equation*}
\begin{split}
    \left(\frac{1}{t_1^{N-p}}-1\right) \|u_0\|_{{W_{V_0}^{1,p}}}^p=\underset{\mathbb{R}^N}{\int}\left(\frac{1}{t_1^{N}}f(t_1u_0)t_1u_0-f(u_0)u_0\right)\dx\\
    =\underset{\mathbb{R}^N}{\int}\left(\frac{1}{t_1^{N-1}}f(t_1u_0)-f(u_0)\right)u_0 \dx
    \end{split}
\end{equation*}
Using assumption \ref{f4}, it follows that in both cases we obtained that $\|u_0\|_{{W_{V_0}^{1,p}}}^p<0$,
which is not possible. Hence, the claim follows, and this implies that $t_Ru_R\rightarrow u_0$ as $R\rightarrow+\infty$  in $\boldsymbol{Y}.$
Similar to Lemma \ref{lemma 3.1}, there is $\hat{t}>0$ such that $I_{\epsilon,\beta}(\hat{t}t_{R}u_{R})<0$. Define $g(t)=t\hat{t}t_Ru_R$ for $t\in[0,1].$ Clearly $g\in\Gamma_{\epsilon,\beta}$. Hence, it holds
\begin{equation}\label{4.18}
    c_{\epsilon,\beta}\leq\underset{t\in[0,1]}{{\max}} I_{\epsilon,\beta}(g(t))\leq\underset{t\geq0}{{\max}}I_{\epsilon,\beta}(\hat{t}tt_Ru_R)=I_{\epsilon,\beta}(t_*t_Ru_R), 
\end{equation}
for some $t_*=t_*(\epsilon,\beta,R).$ Without loss of generality, we can take $V(0)=V_0$. Due to which $V(\epsilon x)\rightarrow V_0$ as $\epsilon\rightarrow0$ i.e. for any $\epsilon'>0$, there is $\epsilon_0>$  and a ball centered at $0$ say ($B_{2R}$ in $\mathbb{R}^N$ )such that
\begin{equation}
    |V(\epsilon x)-V_0|<\epsilon',\ \forall \ \epsilon\in(0,\epsilon_0)~ \text{and} ~x \in B_{2R}.
\end{equation}
The above equation implies that $V(\epsilon x)<\epsilon'+V_0$.
From equation \eqref{4.18}, we get
\begin{align*}
c_{\epsilon,\beta}&=\frac{1}{p}\underset{\mathbb{R}^N}{\int}\left(t_*^pt_R^p|\nabla u_R|^p+V(\epsilon x)t_*^pt_R^p|u_R|^p\right)\dx+\frac{1}{N}\underset{\mathbb{R}^N}{\int}\left(t_*^Nt_R^N|\nabla u_R|^N+V(\epsilon x)t_*^Nt_R^N|u_R|^N\right) \dx\\ &\quad-\underset{\mathbb{R}^N}{\int}G_{H}(\epsilon x,t_*t_Ru_R)\dx\\
&\leq\sum_{s\in\{p,N\}}\frac{1}{s}\underset{\mathbb{R}^N}{\int}\left(t_*^st_R^s|\nabla u_R|^s+V_0t_*^st_R^s|u_R|^s\right)\dx+\sum_{s\in\{p,N\}}\frac{\epsilon't_*^st_R^s}{s}\underset{\mathbb{R}^N}{\int}|u_R|^s \dx\\&\quad -\underset{\mathbb{R}^N}{\int}H(t_*t_Ru_R-\beta)G(\epsilon x,t_*t_Ru_R)\dx.
\end{align*}
For $\beta\rightarrow0$, we have
\begin{align*}
  c_{\epsilon,\beta}\leq I_{V_0}{(t_*t_Ru_R)}+\epsilon'\sum_{s\in\{p,N\}}\frac{t_*^st_R^s}{s}\underset{\mathbb{R}^N}{\int}|u_R|^s \dx.
\end{align*}
Next, for large enough $R>0$, the above equation leads to $\underset{\epsilon,\beta \rightarrow0}{{\limsup}}~c_{\epsilon,\beta}\leq c_{V_0}.$
Therefore, from \eqref{n4.7} and \eqref{n4.8}, we can conclude that the limit exists and $\underset{\epsilon,\beta\rightarrow0}{\text{lim}}c_{\epsilon,\beta}=c_{V_0}.$
\end{proof}
\subsection{Compactness Result}
In the next lemma, we will prove the compactness result using the Lions Compactness principle \cite{Lions-1984}.
\begin{lemma}\label{lemma4.3}
   Assume that \ref{V1}-\ref{V2} and \ref{f1}-\ref{f5} hold. Let $\epsilon_n$ and $\beta_n\rightarrow0$ as $n\rightarrow\infty$ and $u_n=\{u_{\epsilon_n,\beta_n}\}_{n\geq 1}\subseteq\mathbf{X}_{\epsilon}$ be a non negative sequence such that $0\in\partial I_{\epsilon_n,\beta_n}(u_n)$, $I_{\epsilon_n,\beta_n}(u_n)\rightarrow c_{V_0}$ and $\underset{n\rightarrow+\infty}{{\limsup}}\|u_n\|_{W^{1,N}(\mathbb{R}^N)}^{N'}<\frac{\alpha_N}{\alpha_0}$. Then there exits a sequence $\{y_n\}_{n\geq 1}\subseteq\mathbb{R}^N$ such that sequence 
    $$ w_n(x)=u_n(x+y_n)$$
    has a convergent subsequence in $\boldsymbol{Y}.$ In addition, upto a subsequence $a_n=\{\epsilon_ny_n\}\rightarrow y_0$ as $n\rightarrow+\infty$ for some $y_0\in \Lambda$ and $V(y_0)=V_0.$ \end{lemma}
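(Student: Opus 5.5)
We will follow the standard concentration–compactness scheme of del Pino–Felmer as adapted in \cite{Alves-2014}, working throughout with the Cerami/critical information $0\in\partial I_{\epsilon_n,\beta_n}(u_n)$. The plan has three steps: rule out vanishing, transfer the problem to the autonomous limit, and then localize the concentration point.

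\textbf{Step 1: boundedness and non-vanishing.} First, since $0\in\partial I_{\epsilon_n,\beta_n}(u_n)$ and $I_{\epsilon_n,\beta_n}(u_n)\to c_{V_0}$, the argument of Lemma \ref{lemma 3.2} shows that $\{u_n\}$ is bounded in $\mathbf{X}_{\epsilon_n}$, uniformly in $n$ because $V\ge V_0$. We then claim that there are $r,\eta>0$ and $y_n\in\mathbb{R}^N$ with $\int_{B_r(y_n)}|u_n|^N\,\dx\ge\eta$.

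Suppose not. Lions' lemma \cite{Lions-1984} then gives $u_n\to0$ in $L^q(\mathbb{R}^N)$ for every $q>N$. The hypothesis $\limsup\|u_n\|_{W^{1,N}}^{N'}<\alpha_N/\alpha_0$ lets us apply Hölder together with the Moser–Trudinger inequality (Lemma \ref{lemma2.2}), exactly as in the estimate of $I_2$ in Lemma \ref{lemma 3.3}. This yields $\int \rho_n u_n\,\dx\to0$, with the part $|t|^{N-1}$ of the growth of $g$ absorbed using \ref{g3} and the penalization $\tilde f(t)\le V_0t^{N-1}/k$ outside $\Lambda_{\epsilon_n}$. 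Testing the equation with $u_n$ then forces $\|u_n\|_{\mathbf{X}_{\epsilon_n}}\to0$, and hence $I_{\epsilon_n,\beta_n}(u_n)\to0$. This contradicts $c_{V_0}>0$, so the claim holds.

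\textbf{Step 2: passing to the autonomous problem.} Set $w_n=u_n(\cdot+y_n)$. This sequence is bounded in $\boldsymbol{Y}$, so $w_n\rightharpoonup w\neq0$, the non-triviality coming from the local compact embedding and Step 1.

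We first show that $\{\epsilon_ny_n\}$ is bounded. If $|\epsilon_ny_n|\to\infty$ along a subsequence, then on compact sets the shifted nonlinearity is eventually $\tilde f$, and the limit $w$ solves $-\Delta_pw-\Delta_Nw+V_\infty'(\ldots)\le \tilde f(w)$ with potential at least $V_0$. Testing with $w$ and using $\tilde f(t)t\le \frac{V_0}{k}|t|^N$ gives $(1-1/k)\|w\|^N\le0$, so $w=0$, a contradiction. Hence, along a subsequence, $\epsilon_ny_n\to y_0$.

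Next we pass to the limit in the shifted inclusion, using Lemma \ref{lemma 3.3} to get a.e. convergence of gradients and the Orlicz bound for $\rho_n$ from Lemma \ref{lemma3.5}. Since $\beta_n\to0$, the Heaviside factor disappears in the limit, and $w$ is a critical point of the functional with potential $V(y_0)$ and nonlinearity $\chi(x)f+(1-\chi(x))\tilde f$, where $\chi$ is the limit of $\chi_{\Lambda}(\epsilon_n x+\epsilon_n y_n)$ (equal to $1$ if $y_0\in\Lambda$).

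\textbf{Step 3: locating $y_0$ and strong convergence.} Here the order is: first a level comparison, then strong convergence, and only then the conclusion $y_0\in\Lambda$.

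For the level comparison, we use the Nehari characterization together with \ref{f4}–\ref{f5}, Fatou's lemma applied to $\frac1N\mathscr G$, and the inequality $G\le F$. These give
\[
c_{V(y_0)}\le I_{V(y_0)}(t w)\le\liminf_n I_{\epsilon_n,\beta_n}(u_n)=c_{V_0}
\]
for a suitable $t>0$ projecting onto the Nehari manifold. Since the autonomous level is strictly increasing in the potential, this forces $V(y_0)\le V_0$. Moreover, all the Fatou inequalities become equalities, so $\int\mathscr G(w_n)\to\int\mathscr G(w)$ and $\|w_n\|\to\|w\|$, which gives $w_n\to w$ in $\boldsymbol{Y}$ via Brezis–Lieb (as in \eqref{3.27}). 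Lemma \ref{lemma2.3} could alternatively be used at this point.

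Finally, $V(y_0)\le V_0$ with $y_0\in\overline\Lambda$, together with \ref{V2} (which gives $V>V_0$ on $\partial\Lambda$), forces $y_0\in\Lambda$ and $V(y_0)=V_0$. The fact that $y_0\in\overline\Lambda$ must itself be argued: if $y_0\notin\overline\Lambda$, then $\chi=0$ and the penalized argument of Step 2 gives $w=0$.

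\textbf{Main obstacle.} The hardest part is Step 3's level comparison. It requires the limiting nonlinearity to coincide with $f$ (that is, $\chi\equiv1$) before we know $y_0\in\Lambda$. It also requires showing that the Nehari projection factor $t\le1$ under the penalized, partially truncated nonlinearity, which needs \ref{g4} combined with $\tilde f\le f$. In addition, the exponential growth must be handled uniformly through the $W^{1,N}$ bound, so that the Vitali and Lions arguments of Steps 1–2 are justified.
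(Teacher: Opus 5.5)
Steps 1--2 follow the paper's outline: non-vanishing via Lions' lemma, then $\epsilon_ny_n\to y_0\in\overline\Lambda$ through the penalization. Step 3, however, has a genuine gap. It sits exactly where you put your ``main obstacle'', and you leave it unresolved.

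You bound $I_{V(y_0)}(tw)$, whose nonlinearity is $f$, by applying Fatou to $\frac1N\bigl(\rho_nw_n-NG_H(\epsilon_nx+a_n,w_n)\bigr)$. In the limit this integrand becomes $\chi\mathscr G(w)+(1-\chi)\tilde{\mathscr G}(w)$, where $\tilde{\mathscr G}(s)=\tilde f(s)s-N\tilde F(s)$. For $s\ge a$ one computes $\tilde{\mathscr G}(s)=\mathscr G(a)\le\mathscr G(s)$, since $\mathscr G$ is nondecreasing on $(0,\infty)$ by \ref{f4}.

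So Fatou only bounds the $\liminf$ from below by a quantity that can be strictly smaller than $\int\mathscr G(w)\,\dx$. It runs the wrong way unless $\chi\equiv1$ where $w\ge a$, which amounts to knowing $y_0\in\Lambda$ --- the conclusion. The bound $G\le F$ does not rescue this, because once you pass to $\mathscr G$ it points in the same wrong direction. Both $V(y_0)\le V_0$ and your strong convergence (taken from equality in the same chain) rest on this step, so Step 3 is circular as written.

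There is a second problem with the strong convergence. The identity
$I_{\epsilon_n,\beta_n}(u_n)=(\frac1p-\frac1N)\|u_n\|^p_{W^{1,p}_{V_{\epsilon_n}}}+\frac1N\int(\rho_nu_n-NG_H(\epsilon_nx,u_n))\,\dx$
has no $W^{1,N}$-term. Equality in Fatou would therefore give convergence of the $W^{1,p}$-part and of the $\mathscr G$-integral only, not $\|w_n\|_{\boldsymbol Y}\to\|w\|_{\boldsymbol Y}$.

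The missing idea, which is how the paper proceeds, is to compare along rays with the $V_0$-problem. This uses only the pointwise bounds $V(\epsilon_n\cdot)\ge V_0$ and $G_H(x,s)\le G(x,s)\le F(s)$ for $s\ge0$; the latter comes from $\tilde f\le f$, itself a consequence of \ref{f4}. No information on $\chi$ is needed. Since $t\mapsto H(ts-\beta)g(x,ts)/(ts)^{N-1}$ is nondecreasing and $t^{p-N}$ is strictly decreasing, $0\in\partial I_{\epsilon_n,\beta_n}(u_n)$ forces $t=1$ to maximize $t\mapsto I_{\epsilon_n,\beta_n}(tu_n)$. With $t_nw_n\in\mathcal N_0$ and translation invariance,
\[
c_{V_0}\le I_{V_0}(t_nw_n)\le I_{\epsilon_n,\beta_n}(t_nu_n)\le I_{\epsilon_n,\beta_n}(u_n)=c_{V_0}+o_n(1).
\]
So $\{t_nw_n\}$ is minimizing on $\mathcal N_0$, and Lemma~\ref{lemma2.3} gives $t_nw_n\to t_0w$ in $\boldsymbol Y$: strong convergence comes first.

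Only then is $y_0$ localized. If $V(y_0)>V_0$, then $c_{V_0}=I_{V_0}(t_0w)<I_{V(y_0)}(t_0w)\le\liminf_n I_{\epsilon_n,\beta_n}(t_nu_n)\le c_{V_0}$, a contradiction. Here $\int F(t_nw_n)\,\dx$ passes to the limit by strong convergence, not by Fatou. Together with $y_0\in\overline\Lambda$ and \ref{V2}, this gives $y_0\in\Lambda$.

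If you want to keep your order instead, insert the limit functional $J$ with potential $V(y_0)$ and nonlinearity $h=\chi f+(1-\chi)\tilde f$. Then use $c_{V(y_0)}\le I_{V(y_0)}(tw)\le J(tw)\le J(w)\le\liminf_n I_{\epsilon_n,\beta_n}(u_n)$. This applies $H\le F$ to the functional itself and Fatou only to the matching integrand $\chi\mathscr G+(1-\chi)\tilde{\mathscr G}$.
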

    \begin{proof}
       From lemma \ref{lemma 3.2} and assumption \ref{V1}, it follows that $\{u_n\}_{n\geq 1}$ is bounded in $\boldsymbol{Y}$.\\ 
       {\bf Claim 1:} There exists $\{y_n\}\subseteq\mathbb{R}^N$, $R>0$ and $\hat{\sigma}>0$ such that $\underset{n\rightarrow+\infty}{{\liminf}}\underset{B_{R}(y_n)}{\int}|u_n|^N \dx\geq\hat{\sigma}.$\\
       Suppose that the above claim doesn't hold. This means
       $\underset{n\rightarrow+\infty}{{\liminf}}~\underset{y\in\mathbb{R}^N}{{\sup}}\underset{B_{R}(y)}{\int}|u_n|^N \dx=0. $
        From \cite{Lions-1984, Alves-2009}, it follows that $u_n\rightarrow0$ in $L^{\sigma}(\mathbb{R}^N)$ for any $\sigma\in(N,\infty).$ Since $0\in\partial I_{\epsilon_n,\beta_n}(u_n) $, this implies that 
       \begin{equation*}
\|u_n\|_{W_{V_{\epsilon_n}}^{1,p}}^p+\|u_n\|_{W_{V_{\epsilon_n}}^{1,N}}^N=\underset{\mathbb{R}^N}{\int}\rho_nu_n \dx+o_n(1)~~\text{as}~n\rightarrow+\infty.
       \end{equation*}
       From assumption \ref{f1}, \ref{f2} (Remark \ref{rem1.1}) and $\underset{n\rightarrow+\infty}{{\limsup}}\|u_n\|_{W^{1,N}(\mathbb{R}^N)}^{N'}<\frac{\alpha_N}{\alpha_0}$, it follows that for any $\tau>0$
       \begin{equation*}
           \left| \underset{\mathbb{R}^N}{\int}\rho_nu_n \dx \underset{}{}\right|\leq M_1\tau,~\text{as}~n\rightarrow+\infty.
       \end{equation*}
       So for sufficiently small $\tau>0$ we deduce that $\|u_n\|_{W_{V_{\epsilon_n}}^{1,p}}^p+\|u_n\|_{W_{V_{\epsilon_n}}^{1,N}}^N\rightarrow0~~\text{as}~n\rightarrow+\infty.$
       This means $u_n\rightarrow 0$ in $\boldsymbol{X_{\epsilon_n}}$ as $n\rightarrow+\infty.$ It follows that $I_{\epsilon_n,\beta_n}(u_n)\rightarrow0$ as $n\rightarrow+\infty$ which contradicts lemma \ref{lemma4.2}. Hence, our claim follows.\\
       Define $w_n(x)=u_n(x+y_n)$. Since $\|.\|_{\boldsymbol{Y}}$ is invariant under translation. This implies that $\{w_n\}_{n\geq1}$ is bounded in $\boldsymbol{Y}$. So up to a subsequence 
 $$\begin{cases}
               w_n\rightharpoonup~w~\text{in}~\boldsymbol{Y},\\
               w_n\rightarrow w~\text{in}~L^s(B_R)~\text{for}~s\geq1,\\
               w_n(x)\rightarrow~w(x)~\text{a.e. in}~ \mathbb{R}^N.
           \end{cases}$$
           From Claim 1, there holds $               \underset{B_R(0)}{\int}|w|^Ndx\geq\hat{\sigma}>0.$
           This means $w\not\equiv0$.\\ Next, we will show that $\{a_n\}=\{\epsilon_ny_n\}_{n\geq1}$ is bounded in $\mathbb{R}^N$. If we are able to show that $\underset{n\rightarrow+\infty}{{\lim}}\text{dist}(a_n,\overline{\Lambda})=0,$
        then we can deduce boundedness of $\{a_n\}_{n\geq1}.$
        On the contrary, we assume that it is not true, then there exists $\delta^*>0$ and a subsequence of $\{a_n\}$ such that 
         $\mathrm{dist}(a_n,\overline{\Lambda})\geq \delta^*$ for all $n\in \mathbb{N}.$
        This implies there exists $r>0$, such that $B_r(a_n)\subset\Lambda^c.$ Define $\phi_n(x)=\psi(\frac{x}{n})w(x)$, where $\psi$ is defined in lemma \ref{lemma 3.3}. Note that, $\phi_n\rightarrow w$ in $\boldsymbol{Y}$ as $n\rightarrow+\infty.$ Since, $0\in\partial I_{\epsilon_n,\beta_n}(u_n)$ and assumption \ref{V1} with a change of variable $z\mapsto x+y_n$, we obtain
        \begin{equation}\label{4.25}
        \begin{split}
            \sum_{t\in\{p,N\}}\underset{\mathbb{R}^N}{\int}(|\nabla w_n|^{t-2}\nabla{w}_n\nabla\phi_n +V_0|w_n|^{t-1}\phi_n) \dx\leq\underset{\mathbb{R}^N}{\int}g(\epsilon_nx,w_n)\phi_n \dx\\
            =\underset{B_{\frac{r}{\epsilon_n}}}{\int}g(\epsilon_nx,w_n)\phi_n \dx+\underset{B_{\frac{r}{\epsilon_n}}^c}{\int}g(\epsilon_nx,w_n)\phi_n \dx.
            \end{split}
        \end{equation}
        Since $m(B_{\frac{r}{\epsilon_n}}^c)\rightarrow0~~\text{as}~n\rightarrow+\infty$, by the Lebesgue dominated convergence theorem 
        $$\underset{B_{\frac{r}{\epsilon_n}}^c}{\int}g(\epsilon_nx,w_n)\phi_n \dx\rightarrow0~~ \text{as}~~ n\rightarrow+\infty.$$
        Now 
        \begin{equation*}
          \underset{B_{\frac{r}{\epsilon_n}}}{\int}g(\epsilon_nx,w_n)\phi_n \dx=\underset{B_{\frac{r}{\epsilon_n}}}{\int}\tilde{f}(w_n)\phi_n \dx\leq\underset{B_{\frac{r}{\epsilon_n}}}{\int}f(w_n)\phi_n\dx+\frac{V_0}{k}\underset{B_{\frac{r}{\epsilon_n}}}{\int}|w_n|^{N-1}\phi_n \dx. 
        \end{equation*}
        Using H\"older's inequality one can show that $$\frac{V_0}{k}\underset{B_{\frac{r}{\epsilon_n}}}{\int}|w_n|^{N-1}\phi_n \dx \rightarrow 0 \text{ as } n\rightarrow+\infty.$$ From assumptions \ref{f1}, \ref{f2} and $\underset{n\rightarrow+\infty}{{\limsup}}\|u_n\|_{W^{1,N}(\mathbb{R}^N)}^{N'}<\frac{\alpha_N}{\alpha_0}$, we get $$\underset{B_{\frac{r}{\epsilon_n}}}{\int}f(w_n)\phi_n \dx\rightarrow0~\text{as}~n\rightarrow+\infty.$$
        Therefore, as $n\rightarrow+\infty$ it follows that $w\equiv0$, which is absurd. Hence $\{a_n\}$ is bounded in $\mathbb{R}^N.$ So up to a subsequence, still denoted by itself, such that $a_n\rightarrow y_0\in\overline{\Lambda}.$
        
        {\bf Claim 2:} The sequence $w_n\rightarrow w~\text{in}~\boldsymbol{Y}$.\\  For each $n\in\mathbb{N}$, there is a $t_n>0$ such that $t_nw_n \in\mathcal{N}_0$ and using similar arguments as in Lemma \ref{lemma4.2}, it can be proved that $\{t_n\}_{n\geq1}$ is bounded and up to a subsequence we can assume that $t_nw_n\rightarrow t_0w$ a.e. in $\mathbb{R}^N$. So by a change of variable, \ref{V1} and  using $g_H(x,t)\leq f(t)$, we get, 
        \begin{equation}
            c_{V_0}\leq I_{V_0}(t_nw_n)\leq I_{\varepsilon_n,\beta_n}(t_nu_n)\leq I_{\epsilon_n,\beta_n}(u_n)=c_{V_0}+o_n(1)~\text{as}~n\rightarrow+\infty
        \end{equation}
    Consequently, $I_{V_0}(t_nw_n)\rightarrow c_{V_0}$ as $n\rightarrow+\infty.$ Now using Lemma \ref{lemma2.3}, we conclude our claim.\\
    {\bf Claim 3:} $V(y_0)=V_0~~\text{and}~y_0\in\Lambda.$\\
        Suppose that the above claim is not true. This implies that $V(y_0)>V_0$. Once that $t_nw_n\rightarrow t_0w$ in $\boldsymbol{Y}$, by Fatou's lemma with the invariance of $\mathbb{R}^N$ by translation we obtain that
        \begin{align*}
               c_{V_0}&=I_{V_0}(t_0w)<\sum_{s\in\{p,N\}}\frac{1}{s}(|\nabla t_0w|^s+V(y_0)|t_0w|^s)\dx-\underset{\mathbb{R}^N}{\int}F(t_0w)\dx\\
               &\leq\underset{n\rightarrow+\infty}{{\liminf}}\sum_{s\in\{p,N\}}\frac{1}{s}(|\nabla t_nw_n|^s+V(\epsilon_nx+a_n)|t_nw_n|^s) \dx-\underset{\mathbb{R}^N}{\int}F(t_nw_n)\dx\\
               &\leq\underset{n\rightarrow+\infty}{{\liminf}}~I_{\epsilon_n,\beta_n}(t_nu_n)\leq c_{V_0}.
        \end{align*}
        which is not possible. Therefore $V(y_0)=V_0$ and thanks to \ref{V2}, it follows that $y_0\notin\partial\Lambda$. Hence $y_0\in\Lambda.$
    \end{proof}
    \subsection{Moser Iteration Argument}
In the next lemma, we will show the $
L^{\infty}$ estimate of $\{w_n\}_{n\geq1}$ using Moser iteration arguments.
\begin{lemma}\label{lemma4.4}
  Assume that \ref{V1}-\ref{V2} and \ref{f1}-\ref{f5} hold. Let $ \{w_n\}_{n\geq1}$ be  defined in lemma \ref{lemma4.3}. Then there exist a constant $K>0$ such that $$\|w_n\|_{L^{\infty}(\mathbb{R}^N)}\leq K, \text{ for all } n\in\mathbb{N}.$$
  {Moreover, we have $$\underset{|x|\rightarrow+\infty}{{\lim}}w_n(x)=0,~\text{uniformly in } n\in\mathbb{N.}$$} \end{lemma}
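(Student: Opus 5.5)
We will run a Moser iteration on the translated equation satisfied by $w_n$. We use the uniform $\boldsymbol{Y}$-bound on $\{w_n\}$, its strong convergence $w_n\to w$ in $\boldsymbol{Y}$ from Lemma \ref{lemma4.3}, and the Trudinger--Moser control coming from $\limsup\|u_n\|_{W^{1,N}}^{N'}<\alpha_N/\alpha_0$. After the change of variables $x\mapsto x+y_n$, $w_n$ satisfies
\begin{equation*}
\sum_{t\in\{p,N\}}\int_{\mathbb{R}^N}\bigl(|\nabla w_n|^{t-2}\nabla w_n\cdot\nabla v+V(\epsilon_n x+\epsilon_n y_n)|w_n|^{t-2}w_nv\bigr)\dx=\int_{\mathbb{R}^N}\tilde\rho_n v\dx
\end{equation*}
for all $v$, where $0\le\tilde\rho_n\le g(\epsilon_nx+\epsilon_ny_n,w_n)$ and $w_n\ge0$. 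By \ref{g1}, \ref{g3}, and the growth estimate \eqref{eqn3.101}, we have $0\le\tilde\rho_n\le w_n^{N-1}+C\,w_n^{\nu-1}\Phi_{\alpha,N-2}(w_n)$. Choose $\alpha>\alpha_0$ and $q>1$ with $q\alpha\sup_n\|w_n\|_{W^{1,N}}^{N'}<\alpha_N$, which is possible by the hypothesis inherited in Lemma \ref{lemma4.3}. Lemma \ref{lemma2.2} then gives $\sup_n\int\Phi_{\alpha,N-2}(w_n)^q\dx<\infty$. Hence $\tilde\rho_n\le h_n w_n^{N-1}$, where $h_n:=1+C w_n^{\nu-N}\Phi_{\alpha,N-2}(w_n)$ is bounded in $L^{q_0}(\mathbb{R}^N)$ for some $q_0>1$. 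Actually we need $h_n$ bounded in $L^{q_0}$ with $q_0>N/N=1$ on balls, which holds by Hölder.

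For the iteration, fix $L>0$ and $\gamma>1$. Set $w_{L,n}=\min\{w_n,L\}$, test with $v=\eta^N w_n w_{L,n}^{N(\gamma-1)}$ with $\eta$ a cutoff, and put $z_{L,n}=w_n w_{L,n}^{\gamma-1}$. The $p$-Laplacian terms and the potential terms are nonnegative and are discarded, keeping only the $N$-Laplacian part; this is legitimate since $V\ge V_0>0$. Standard manipulations give
\begin{equation*}
\int\eta^N|\nabla z_{L,n}|^N\dx\le C\gamma^N\Bigl(\int|\nabla\eta|^N z_{L,n}^N\dx+\int h_n\eta^N z_{L,n}^N\dx\Bigr).
\end{equation*}
We bound the $h_n$ term by Hölder: $\|h_n\|_{L^{q_0}}\,\|\eta z_{L,n}\|_{L^{Nq_0'}}^N$. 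Then we use the Gagliardo--Nirenberg (or Sobolev-type) inequality in $W^{1,N}$, interpolating $L^{Nq_0'}$ between $L^N$ and a larger $L^{s}$ and absorbing the gradient part. Letting $L\to\infty$ yields the reverse Hölder step $\|w_n\|_{L^{N\gamma\chi}(B_{r})}\le (C\gamma)^{1/\gamma}\|w_n\|_{L^{N\gamma}(B_{2r})}$ with some $\chi>1$. Iterating with $\gamma=\chi^m$ and radii shrinking geometrically gives $\|w_n\|_{L^\infty(B_1(x_0))}\le C\|w_n\|_{L^{N}(B_2(x_0))}$. The constant $C$ is independent of $n$ and $x_0$, because $\sup_n\|h_n\|_{L^{q_0}}<\infty$ is a global bound. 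Since $\|w_n\|_{L^N}$ is uniformly bounded, we obtain $\|w_n\|_{L^\infty}\le K$.

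For the decay, apply the same local estimate centered at $x_0$:
\begin{equation*}
\sup_{B_1(x_0)}w_n\le C\Bigl(\int_{B_2(x_0)}|w_n|^N\dx\Bigr)^{1/N}.
\end{equation*}
Since $w_n\to w$ in $\boldsymbol{Y}$, hence in $L^N(\mathbb{R}^N)$, the family $\{|w_n|^N\}$ is uniformly tight: for every $\xi>0$ there is $R$ with $\sup_n\int_{B_R^c}|w_n|^N\dx<\xi$, using finitely many $n$ plus the tail of $w$. Taking $|x_0|>R+2$ gives $\sup_n\sup_{B_1(x_0)}w_n\le C\xi^{1/N}$, which is the uniform limit $w_n(x)\to0$ as $|x|\to\infty$.

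The main obstacle is making the constants in the iteration uniform in $n$ in the presence of the exponential nonlinearity. This hinges entirely on the uniform $L^{q_0}$ bound for $h_n$. That bound requires the Trudinger--Moser threshold from Lemma \ref{lemma3.4} and Lemma \ref{lemma4.3}, and therefore the strict inequality $\limsup\|u_n\|_{W^{1,N}}^{N'}<\alpha_N/\alpha_0$, which transfers to $w_n$ by translation invariance. A secondary technical point is handling the mixed $(p,N)$ structure. We plan to drop the $p$-terms after checking their sign with the test function $\eta^N w_n w_{L,n}^{N(\gamma-1)}$. If the cross term $\int|\nabla w_n|^{p-2}\nabla w_n\cdot\nabla\eta^N\,(\cdots)$ causes trouble, we will estimate it by Young's inequality against $\int\eta^N|\nabla w_n|^p w_{L,n}^{N(\gamma-1)}$ and add a lower-order term controlled by $\|\nabla\eta\|_\infty$.
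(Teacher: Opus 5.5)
\textbf{There is a gap in your decay argument: the local estimate it rests on is not established.} That estimate is $\sup_{B_1(x_0)}w_n\le C\|w_n\|_{L^N(B_2(x_0))}$.

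With the cut-off test function $v=\eta^N w_nw_{L,n}^{N(\gamma-1)}$, the $p$-Laplacian does not give a nonnegative contribution. Besides $\int\eta^N|\nabla w_n|^{p-2}\nabla w_n\cdot\nabla(w_nw_{L,n}^{N(\gamma-1)})\,\dx\ge0$, it also produces $N\int\eta^{N-1}w_nw_{L,n}^{N(\gamma-1)}|\nabla w_n|^{p-2}\nabla w_n\cdot\nabla\eta\,\dx$, which has no sign. The paper can drop the $p$-terms only because its Moser iteration is global: it tests with $w_nw_{L,n}^{N(\gamma-1)}$ itself, with no cut-off. It then obtains the decay by a different device, namely the comparison $0\le w_n\le z_n$ with $T(z_n)=\tau w_n^{N-1}+C(\tau)w_n^{\nu-1}\Phi_{\alpha_0,N-2}(w_n)$ and $z_n\to z$ in $\boldsymbol{Y}$.

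Your fallback is Young's inequality against $\int\eta^N|\nabla w_n|^pw_{L,n}^{N(\gamma-1)}\,\dx$. It leaves the term $C\int\eta^{N-p}|\nabla\eta|^pw_n^pw_{L,n}^{N(\gamma-1)}\,\dx$, which has degree $N\gamma-(N-p)$ in $w_n$ instead of $N\gamma$. Because the $(p,N)$ operator is not homogeneous, no choice of Young exponents avoids this. For the bound $\|w_n\|_{L^\infty}\le K$ the term is harmless. For the decay it is fatal if you treat it as a lower-order constant, as your wording suggests. You would then only get $\sup_{B_1(x_0)}w_n\le C(1+\|w_n\|_{L^N(B_2(x_0))})$, and uniform $L^N$-tightness gives no smallness.

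\textbf{How to repair it: keep track of that term through the iteration.}
\begin{itemize}
\item By H\"older on the ball, the term is at most $C_k\|w_n\|_{L^{N\gamma_k}(B_{r_k}(x_0))}^{N\gamma_k-(N-p)}$.
\item Take $\gamma_k=\chi^k$, radii $r_k$ shrinking geometrically, and $\Theta_k:=\|w_n\|_{L^{N\gamma_k}(B_{r_k}(x_0))}$. Each step then reads $\Theta_{k+1}\le a_k\max\{\Theta_k,\Theta_k^{\kappa_k}\}$, with $\kappa_k=1-\frac{N-p}{N\chi^k}$ and $A:=\prod_k a_k<\infty$.
\item Since $\kappa:=\prod_k\kappa_k>0$ (note $\kappa_0=p/N$), induction gives $\sup_{B_1(x_0)}w_n\le A\|w_n\|_{L^N(B_2(x_0))}^{\kappa}$ whenever the right-hand side is at most $1$.
\item The constant $A$ is independent of $n$ and $x_0$, because your $L^{q_0}$ bound on $h_n$ over unit balls is uniform.
\end{itemize}
Your tightness argument then gives the uniform decay.

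With this modification, your route is a valid alternative to the paper's. It combines a local sup estimate with the $L^N$-tightness inherited from $w_n\to w$ in $\boldsymbol{Y}$. It is also more self-contained: the paper's argument needs the surjectivity of $T$, a comparison principle, the strong convergence $z_n\to z$, and an external decay result.
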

  \begin{proof}
      From Lemma \ref{lemma4.2}, it follows that $\underset{n\rightarrow+\infty}{\lim}I_{\epsilon_n,\beta_n}(u_n)=c_{V_0}$. Furthermore, there exist $\{y_n\}_{n\geq 1}\subseteq\mathbb{R}^N$ satisfies $w_n(x)=u_n(x+y_n)$ and $\{\epsilon_ny_n\}\rightarrow y_0\in\Lambda.$ For  $L>0~\text{and}~\gamma>1$, define
      \begin{equation}
          g(w_n)=w_nw_{L,n}^{N(\gamma-1)}~\text{and}~v_{L,n}=w_nw_{L,n}^{\gamma-1}
      \end{equation}
      where $w_{L,n}=\text{min}\{w_n,L\}.$ On using $g(w_n)$ as a test function in equation \eqref{3.26} and  a simple change of variable leads to
      \begin{equation*}
           \sum_{s\in\{p,N\}}\underset{\mathbb{R}^N}{\int}\big[|\nabla w_n|^{s-2}\nabla w_n\nabla g(w_n)+V(\epsilon_nx+a_n)|w_n|^{s-2}w_ng(w_n)\big]\dx=\underset{\mathbb{R}^N}{\int}\rho_n(x+y_n)g(w_n)\dx, 
           \end{equation*}
           then
           \begin{align*}
          \sum_{s\in\{p,N\}}\underset{\mathbb{R}^N}{\int}(|\nabla w_n|^{s} w_{L,n}&+N(\gamma-1)|\nabla w_n|^{s-2}\nabla w_n\nabla w_{L,n}w_nw_{L,n}^{N(\gamma-1)-1}\dx\\ &+V(\epsilon_nx+a_n)|w_n|^{s-2}w_ng(w_n))\dx=\underset{\mathbb{R}^N}{\int}\rho_n(x+y_n)g(w_n)\dx.
      \end{align*}
      Therefore, we have 
      \begin{align}\label{4.31}
        \sum_{s\in\{p,N\}}\underset{\mathbb{R}^N}{\int}(|\nabla w_n|^{s} w_{L,n})\dx&+N(\gamma-1) \underset{w_n\leq L}{\int}w_nw_{L,n}^{N(\gamma_-1)-1}|\nabla w_{L,n}|^s\dx \notag\\&\quad + \underset{\mathbb{R}^N}{\int}V(\epsilon_nx+a_n)|w_n|^{s-2}w_ng(w_n)\dx=\underset{\mathbb{R}^N}{\int}\rho_n(x+y_n)g(w_n)\dx .
      \end{align}
      Set 
      \begin{equation}\label{4.32}
          \Xi(t)=\underset{0}{\int}^t(g'(\tau))^\frac{1}{N}\mathrm{d}\tau~\text{and}~\theta(t)=\frac{{|t|}^{N}}{N}.
      \end{equation}
    since $g$ is an increasing function, therefore $(s-t)(g(s)-g(t))\geq 0$ for all $s,t\in\mathbb{R}.$
      Further, applying Jensen's inequality, one has 
      \begin{equation}
          |\theta(s)-\theta(t)|\leq\Xi'(s-t)(g(s)-g(t)),~\forall~s,t\in\mathbb{R}.
      \end{equation}
      From equation \eqref{4.32}, it follows that
      \begin{equation}
          \frac{1}{\gamma}w_{L,n}^{\gamma-1}w_n\leq\Xi(w_n)\leq w_{L,n}^{\gamma-1}w_n.
      \end{equation}
      Due to equivalent norm and  continuous embedding $W_{V_0}^{1,N}(\mathbb{R}^N)\hookrightarrow L^s(\mathbb{R}^N)$ for $s>N$, i.e., there is a best constant $C_*>0$ such that 
      \begin{equation}\label{4.36}
        \frac{1}{\gamma^N}C_*^N\|w_nw_{L,n}^{\gamma-1}\|_{L^s(\mathbb{R}^N)}^N\leq C_{*}^N\|\Xi(w_n)\|_{L^s(\mathbb{R}^N)}^N\leq \|\Xi(w_n)\|_{W_{V_0}^{1,N}(\mathbb{R}^N)}^N.
      \end{equation}
      Then, \eqref{4.31} and assumptions \ref{f1}, \ref{f2} (Remark \ref{rem1.1}) lead us to the following
      \begin{equation}
       \|\Xi(w_n)\|_{W_{V_0}^{1,N}(\mathbb{R}^N)}^N\leq  \left(\tau\underset{\mathbb{R}^N}{\int}|w_n|^{N}w_{L,n}^{N(\gamma-1)}\dx+\underset{\mathbb{R}^N}{\int}C(\tau)|w_n|^{\nu-1}w_nw_{L,n}^{N(\gamma-1)}\Phi_{\alpha_0,N-2}(w_n)\dx\right) . 
      \end{equation}
      From equation \eqref{4.36}, we obtain
      \begin{equation}
        \|\Xi(w_n)\|_{W_{V_0}^{1,N}(\mathbb{R}^N)}^N\leq \tau\gamma^NC_*^N\|\Xi(w_n)\|_{W_{V_0}^{1,N}(\mathbb{R}^N)}^N+\underset{\mathbb{R}^N}{\int}C(\tau)|w_n|^{\nu-1}w_nw_{L,n}^{N(\gamma-1)}\Phi_{\alpha_0,N-2}(w_n)\dx.   
      \end{equation}
      Choose $\tau>0$ such that $0<\tau<\gamma^{-N}C_*^{-N}$ and using generalized H\"older's inequality, boundedness of $w_n$, Lemmas \ref{lemma3.4} and \ref{lemma2.2}, one leads to 
      \begin{equation}
          \|w_nw_{n,L}^{\gamma-1}\|_{L^s(\mathbb{R}^N)}^N\leq\tilde{C}\gamma^N\|w_nw_{n,L}^{\gamma-1}\|_{L^{N\mu}(\mathbb{R}^N)}^{N\mu}
      \end{equation}
      for some $\mu>1.$ Since $w_{n,L}\leq w_n$ and as $L\rightarrow+\infty$, Fatou's lemma implies that 
      \begin{equation}\label{4.40}
         \|w_n\|_{L^{\gamma s}(\mathbb{R}^N)}\leq(\tilde{C})^{\frac{1}{{N\gamma}}}\gamma^{\frac{1}{\gamma}}\|w_n\|_{L^{N\mu\gamma}(\mathbb{R}^N)}^{\mu}.
      \end{equation}
      Choose $\gamma=\frac{s}{N\mu}$, then $\gamma^2N\mu=\gamma s$. Replace $\gamma$ by $\gamma^2$ and using equation\eqref{4.40} we obtained that
      \begin{equation*}
         \|w_n\|_{L^{\gamma^2 s}(\mathbb{R}^N)}\leq(\tilde{C})^{\frac{1}{{N\gamma^2}}}\gamma^{\frac{1}{\gamma^2}}\|w_n\|_{L^{\gamma s}(\mathbb{R}^N)}^{\mu}\leq(\tilde{C})^{\frac{1}{N}(\frac{1}{\gamma}+\frac{1}{\gamma^2})}\gamma^{(\frac{1}{\gamma}+\frac{1}{\gamma^2})}\|w_n\|_{L^{N\mu\gamma}(\mathbb{R}^N)}^{\mu} . 
      \end{equation*}
      On repeating the same arguments $k$ times, we obtain
     \begin{equation*}
        \|w_n\|_{L^{\gamma^k s}(\mathbb{R}^N)}\leq(\tilde{C})^{\frac{1}{N}\sum_{j=1}^k(\frac{1}{\gamma^j})}\gamma^{\sum_{j=1}^k(\frac{1}{\gamma^j})}\|w_n\|_{L^{N\mu\gamma}(\mathbb{R}^N)}^{\mu}   .
     \end{equation*}
     As $k\rightarrow\infty$ in above equation, for any $n\in\mathbb{N}$, we get $ \|w_n\|_{L^\infty(\mathbb{R}^N)}\leq K.$
     for some constant $K>0.$
     Now we will show that $$\underset{|x|\rightarrow+\infty}{{\lim}}w_n(x)=0,~\text{uniformly in } n\in\mathbb{N.}$$
     Due to embedding $W_{V_{\epsilon}}^{1,p}(\mathbb{R}^N)\cap W_{V_{\epsilon}}^{1,N}(\mathbb{R}^N)\hookrightarrow W_{V_{0}}^{1,p}(\mathbb{R}^N)\cap W_{V_{0}}^{1,N}(\mathbb{R}^N)$, we deduce that $\{w_n\}_{n\geq1}$ is bounded in $\boldsymbol{Y}$. Moreover,
     $w_n\rightarrow w~\text{in~}~L^{t}(\mathbb{R}^N),~\forall~t\in[p,p*]\cup [N,+\infty) $ and $w_n\rightarrow w$ a.e. in $\mathbb{R}^N.$ It can be observed that $w_n$ solves 
     \begin{equation}\label{4.26}
         -\Delta_pw_n-\Delta_Nw_n+V_0(w_n^{p-1}+w_n^{N-1})\leq\tau w_n^{N-1}+C(\tau)w_n^{\nu-1}\Phi_{\alpha_0,N-2}(w_n)
     \end{equation}
     in the weak sense. Using the arguments used in \cite{Liu-1969} it can be proved that $$\tau w_n^{N-1}+C(\tau)w_n^{\nu-1}\Phi_{\alpha_0,N-2}(w_n)\in L^{\frac{N}{N-1}}(\mathbb{R}^N)\subset Y^* $$ where $\boldsymbol{Y}^*$ denotes the dual space of $\boldsymbol{Y}$. Define the operator $T: \boldsymbol{Y}\rightarrow\boldsymbol{Y}^*$
     \begin{equation}
         \langle T(u),v\rangle=\sum_{s\in\{p,N\}}\underset{\mathbb{R}^N}{\int}|\nabla u|^{s-2}\nabla u\cdot\nabla v+ V_0|u|^{s-2}uv\dx
         \end{equation}
         for all $u,v\in\boldsymbol{Y}$. By following the arguments used in \cite{D.K.-2025}, we can show that $T$ is surjective. Due to which we have  $z_n\in\boldsymbol{Y}$ such that $T(z_n)=\tau w_n^{N-1}+C(\tau)w_n^{\nu-1}\Phi_{\alpha_0,N-2}(w_n)$. This means for all $v\in\boldsymbol{Y}$
         \begin{equation}\label{4.28}
            \sum_{s\in\{p,N\}}\underset{\mathbb{R}^N}{\int}|\nabla z_n|^{s-2}\nabla z_n\cdot\nabla v+ V_0|z_n|^{s-2}z_nv\dx=\underset{\mathbb{R}^N}{\int}\left(\tau w_n^{N-1}+C(\tau)w_n^{\nu-1}\Phi_{\alpha_0,N-2}(w_n)\right)v\dx.
         \end{equation}
    Assume $v=z_n^{-}$ as a test function in equation \eqref{4.28}, then we obtain $z_n^{-}=0$ a.e. in $\mathbb{R}^N$. Hence $z_n\geq0$ a.e. in $\mathbb{R}^N.$ On using equations \eqref{4.26}, \eqref{4.28} and \cite[Theorem 4.1]{Brasco-2022}, we conclude that 
    \begin{equation}\label{4.29}
        0\leq w_n\leq z_n ~\text{a.e. in}~ \mathbb{R}^N.
    \end{equation}
    By using the Young's inequality (i.e. for any $\vartheta>0$ and $a,b\geq 0$ with $\frac{1}{s}+\frac{1}{s'}=1$,
    $$ab\leq \vartheta a^s+C(\vartheta)b^{s'}$$
    Where $C$ is a constant depend on $\vartheta$), H\"older's inequality and Lemma \ref{lemma2.2} we can obtained the boundedness of $\{z_n\}_{n\geq1}$ in $\boldsymbol{Y}$. Consequently, up to a subsequence still denoted by itself 
    \begin{equation*}
        z_n\rightharpoonup z ~\text{in}~\boldsymbol{Y}~ \text{and}~z_n\rightarrow z~\text{a.e. in}~\mathbb{R}^N.
    \end{equation*}
    So,
    \begin{equation}\label{4.30}
      \|z\|_{W_{V_{0}}^{1,p}(\mathbb{R}^N)}^p+\|z\|_{W_{V_{0}}^{1,N}(\mathbb{R}^N)}^N=\underset{\mathbb{R}^N}{\int}\left(\tau w^{N-1}+C(\tau)w^{\nu-1}\Phi_{\alpha_0,N-2}(w)\right)z\dx.  
    \end{equation}
    Next we will show that $\underset{n\rightarrow+\infty}{\text{lim}}\underset{\mathbb{R}^N}{\int} w_n^{N-1}z_n\dx=\underset{\mathbb{R}^N}{\int} w^{N-1}z\dx$.\\
    Since $w^{N-1}\in L^{\frac{N}{N-1}}(\mathbb{R}^N)$ and using the weak convergence of $z_n$ in $L^{N}(\mathbb{R}^N)$, we have
    \begin{equation}\label{4.31 b}
    \underset{n\rightarrow+\infty}{\text{lim}}\underset{\mathbb{R}^N}{\int} w^{N-1}(z_n-z)\dx=0.
    \end{equation}
    On the other side, due to $w_n\rightarrow w$ in $L^N(\mathbb{R}^N)$ and Lebesgue dominated convergence theorem, we have 
    $$\underset{n\rightarrow+\infty}{\text{lim}}\underset{\mathbb{R}^N}{\int}| w_n^{N-1}-w^{N-1}|^\frac{N}{N-1}\dx=0.$$
    Now
    
    \begin{equation*}
    \begin{split}
      \underset{\mathbb{R}^N}{\int} (w_n^{N-1}z_n-w^{N-1} z)\dx=\underset{\mathbb{R}^N}{\int} (w_n^{N-1}z_n-w^{N-1}z_n+w^{N-1}z_n-w^{N-1} z)\dx\\
      =\underset{\mathbb{R}^N}{\int} (w_n^{N-1}z_n-w^{N-1}z_n)\dx+\underset{\mathbb{R}^N}{\int}(w^{N-1}z_n-w^{N-1} z)\dx .
      \end{split}
    \end{equation*}
    Using H\"older's inequality and boundedness of $\{z_n\}$ in $\boldsymbol{Y}$, we have 
    \begin{equation}\label{4.32-1}
        \underset{n\rightarrow+\infty}{\text{lim}}\underset{\mathbb{R}^N}{\int}(w_n^{N-1}z_n-w^{N-1}z_n)\dx=0. 
    \end{equation}
    From equations \eqref{4.31 b} and \eqref{4.32-1} it follows that
    \begin{equation}\label{4.33a}
    \underset{n\rightarrow+\infty}{\text{lim}}\underset{\mathbb{R}^N}{\int} w_n^{N-1}z_ndx=\underset{\mathbb{R}^N}{\int} w^{N-1}z\dx.
    \end{equation}
     Similarly, it follows that
     \begin{equation}\label{4.33b}
      \underset{n\rightarrow+\infty}{\text{lim}}\underset{\mathbb{R}^N}{\int} w_n^{N-1}z_n\Phi _{\alpha_0,N-2}(w_n)\dx=\underset{\mathbb{R}^N}{\int} w^{N-1}z\Phi _{\alpha_0,N-2}(w)\dx.
      \end{equation}
      By virtue of equations \eqref{4.33a} and \eqref{4.33b}
      $$ \|z_n\|_{W_{V_0}^{1,p}(\mathbb{R}^N)}^p+\|z_n\|_{W_{V_0}^{1,N}(\mathbb{R}^N)}^N=\|z\|_{W_{V_0}^{1,p}(\mathbb{R}^N)}^p+\|z\|_{W_{V_0}^{1,N}(\mathbb{R}^N)}^N+o_n(1)$$
      as $n\rightarrow+\infty.$
      From Br\'ezis-Lieb lemma \cite{Brezis-1983}, it follows that $z_n\rightarrow z$ in $\boldsymbol{Y}$. Since $0\leq w_n\leq z_n$ a.e. in $\mathbb{R}^N$, again using Moser iteration argument as in the first part of this lemma, we have $$\|z_n\|_{L^{\infty}(\mathbb{R}^N)}\leq C$$ for some constant $C>0.$
      Hence from \cite{Tong-2025} and \eqref{4.29}, it follows that $$\underset{|x|\rightarrow+\infty}{{\lim}}w_n(x)=0,~\text{uniformly in } n\in\mathbb{N.}$$ This completes the proof. 
      \end{proof}
      \section{Proof of Main Theorem}\label{Section5} Now we are empowered enough to conclude the proof of the main theorem of this article.
  \begin{proof}[Proof of Theorem \ref{main theorem 1}] 
  We will prove that there exists $\tilde{\epsilon}$, $\beta>0$ such that for every $\epsilon\in(0,\tilde{\epsilon})$, $\beta\in(0,\hat{\beta})$ and every solution $u_{\epsilon,\beta}$ of problem \eqref{Auxiliary problem} satisfies
  \begin{equation}\label{4.42}
      \|u_{\epsilon,\beta}\|_{L^{\infty}(\Lambda_{\epsilon}^c)}<a.
  \end{equation}
   On contrary suppose that \eqref{4.42} does not hold, i.e., for some subsequence $\epsilon_n, \beta_n\rightarrow0$ we have $\{u_n\}(:=\{u_{\epsilon_n,\beta_n}\})$  such that $I_{\epsilon_n,\beta_n}(u_n)=c_{\epsilon_n,\beta_n}$, $0\in\partial I_{\epsilon_n,\beta_n}(u_n)$ and 
   \begin{equation}\label{4.43}
       \|u_{\epsilon,\beta}\|_{L^{\infty}(\Lambda_{\epsilon}^c)}\geq a.
   \end{equation}
   From Lemma \ref{lemma4.3}, we have $(y_n)\subset\mathbb{R}^N$ such that $w_n(x)=u_n(x+y_n)\rightarrow w$ in $\boldsymbol{Y}$ and $\epsilon_ny_n\rightarrow y_0$ with $V(y_0)=V_0.$ Set $r>0$ such that $B_r(y_0)\subset B_{2r}(y_0)\subset\Lambda$ and hence 
   $$ B_{\frac{r}{\epsilon_n}}\left(\frac{y
   _0}{\epsilon_n}\right)\subset \Lambda_{\epsilon_n}.$$
   So, for all $y\in B_{\frac{r}{\epsilon_n}}\left(y_n\right),$
   $$\left|y-\frac{y_0}{\epsilon_n}\right|\leq \left|y-y_n\right|+\left|y_n-\frac{y_0}{\epsilon_n}\right|<\frac{2r}{\epsilon_n}$$
   for large $n$ and in that case, there holds
   \begin{equation}\label{4.44}
       \Lambda_{\epsilon_n}^c\subset B_{\frac{r}{\epsilon_n}}^c(y_n).
   \end{equation}
   By Lemma \ref{lemma4.4}, we have 
   $$w_n(x)\rightarrow0~\text{as}~|x|\rightarrow+\infty~\text{uniformly   in }  n.$$
  Hence there is $\mathcal{R}>0$ such that $$|w_n(x)|<a,~\forall~|x|\geq\mathcal{R},~n\in \mathbb{N.}$$
  So $u_n(x)<a$ for $x\in B_{\mathcal{R}}^c(y_n)$ and from \eqref{4.44} there exist $n_0\in\mathbb{N}$ such that 
  $$\Lambda_{\epsilon_n}^c\subset B_{\frac{r}{\epsilon_n}}^c(y_n)\subset B_{\mathcal{R}}^c(y_n),~\forall~n\geq n_0. $$
  This means $u_n(x)<a$ for all $x\in\Lambda_{\epsilon_n}^c$ and $n\geq n_0$, which contradicts \eqref{4.43}. Hence, our claim holds. Since $u_{\epsilon,\beta}$ is solution for problem \eqref{Auxiliary problem} and satisfies
  \begin{equation}\label{4.45}
      \|u_{\epsilon,\beta}\|_{L^{\infty}(\Lambda_{\epsilon}^c)}<a.
  \end{equation}
   From Remark \ref{remark1}, it follows that $u_{\epsilon,\beta}$ is also a solution for main problem \eqref{main problem} for all $\epsilon\in(0,\tilde{\epsilon})$, and  $\beta\in(0,\hat{\beta}).$ Next, we will explore the behaviour of the maximum points of $u_{\epsilon,\beta}$ for small enough $\epsilon$ and $\beta.$ From assumption \ref{g3}, one can conclude that there is $\tau\in(0,a)$ such that
  \begin{equation}\label{4.46}
  |g(\epsilon x,s)s|\leq\frac{V_0}{2}s^N,~\forall~x\in\mathbb{R}^N~\text{and}~s\leq\tau.
  \end{equation}
  From \eqref{4.45} we have 
  $$ \|u_n\|_{L^{\infty}(B_{\mathcal{R}}^c(y_n))}<\tau. $$
  Also, we can assume that \begin{equation}\label{4.47} \|u_n\|_{L^{\infty}(B_{\mathcal{R}}(y_n))}\geq \tau.
  \end{equation}
  Suppose \eqref{4.47} does not hold. Then, using the fact that $0\in\partial I_{\epsilon_n,\beta_n}(u_n)$ and from \eqref{4.46}, we have 
  \begin{equation*}
      0<\underset{\mathbb{R}^N}{\int}\left(|\nabla u_n|^p+V_0|u_n|^p\right)\dx+\underset{\mathbb{R}^N}{\int} \left(|\nabla u_n|^N+V_0|u_n|^N\right)\dx\leq\frac{V_0}{2}\underset{\mathbb{R}^N}{\int}|u_n|^N\dx
  \end{equation*}
  which leads to $u_n\rightarrow0$ as $n\rightarrow\infty.$ Consequently $I_{\epsilon_n,\beta_n}(u_n)\rightarrow0$; which is not possible. Hence \eqref{4.47} holds. Let $x_n$ be the global maximum point of $u_n$. This means $x_n\in B_{\mathcal{R}}(y_n).$ Set $x_n=y_n+z_n$ for some $z_n\in B_{\mathcal{R}}.$ So $\epsilon_nx_n=\epsilon_ny_n+\epsilon_nz_n\rightarrow y_0$ as $n\rightarrow+\infty.$ Now, using the continuity of the function $V$, we have
      $\underset{n\rightarrow+\infty}{\text{lim}}V(\epsilon_nx_n)=V_0$,
  which concludes the proof.
  \end{proof}
\section*{Contributions}
All authors contributed to the conception and design of the study. All authors contributed to the preparation of the material, data collection, and analysis. The authors read and approved the final manuscript.
\section*{Conflict of Interests} The authors declare that they have no conflict of interest.

\end{document}